\newtheorem{thm}{Theorem}[section]
\newtheorem{lem}[thm]{Lemma}
\newtheorem{cor}[thm]{Corollary}
\newtheorem{conj}[thm]{Conjecture}
\newtheorem{prop}[thm]{Proposition}
\theoremstyle{definition}
\newtheorem{exmp}[thm]{Example}
\newtheorem{defn}[thm]{Definition}
\newtheorem{rem}[thm]{Remark}
\newcommand{\bbb}[1]{\ensuremath{\mathbb{#1}}}
\newcommand{\D}{\bbb{D}}
\newcommand{\euc}{\bbb{E}}
\newcommand{\R}{\bbb{R}}
\newcommand{\sph}{\bbb{S}}
\newcommand{\Z}{\bbb{Z}}
\newcommand{\cayley}{\textsc{Cay}}
\newcommand{\cat}{\textsc{CAT}}
\newcommand{\E}{\ensuremath{\mathcal{E}}}
\newcommand{\G}{\ensuremath{\mathcal{G}}}
\newcommand{\pe}{\textsf{PE}\xspace}
\newcommand{\ps}{\textsf{PS}\xspace}
\newcommand{\size}[1]{\ensuremath{\vert #1 \vert}}
\begin{document}
%%%%%%%%%%%%%%%%

\title[$\triangle$'s, $\square$'s and geodesics]{Triangles, squares
  and geodesics}

\author[R.~Levitt]{Rena Levitt}
\address{Dept. of Math.\\ 
  Pomona College
  Claremont, CA 91711} 
\email{rena.levitt@pomona.edu}

\author[J.~McCammond]{Jon McCammond} 
\address{Dept. of Math.\\ 
  U. C. Santa Barbara\\ 
  Santa Barbara, CA 93106} 
\email{jon.mccammond@math.ucsb.edu}

\keywords{nonpositive curvature, CAT(0), biautomaticity, decidability}
\subjclass[2000]{20F65,20F67}
\date{\today}

\begin{abstract}
  In the early 1990s Steve Gersten and Hamish Short proved that
  compact nonpositively curved triangle complexes have biautomatic
  fundamental groups and that compact nonpositively curved square
  complexes have biautomatic fundamental groups.  In this article we
  report on the extent to which results such as these extend to
  nonpositively curved complexes built out a mixture of triangles and
  squares.  Since both results by Gersten and Short have been
  generalized to higher dimensions, this can be viewed as a first step
  towards unifying Januszkiewicz and {\'S}wi{\.a}tkowski's theory of
  simplicial nonpositive curvature with the theory of nonpositively
  curved cube complexes.
\end{abstract}
\maketitle

%%%%%%%%%%%%%%%%%%%%%%%%
\section{Introduction}
%%%%%%%%%%%%%%%%%%%%%%%%

Many concepts in geometric group theory, including hyperbolic groups,
$\cat(0)$ groups, and biautomatic groups, were developed to capture
the geometric and computational properties of examples such as
negatively curved closed Riemannian manifolds and closed topological
$3$-manifolds.  While the geometric and computational aspects of
hyperbolic groups are closely interconnected, the relationship between
the geometrically defined class of $\cat(0)$ groups and the
computationally defined class of biautomatic groups is much less
clear.  Examples of biautomatic groups that are not $\cat(0)$ groups
are known and there is an example of a $2$-dimensional piecewise
Euclidean $\cat(0)$ group that is conjectured to be neither automatic
nor biautomatic \cite{El-thesis}.  Nevertheless, several specific
classes of $\cat(0)$ groups are known to be biautomatic, beginning
with the following results by Gersten and Short \cite{GeSh90}.

\begin{thm}[Gersten-Short]\label{thm:gs} 
  Every compact, nonpositively curved triangle complex has a
  biautomatic fundamental group.  Similarly, every compact,
  nonpositively curved square complex has a biautomatic fundamental
  group.
\end{thm}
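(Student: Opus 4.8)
The plan is to exhibit, for $G=\pi_1(X)$ with a suitable finite generating set, a regular language $L$ that maps onto $G$ and has the (synchronous, two-sided) fellow traveler property; by the standard criterion for synchronous biautomatic structures this suffices. First I would pass to the universal cover $\widetilde X$, which is a complete $\cat(0)$ space since $X$ is compact and nonpositively curved, and on which $G$ acts properly, cocompactly and by cellular isometries. Fix a base vertex $v_{0}$ and take as generators the finitely many $G$-orbits of edges of $\widetilde X$ incident to $G\cdot v_{0}$; words over this alphabet then correspond to edge-paths in $\widetilde X^{(1)}$ issuing from $v_{0}$, and evaluation records the $G$-translate of $v_{0}$ at the far endpoint. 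The problem reduces to choosing a $G$-equivariant family of edge-paths $\sigma_{v}$ from $v_{0}$ to each vertex $v\in G\cdot v_{0}$ whose associated language is regular and whose members $k$-fellow-travel whenever their endpoints lie within distance $1$, and symmetrically under left translation.

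The geometric input is the theory of reduced combinatorial disc diagrams over $\widetilde X$ together with the combinatorial Gauss--Bonnet theorem. Given two edge-paths with the same, or with $1$-separated, endpoints I would fill the resulting loop by a reduced disc diagram $D\to\widetilde X$. Nonpositive curvature of $\widetilde X$ means every interior vertex of $D$ has link of girth at least $2\pi$ --- the girth-$\ge 4$ link condition in the square case, the analogous angle condition in the triangle case --- so interior vertices carry nonpositive combinatorial curvature, and Gauss--Bonnet forces a total curvature of at least $2\pi$ onto $\partial D$. Analysing how this positive boundary curvature can be distributed, using that the cells of $D$ are squares (resp.\ triangles) and that long stretches of $\partial D$ coming from efficient paths cannot absorb much of it, bounds the width of such diagrams; compactness of $X$ then makes all the local configurations, curvature bounds and widths uniform over $G$. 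This is the estimate that ultimately delivers the fellow traveler property once the combing has been chosen correctly.

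For the combing itself the two cases diverge, and this is where the real work lies. In the square-complex case the efficient route is to use the hyperplane/wall structure of $\cat(0)$ square complexes and let $\sigma_{v}$ be the canonical (``normal'') combinatorial path to $v$: at each vertex cross the maximal square all of whose walls separate the current vertex from $v$, and iterate. The walls crossed are exactly those separating $v_{0}$ from $v$, which makes the family $G$-equivariant and, via the Gauss--Bonnet width bound, fellow-travelling; regularity holds because the next step of a normal path is determined by bounded local wall data, of which there are finitely many types by compactness. In the triangle-complex case there are no walls, so I would instead take a greedily defined canonical edge-path shadowing the (unique) $\cat(0)$ geodesic $[v_{0},v]$ --- for instance the ``leftmost'' taut approximation with respect to a fixed $G$-invariant ordering of the vertex links --- and prove by a diagram argument that it is locally determined, hence regular, and that it fellow-travels. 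Upgrading automaticity to biautomaticity is then a matter of arranging the symmetric left-translation fellow traveler estimate, obtained by rerunning the diagram argument with the roles of the two endpoints exchanged (and, if necessary, closing $L$ under the appropriate involution).

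I expect the main obstacle to be regularity rather than the fellow traveler property: once $L$ is the correct language, fellow-travelling falls out of the Gauss--Bonnet width bound, but showing that membership in $L$ is recognised by a finite automaton requires knowing that the data certifying ``this is the canonical path'' is uniformly bounded --- a statement that blends compactness of $X$ with a genuinely curvature-theoretic ``no-fattening'' phenomenon. Note that one cannot simply take the set of all geodesic words: already for the flat torus the geodesic edge-words fan out across a flat and badly fail to fellow-travel, so the canonical, locally defined sublanguage is essential. In the square case the walls package the bounded local data cleanly; I expect the genuinely hard point to be pinning down an honestly local normal form in the triangle-complex case and verifying that this sublanguage, and not the full geodesic language, carries the biautomatic structure.
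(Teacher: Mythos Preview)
The paper does not prove Theorem~\ref{thm:gs}; it is stated in the introduction as a known result of Gersten and Short with a citation to \cite{GeSh90}, and serves purely as background motivation for the paper's investigation of the mixed triangle-square case. There is therefore no proof in the paper against which to compare your proposal.

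That said, your sketch is broadly in the right spirit and is compatible with the framework the paper later develops for the mixed case. Sections~\ref{sec:geos}--\ref{sec:gs-geos} construct a canonical family of geodesics (the ``Gersten--Short geodesics'' of Definition~\ref{def:gs-geos}) via choke points in intervals, and the paper remarks explicitly that in the pure triangle and pure square cases these reduce to the paths Gersten and Short originally chose. Your square-complex route via hyperplanes is closer to the later Niblo--Reeves treatment of cube complexes than to the original 1990 argument, which worked directly with disc diagrams and a lexicographic normal form, but both approaches are valid. Your triangle-complex sketch---a leftmost combing with regularity coming from local determination---is essentially the original strategy. Your diagnosis that regularity is the delicate point, and that the full geodesic language fails already in a flat torus, is correct and matches the paper's emphasis on selecting a canonical sublanguage rather than all geodesics.
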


We conjecture that this result extends to nonpositively curved
complexes built out a mixture of triangles and squares.

\begin{conj}\label{conj:biaut}
  The fundamental group of a compact nonpositively curved
  triangle-square complex is biautomatic.
\end{conj}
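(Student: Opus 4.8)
The plan is to follow the Gersten--Short template behind Theorem~\ref{thm:gs}. Pass to the universal cover $\tilde X$, a $\cat(0)$ triangle-square complex on which $G=\pi_1(X)$ acts properly and cocompactly, fix a base vertex $v_0$, and identify $G$ (up to quasi-isometry) with the vertex set of $\tilde X$ equipped with the combinatorial path metric on the $1$-skeleton $\tilde X^{(1)}$. It then suffices to produce a $G$-equivariant family $\mathcal{P}$ of directed edge-paths in $\tilde X^{(1)}$ such that (i) for every ordered pair of vertices $(v,w)$ there is exactly one path of $\mathcal{P}$ from $v$ to $w$; (ii) the language of generator strings read along the paths of $\mathcal{P}$ issuing from $v_0$ is regular; and (iii) two paths of $\mathcal{P}$ with a common initial vertex and with terminal vertices at distance at most $1$ synchronously $k$-fellow-travel for a uniform $k$. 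Equivariance promotes the one-sided fellow traveller property in (iii) to the two-sided version needed for biautomaticity, and (i) supplies uniqueness of normal forms.

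The crux is the definition of $\mathcal{P}$, and the right definition should restrict to the normal cube paths of Niblo--Reeves on full square subcomplexes and to the directed-geodesic normal forms of Januszkiewicz--{\'S}wi{\.a}tkowski on full triangle subcomplexes. I would build the preferred path from $v$ to $w$ one edge at a time: form the subcomplex $R(v,w)$ of the star of $v$ determined by the edges at $v$ that point toward $w$ --- made precise either by ``decreases combinatorial distance to $w$'' or, more robustly, by ``lies in the $\cat(0)$ convex hull of $\{v\}\cup B(w,1)$'' --- then take the $\cat(0)$ nearest-point projection of $w$ onto $R(v,w)$ and step toward the cell of $R(v,w)$ carrying that projection; iterate from the new vertex. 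Termination, and the fact that each choice is forced, should follow from convexity of the $\cat(0)$ metric together with a combinatorial Gauss--Bonnet/disk-diagram estimate controlling the possible links of vertices where triangle corners (angle $\pi/3$) and square corners (angle $\pi/2$) are mixed, i.e. the link condition $2a+3b\ge 12$ on link cycles with $a$ triangle corners and $b$ square corners.

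For regularity I would exploit cocompactness: $\tilde X$ has only finitely many isomorphism types of combinatorial balls of any fixed radius, so the rule ``given the local picture at the current vertex and the local data recording where $w$ sits, output the next edge'' is computable by a finite-state machine, provided the ``where $w$ sits'' data is itself finite-state and updates predictably edge-by-edge. In a cube complex that data is the finite set of hyperplanes separating the current vertex from $w$; triangle complexes have no hyperplanes, so I would instead carry the Januszkiewicz--{\'S}wi{\.a}tkowski convex hulls of balls about $w$ and argue that, along a preferred path, the relevant hull changes only by a bounded local move. Reconciling these two bookkeeping devices across the triangle/square interface is the delicate point in establishing regularity.

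The fellow traveller property (iii) is where I expect the genuine obstruction. Since a $\cat(0)$ triangle-square complex is still $\cat(0)$, true $\cat(0)$ geodesics between nearby points still fellow-travel, so the task reduces to (a) showing that the combinatorial preferred path from $v$ to $w$ stays within bounded Hausdorff distance of the $\cat(0)$ geodesic $[v,w]$, and (b) transferring (a) to a bound between two preferred paths with nearby endpoints. Step (a) is the heart: in the pure square case it follows from the ``staircase'' rigidity of normal cube paths and in the pure triangle case from a short local argument about directed geodesics, but in the mixed case a preferred path can a priori oscillate as the geodesic grazes the frontier between a square region and a triangle region. Controlling that oscillation appears to require a new combinatorial convexity lemma for triangle-square complexes --- a uniform bound on how far the residue projection can move when the target is changed by a single edge. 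I expect that without such a lemma the conjecture can only be established under an extra hypothesis restricting how triangles and squares are permitted to share vertices, and that pinning down the exact hypothesis under which the oscillation is bounded --- or producing a counterexample to biautomaticity when it is not --- is the main difficulty hiding behind Conjecture~\ref{conj:biaut}.
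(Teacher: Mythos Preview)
The statement is a \emph{conjecture}; the paper does not prove it and explicitly leaves it open. Section~\ref{sec:biaut} outlines what would be needed, and the intermediate sections establish partial results and identify concrete obstacles. Your proposal is likewise not a proof: you acknowledge in your final paragraph that the fellow-traveller step requires ``a new combinatorial convexity lemma'' you do not supply, and you end by suggesting an extra hypothesis may be needed. So neither you nor the paper has a proof, and the honest comparison is between two programmes.

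The approaches differ in how the preferred paths are chosen. The paper defines \emph{Gersten--Short geodesics} purely combinatorially (Definition~\ref{def:gs-geos}): from $u$ one identifies the unique first move in the interval $K[u,v]$ via Theorem~\ref{thm:first-move}, steps to the next choke point, and iterates. This is a discrete, link-based construction that provably specializes to the original Gersten--Short paths in the pure cases. Your construction instead uses the $\cat(0)$ nearest-point projection of $w$ onto a local residue $R(v,w)$; this is more metric than combinatorial, and you have not verified that it reduces to the known normal forms in the pure cases, nor that the projected cell always contains a vertex adjacent to $v$.

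More importantly, the paper pins down the obstruction you only gesture at. Your worry that a preferred path ``can a priori oscillate as the geodesic grazes the frontier between a square region and a triangle region'' is made precise in Proposition~\ref{prop:ft-radial}: there is an explicit radial flat in which the paper's Gersten--Short geodesics with endpoints one edge apart fail to $k$-fellow-travel for every $k$. The paper then observes that such flats are intrinsically aperiodic (Proposition~\ref{prop:aperiodic-flats}), conjectures that they cannot immerse into a compact nonpositively curved triangle-square complex (Conjecture~\ref{conj:aperiodic}), and proves the fellow-traveller property inside every potentially periodic flat (Theorem~\ref{thm:ft-periodic}) via an embedding of crumpled flats into $\E\times\E$. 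None of this structure is visible in your outline; in particular your hoped-for ``bound on how far the residue projection can move when the target is changed by a single edge'' is false in general flats, and the real question is whether compactness of the base rules out the bad flats.
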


As progress towards answering Conjecture~\ref{conj:biaut} in the
affirmative, we establish that for every compact nonpositively curved
triangle-square complex $K$ there exists a canonically defined
language of geodesics that reduces to the regular languages used by
Gersten and Short when $K$ is a triangle complex or a square complex.
On the other hand, by investigating the canonical language of
geodesics within a single flat plane, we highlight one reason why the
``mixed'' case studied here is significantly more difficult than the
``pure'' cases analyzed by Gersten and Short.

The article is structured as follows. The early sections review the
necessary results about piecewise Euclidean complexes, nonpositively
curved spaces, biautomatic groups, and disc diagrams.
Sections~\ref{sec:geos} and~\ref{sec:intervals} contain general
results about combinatorial geodesics in $\cat(0)$ triangle-square
complexes and Section~\ref{sec:gs-geos} constructs the collection of
canonical geodesic paths alluded to above.  Sections~\ref{sec:flats},
\ref{sec:flat-geos} and \ref{sec:periodic-flats} investigate the
behavior of these geodesics in a single triangle-square flat.
Finally, Section~\ref{sec:biaut} outlines the remaining steps needed
to establish Conjecture~\ref{conj:biaut}.

%%%%%%%%%%%%%%%%%%%%%%%%%%%%%%%%%%%%%%%%%%%%%%%%%%
\section{Nonpositive curvature}\label{sec:npc}
%%%%%%%%%%%%%%%%%%%%%%%%%%%%%%%%%%%%%%%%%%%%%%%%%%

We begin by reviewing the theory of nonpositively curved metric spaces
built out of Euclidean polytopes.  Although most of the article
focuses on $2$-dimensional complexes where simplified definitions are
available, the general definitions are given since higher dimensions
occasionally occur.  See \cite{BrHa99} for additional details.

\begin{defn}[Euclidean polytopes]
  A \emph{Euclidean polytope} $P$ is the convex hull of a finite set
  of points in a Euclidean space, or, equivalently, it is a bounded
  intersection of a finite number of closed half-spaces.  A
  \emph{proper face} of $P$ is a nonempty subset of $P$ that lies in
  the boundary of a closed half-space containing $P$.  It turns out
  that every proper face of a polytope is itself a polytope.  There
  are also two trivial faces: the empty face $\emptyset$ and $P$
  itself.  The \emph{interior} of a face $F$ is the collection of its
  points that do not belong to a proper subface, and every polytope is
  a disjoint union of the interiors of its faces.  The
  \emph{dimension} of a face $F$ is the dimension of the smallest
  affine subspace that containing $F$.  A $0$-dimensional face is a
  \emph{vertex} and a $1$-dimensional face is an \emph{edge}. A
  $2$-dimensional polytope is a \emph{polygon}, and an \emph{$n$-gon}
  if it has $n$ vertices.
\end{defn}

\begin{defn}[\pe complexes]
  A \emph{piecewise Euclidean complex} (or \emph{\pe complex}) is the
  regular cell complex that results when a disjoint union of Euclidean
  polytopes are glued together via isometric identifications of their
  faces.  A basic result due to Bridson is that so long as there are
  only finitely many isometry types of polytopes used in the
  construction, the result is a geodesic metric space, i.e. the
  distance between two points in the quotient is well-defined and
  achieved by a path of that length connecting them.  This was a key
  result from Bridson's thesis \cite{Br91} and is the main theorem of
  Chapter I.7 in \cite{BrHa99}.
\end{defn}

The examples under investigation are special types of \pe
$2$-complexes.

\begin{exmp}[Triangle-square complexes]
  A \emph{triangle complex} is a \pe complex where every polytope used
  in its construction is isometric to an equilateral triangle with
  unit length sides.  Note that these complexes are not necessarily
  simplicial since two edges of the same triangle can be identified.
  They agree instead with Hatcher's notion of a $\Delta$-complex
  \cite{Ha02}.  Similarly, a \emph{square complex} is a \pe complex
  where every polytope used is a unit square and a
  \emph{triangle-square complex} is one where every polytope used is
  either a unit equilateral triangle or a unit square.
\end{exmp}

Euclidean polytopes and \pe complexes have spherical analogs that are
needed for our characterization of nonpositive curvature.

\begin{defn}[Spherical polytopes]
  A \emph{spherical polytope} is an intersection of a finite number of
  closed hemispheres in $\sph^n$, or, equivalently, the convex hull of
  a finite set of points in $\sph^n$.  In both cases there is an
  additional requirement that the intersection or convex hull be
  contained in some open hemisphere of $\sph^n$.  This avoids
  antipodal points and the non-uniqueness of geodesics connecting
  them.  With closed hemispheres replacing closed half-spaces and
  lower dimensional unit subspheres replacing affine subspaces, the
  other definitions are unchanged.
\end{defn}

\begin{defn}[\ps complexes]
  A \emph{piecewise spherical complex} (or \emph{\ps complex}) is the
  regular cell complex that results when a disjoint union of spherical
  polytopes are glued together via isometric identifications of their
  faces.  As above, so long as there are only finitely many isometry
  types of polytopes used in the construction, the result is geodesic
  metric space.
\end{defn}

\begin{defn}[Links]
  Let $F$ be a face of a Euclidean polytope $P$.  The \emph{link of
    $F$ in $P$} is the spherical polytope of unit vectors $u$
  perpendicular to the affine hull of $F$ and pointing towards $P$ (in
  the sense that for any point $x$ in the interior of $F$ there is a
  sufficiently small positive value of $\epsilon$ such that
  $x+\epsilon u$ lies in $P$).  For example, if $P$ is a polygon, and
  $F$ is one of its vertices, the link of $F$ in $P$ is a circular arc
  whose length is the radian measure of the interior angle at $F$, and
  if $F$ is an edge of $P$ then the link of $F$ in $P$ is a single
  point representing one of the two unit vectors perpendicular to the
  affine hull of $F$.  Finally, when $K$ is a \pe complex and $F$ is
  one of its cells, the \emph{link of $F$ in $K$} is the \ps complex
  obtained by gluing together the link of $F$ in each of the various
  polytopal cells of $K$ containing it in the obvious way.  When $F$
  is a vertex, its link is a \ps complex isometric (after rescaling)
  with the subspace of points distance $\epsilon$ from $F$.  A vertex
  link in a triangle-square complex is a metric graph in which every
  edge either has length $\frac{\pi}{3}$ or $\frac{\pi}{2}$ depending
  on whether it came from a triangle or a square.
\end{defn}

We turn now to curvature conditions.  A \emph{$\cat(0)$ space} is a
geodesic metric space in which every geodesic triangle is ``thinner''
than its comparison triangle in the Euclidean plane and a space is
\emph{nonpositively curved} if every point has a neighborhood which is
$\cat(0)$.  A key consequence of the $\cat(0)$ condition is that every
$\cat(0)$ space is contractible.  For a \pe complex these conditions
are characterized via geodesic loops in its links.

\begin{defn}[Geodesics and geodesic loops]
  A \emph{geodesic} in a metric space is an isometric embedding of a
  metric interval and a \emph{geodesic loop} is an isometric embedding
  of a metric circle.  A \emph{local geodesic} and \emph{local
    geodesic loop} are weaker notions that only require the image
  curves be locally length minimizing.  For example, a path more than
  halfway along the equator of a $2$-sphere is a local geodesic but
  not a geodesic and a loop that travels around the equator twice is a
  local geodesic loop but not a geodesic loop.  A loop of length less
  than $2\pi$ is called \emph{short}.
\end{defn}

\begin{prop}[Nonpositive curvature]\label{prop:char-npc}
  A \pe complex with only finitely many isometry types of cells fails
  to be nonpositively curved iff it contains a cell whose link
  contains a short local geodesic loop, which is true iff it contains
  a cell whose link contains a short geodesic loop.
\end{prop}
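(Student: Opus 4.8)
The plan is to deduce the proposition from two facts about piecewise spherical complexes recorded in \cite[Chapter II.5]{BrHa99}. The first is Gromov's link condition: a \pe (respectively \ps) complex with only finitely many isometry types of cells is locally \cat(0) (respectively locally \cat(1)) if and only if the link of each of its vertices is a \cat(1) space. Since the link of a cell $F$ of a \pe complex $K$ occurs as the link of a face inside the link of any vertex of $F$ --- ``links of links are links'' --- a routine induction on dimension shows that the links of all cells of $K$ are \cat(1) exactly when the links of all vertices of $K$ are, and by the link condition this is precisely the condition that $K$ be nonpositively curved. The second fact is the companion characterization of the global \cat(1) condition: a \ps complex with finitely many shapes all of whose vertex links are \cat(1) is itself \cat(1) if and only if it contains no isometrically embedded circle of length less than $2\pi$.

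Granting these, I would argue the cycle of implications (some cell link contains a short geodesic loop) $\Rightarrow$ (some cell link contains a short local geodesic loop) $\Rightarrow$ ($K$ is not nonpositively curved) $\Rightarrow$ (some cell link contains a short geodesic loop). The first implication is immediate. For the second, it suffices to see that a \ps complex $M$ containing a short local geodesic loop cannot be \cat(1): a local geodesic is locally an isometric embedding, so cutting such a loop of length $\ell<2\pi$ at a point $p$ and its ``antipode'' $q$ produces two \emph{distinct} local geodesics from $p$ to $q$, each of length $\ell/2<\pi$; but in a \cat(1) space a local geodesic of length at most $\pi$ is a geodesic and geodesics between points less than $\pi$ apart are unique \cite[II.1]{BrHa99}, a contradiction. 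Applying this to the cell link containing the loop and then invoking the first paragraph, $K$ is not nonpositively curved.

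For the last implication I would induct on the dimension $n$ of a cell link $M=\mathrm{lk}(F,K)$ that fails to be \cat(1); such an $M$ exists by the link condition when $K$ is not nonpositively curved. When $n=1$ the link $M$ is a metric graph, and a metric graph that is not \cat(1) contains an embedded circle of length less than $2\pi$, which is the desired short geodesic loop. When $n\ge 2$, apply the link condition to $M$: either some vertex link of $M$ fails to be \cat(1), and that link is the link in $K$ of a cell properly containing $F$, whose link has dimension $n-1$, so we finish by induction; or every vertex link of $M$ is \cat(1), in which case the second fact above furnishes an isometrically embedded circle of length less than $2\pi$ inside $M$, and $M$ is itself a cell link of $K$. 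Either way some cell link of $K$ contains a short geodesic loop.

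The one substantial input is the second fact: that a locally \cat(1), finite-shape \ps complex which is not \cat(1) must contain a short \emph{embedded} circle. This is where the real work lies if one does not simply cite it. I would prove it by an Arzel\`a--Ascoli argument carried out inside a compact subcomplex: take a length-minimizing sequence of closed local geodesics of uniformly bounded length; finiteness of shapes gives a positive lower bound on the length of any closed local geodesic, so the infimum is positive and attained by a closed local geodesic $\gamma$; and $\gamma$ can have no self-intersection and can admit no chord shorter than both subarcs it cuts off, since either defect would produce a strictly shorter closed local geodesic. Hence $\gamma$ is an isometrically embedded circle of length less than $2\pi$. For the $2$-dimensional triangle--square complexes that are the paper's real concern all cell links are metric graphs, the induction collapses to the case $n=1$, and this technical point is not needed.
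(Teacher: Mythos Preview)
Your argument is correct, and is really a more explicit unpacking of what the paper does. The paper's treatment of this proposition is two sentences of citation: the first equivalence is attributed to Gromov's link condition, and the second to Bowditch's theorem \cite{Bo95} (with a pointer to \cite{BrMc-ortho} for an exposition). You instead run the explicit induction on the dimension of an offending link, invoking the Bridson--Haefliger characterization of \cat(1) (``locally \cat(1) plus no short embedded circle'') in place of citing Bowditch directly. These two formulations are equivalent, so at bottom you are appealing to the same hard input; your version has the advantage of making visible \emph{why} the short geodesic loop may live in a different link than the original short local geodesic loop.

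One remark on your Arzel\`a--Ascoli sketch of the ``second fact'': as written it begins by taking ``a length-minimizing sequence of closed local geodesics of uniformly bounded length,'' but the hypothesis at that point is only that the \ps complex is locally \cat(1) and not globally \cat(1)---you have not yet produced a single short closed local geodesic to minimize over. Extracting one from the failure of \cat(1) (for instance, starting from two distinct geodesics of length less than~$\pi$ between a pair of points and then shortening the resulting digon to a closed local geodesic) is itself the substantive step, and is essentially the content of Bowditch's argument. Once a short closed local geodesic exists, your minimization does show that the shortest one is embedded. Since you already note that this fact can simply be cited from \cite{BrHa99}, and since for the $2$-dimensional triangle--square complexes at issue all links are graphs and the induction collapses to the trivial base case, this is a minor gap in the sketch rather than an error in the proof.
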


The first equivalence is the traditional form of Gromov's link
condition.  The second follows because Brian Bowditch proved in
\cite{Bo95} that when a link of a \pe complex contains a short local
geodesic loop, then some link (possibly a different one) contains a
short geodesic loop.  See \cite{BrMc-ortho} for a detail exposition of
this implication.  We record one corollary for future use.

\begin{cor}\label{cor:char-npc}
  Let $Y \to X$ be a cellular map between \pe $2$-complexes that is an
  immersion away from the $0$-skeleton of $Y$ and an isometry on the
  interior of each cell.  If $X$ is nonpositively curved then so is
  $Y$.
\end{cor}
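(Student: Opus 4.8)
The plan is to argue by contraposition, using the link characterization of nonpositive curvature in Proposition~\ref{prop:char-npc} to reduce the statement to an elementary fact about metric immersions of metric graphs. First I would note that since $f\colon Y\to X$ restricts to an isometry on the interior of every cell, it sends vertices to vertices, edges to edges and $2$-cells to $2$-cells, and each cell of $Y$ is isometric to the cell of $X$ containing its image; in particular $Y$ has only finitely many isometry types of cells whenever $X$ does, so Proposition~\ref{prop:char-npc} is available for both complexes. Now suppose $Y$ is not nonpositively curved. Then some cell of $Y$ has a link containing a short local geodesic loop, and since in a $2$-complex the link of a $2$-cell is empty and the link of an edge is a discrete set of points, the offending cell must be a vertex $v$. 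Because the metric graph $L=\mathrm{lk}(v,Y)$ is one-dimensional, I would further arrange this loop to be a reduced (cyclically non-backtracking) edge cycle $\gamma$ in $L$ of length less than $2\pi$.

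The next step is to understand the map of metric graphs $\phi\colon L\to\bar L$ induced by $f$, where $\bar v=f(v)$ and $\bar L=\mathrm{lk}(\bar v,X)$: a vertex of $L$ records a direction at $v$ along an edge of $Y$, an edge of $L$ records a corner of a $2$-cell of $Y$ at $v$, and $\phi$ sends each of these to the corresponding object at $\bar v$. The hypothesis that $f$ is an isometry on each $2$-cell interior gives that $\phi$ maps every edge of $L$ isometrically onto an edge of $\bar L$. The hypothesis that $f$ is an immersion away from $Y^{(0)}$, applied in a small neighbourhood of an interior point of an edge $e$ of $Y$, gives that distinct corners of $2$-cells along $e$ have distinct images along $f(e)$, and hence that $\phi$ is locally injective at every vertex of $L$. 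Together these statements say that $\phi$ is a metric immersion of metric graphs.

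To conclude, I would invoke the elementary fact that a metric immersion of metric graphs carries a reduced edge cycle to a reduced edge cycle of the same length — local injectivity at the vertices prevents a backtrack from appearing in the image, and edgewise isometry preserves length — even though the image need not be embedded, which is exactly why the local-geodesic-loop form of Proposition~\ref{prop:char-npc}, rather than its short-geodesic-loop form, is the one to use here. Therefore $\phi\circ\gamma$ is a reduced edge cycle of length less than $2\pi$ in $\bar L$, i.e.\ a short local geodesic loop in the link of the vertex $\bar v$ of $X$, and Proposition~\ref{prop:char-npc}, applied now to $X$, shows that $X$ is not nonpositively curved. This is the contrapositive of the claim.

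The only step calling for real care is the middle one, namely making precise the correspondence between the local structure of $Y$ near the interior of an edge and the edges of a vertex link, and verifying that the immersion hypothesis on $f$ translates exactly into local injectivity of $\phi$ at the vertices of $L$. The reduction of an arbitrary short local geodesic loop in a graph to a reduced edge cycle, and the observation that in a metric graph the reduced edge cycles are precisely the local geodesic loops, are routine but worth recording explicitly; everything else is a direct application of Gromov's link condition.
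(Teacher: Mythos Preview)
Your argument is correct and follows essentially the same route as the paper: reduce to vertex links in a $2$-complex, observe that the hypotheses make the induced map on vertex links a metric immersion, and transfer a short local geodesic loop from $Y$ to $X$ via Proposition~\ref{prop:char-npc}. The paper's proof is simply a terser version of yours, asserting the link immersion and the transfer of loops without spelling out the details you provide.
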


\begin{proof}
  First note that in a $2$-complex, the vertex links are the only
  links that can contain short local geodesic loops.  Next, by
  hypothesis, the vertex links of $Y$ immerse into the vertex links of
  $X$ and, as a consequence, if $Y$ contains a vertex $v$ whose link
  contains a short local geodesic loop, then the link of $f(v)$ in $X$
  contains a short local geodesic loop.
  Proposition~\ref{prop:char-npc} completes the proof.
\end{proof}

\begin{prop}[$\cat(0)$]\label{prop:char-cat}
  A \pe complex with only finitely many isometry types of cells is
  $\cat(0)$ iff it is connected, simply connected, and nonpositively
  curved.
\end{prop}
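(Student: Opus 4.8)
The plan is to establish the two implications separately, with essentially all of the work concentrated in the converse direction. Suppose first that $K$ is $\cat(0)$. Then every point of $K$ lies in a $\cat(0)$ neighborhood, namely all of $K$, so $K$ is nonpositively curved directly from the definition. Since $K$ is $\cat(0)$ it is contractible, as recorded above, and a contractible space is in particular connected and simply connected. This disposes of the forward direction.

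For the converse, assume $K$ is connected, simply connected, and nonpositively curved. The key input is the Cartan--Hadamard theorem for metric spaces of nonpositive curvature (see Chapter II.4 of \cite{BrHa99}): a complete geodesic metric space that is connected, simply connected, and locally $\cat(0)$ is globally $\cat(0)$. Three of the four hypotheses are immediate --- connectedness and simple connectivity are assumed, and ``nonpositively curved'' is by definition exactly ``locally $\cat(0)$'' --- so the only thing left to check is completeness. This is precisely where the standing assumption of finitely many isometry types of cells is used: Bridson's theorem, recalled in the definition of a \pe complex above, guarantees that such a complex is a complete geodesic metric space. With completeness in hand the Cartan--Hadamard theorem applies and $K$ is $\cat(0)$.

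The one genuine obstacle is the Cartan--Hadamard step itself, which I would rather quote than reprove. A self-contained argument would follow the standard local-to-global patchwork: given a geodesic triangle in $K$, one first uses local $\cat(0)$-ness together with completeness to see that local geodesics between sufficiently close points are unique and vary continuously with their endpoints, then subdivides the triangle into small subtriangles each contained in a $\cat(0)$ neighborhood, and finally reassembles the Euclidean comparison triangles of the pieces by Alexandrov's gluing lemma to produce a comparison triangle for the whole. The delicate points in that approach are the continuity of local geodesics and the angle bookkeeping when the comparison pieces are glued along their shared geodesic segments; since all of this is treated carefully in \cite{BrHa99}, I would simply cite it rather than reproduce it here.
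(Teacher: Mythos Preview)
Your proposal is correct and follows the same route as the paper, which simply records that the result is a consequence of the Cartan--Hadamard theorem. You are in fact more careful than the paper: you treat the forward direction explicitly and isolate completeness as the one hypothesis needing justification, whereas the paper's one-line remark leaves both of these implicit.
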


This follows from the Cartan-Hadamard Theorem: the universal cover of
a complete, connected, nonpositively curved metric space is $\cat(0)$.

%%%%%%%%%%%%%%%%%%%%%%%%%%%%%%%%%%%%%%%%%%%
\section{Biautomaticity}\label{sec:biaut}
%%%%%%%%%%%%%%%%%%%%%%%%%%%%%%%%%%%%%%%%%%%

Biautomaticity involves distances in Cayley graphs and regular
languages accepted by finite state automata.  The standard reference
is \cite{ECHLPT92}.

\begin{defn}[Paths]
  A \emph{path} in a graph $\Gamma$ can be thought of either as an
  alternating sequence of vertices and incident edges, or as a
  combinatorial map $\alpha:I\to \Gamma$ from a subdivided interval
  $I$.  Every connected graph can be viewed as a metric space by
  making each edge isometric to the unit interval and defining the
  distance between two points to be the length of the shortest path
  connecting them.  In this metric the \emph{length} of a path
  $\alpha$, denoted $\size{\alpha}$, is the number of edges it
  crosses.  Define $\alpha(t)$ to be the start of $\alpha$ when $t$ is
  negative, the point $t$ units from the start of $\alpha$ when $0
  \leq t \leq \size{\alpha}$ and the end of $\alpha$ when $t>
  \size{\alpha}$.
\end{defn}

\begin{defn}[Distance]
  Let $\alpha$ and $\beta$ be two paths in a graph $\Gamma$.  There
  are two natural notions of distance between $\alpha$ and
  $\beta$. The \emph{subspace distance} between $\alpha$ and $\beta$
  is the smallest $k$ such that $\alpha$ (or more specifically its
  image) is contained in a $k$-neighborhood of $\beta$ and $\beta$ is
  contained in a $k$-neighborhood of $\alpha$.  The \emph{path
    distance} between $\alpha$ and $\beta$ is the maximum distance
  between $\alpha(t)$ and $\beta(t)$ as $t$ varies over all real
  numbers.  When the path distance is bounded by $k$, $\alpha$ and
  $\beta$ are said to \emph{synchronously $k$-fellow travel}.
\end{defn}

For later use we note that when geodesics start close together,
bounding their subspace distance also bounds their path distance.

\begin{prop}[Fellow travelling geodesics]\label{prop:ft-geo}
  If $\alpha$ and $\beta$ are geodesic paths in $\Gamma$ that start
  distance $\ell$ apart, and the subspace distance between them is
  $k$, then the path distance between them is at most $2k+\ell$.  In
  particular, they synchronously $(2k+\ell)$-fellow travel.
\end{prop}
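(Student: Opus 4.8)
The plan is to fix a real number $t$ and bound the distance between $\alpha(t)$ and $\beta(t)$ directly, using the fact that both paths are \emph{geodesic} (so distances along each path are faithfully recorded) together with the subspace-distance hypothesis. Set $p = \alpha(t)$ and let $q = \beta(s)$ be a point of $\beta$ realizing the subspace bound, so $d(p,q) \le k$. The key observation is that $s$ cannot be too far from $t$: since $\beta$ is a geodesic, $\size{s - \beta\text{-position of its start relative to }\alpha(0)}$ is controlled. More precisely, I would first estimate $d(\alpha(0), \beta(s))$. On one hand this is at least $|s| - (\text{slack near }t)$; on the other hand, going via $\alpha(t)=p$ and then to $q=\beta(s)$, it is at most $t + k$. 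Comparing these with the hypothesis $d(\alpha(0),\beta(0)) \le \ell$ and the fact that $\beta$ geodesic gives $d(\beta(0),\beta(s)) = |s|$, one gets $|s - t| \le 2k + \ell$ (the constants falling out of two triangle inequalities, one at the start and one near $t$).

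Once $|s-t|$ is bounded, the conclusion follows quickly: because $\beta$ is a geodesic, $d(\beta(s), \beta(t)) = |s - t| \le 2k+\ell$, and hence
\[
d(\alpha(t), \beta(t)) \le d(\alpha(t), \beta(s)) + d(\beta(s), \beta(t)) \le k + (2k+\ell).
\]
This gives a bound of $3k+\ell$, which is weaker than the claimed $2k+\ell$, so the estimates above must be arranged more carefully. The sharper route is to choose $q = \beta(s)$ to be the point of $\beta$ \emph{closest} to $p = \alpha(t)$, estimate $|s - t|$ by comparing the two geodesic ray-lengths $d(\alpha(0),p) = t$ and $d(\beta(0), q) = |s|$ against the chord $d(p,q)\le k$ and the initial gap $\ell$, and then, crucially, observe that the \emph{same} $q$ can be used on the return trip, so that the final triangle inequality reads $d(\alpha(t),\beta(t)) \le d(p,q) + d(q,\beta(t)) = d(p,q) + |s-t|$ where both summands are being squeezed by the \emph{same} geometric data rather than estimated independently. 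Tracking which inequalities are tight shows the total is at most $2k + \ell$.

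The main obstacle is precisely this bookkeeping: naive triangle inequalities overcount $k$, and one has to be disciplined about reusing the single point $q$ and about the sign of $s - t$ (the argument near $t$ and near $0$ pull in possibly opposite directions). I would handle this by splitting into the cases $s \ge t$ and $s < t$ and, in each, writing the one chain of inequalities $|s| = d(\beta(0),\beta(s)) \le d(\beta(0),\alpha(0)) + d(\alpha(0),\alpha(t)) + d(\alpha(t),\beta(s)) \le \ell + t + k$ (and the symmetric lower estimate $t = d(\alpha(0),\alpha(t)) \le \ell + |s| + k$ — wait, using $\beta$ geodesic from its start) to pin $|s - t| \le k + \ell$, after which $d(\alpha(t),\beta(t)) \le k + |s-t| \le 2k + \ell$ as required. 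The final sentence about synchronous fellow travelling is then immediate from the definition of path distance.
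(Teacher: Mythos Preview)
Your final argument is correct and is exactly the paper's proof: pick $\beta(s)$ closest to $\alpha(t)$ so that $d(\alpha(t),\beta(s))\le k$, use the two triangle inequalities through the start vertices to get $|s-t|\le k+\ell$, and then conclude $d(\alpha(t),\beta(t))\le k+(k+\ell)=2k+\ell$. The paper simply writes those three lines directly, without the preliminary false start that gave $3k+\ell$ and without the (unnecessary) case split on the sign of $s-t$.
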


\begin{proof}
  Let $\alpha(t)$ be a point on $\alpha$ and let $\beta(t')$ be the
  closest point on $\beta$.  By hypothesis the distance between
  $\alpha(t)$ and $\beta(t')$ is at most $k$.  Moreover, because
  $\alpha$ and $\beta$ are geodesics, $t \leq t' + k + \ell$ and $t'
  \leq t + k + \ell$.  Thus $\size{t-t'} \leq k+\ell$. As a result,
  $\alpha(t)$ is within $k + (k+\ell)$ of $\beta(t)$.
\end{proof}

\begin{defn}[Cayley graphs]
  Let $K$ be a one-vertex cell complex with fundamental group $G$ and
  let $A$ index its oriented edges.  The oriented loops indexed by $A$
  represent a generating set for $G$ that is closed under involution
  and the $1$-skeleton of the universal cover of $K$ is called the
  \emph{right Cayley graph of $G$ with respect to $A$} and denoted
  $\cayley(G,A)$.  Alternatively, if $G$ is a group and $A$ is
  generating set closed under involution, $\cayley(G,A)$ is a graph
  with vertices indexed by the elements of $G$ and an oriented edge
  connecting $v_g$ and $v_{g'}$ labeled by $a$ whenever $g\cdot a =
  g'$.  Actually, the oriented edges come in pairs and we add only one
  unoriented edge for each such pair with a label associated with each
  orientation.  It is the right Cayley graph because the generators
  multiply on the right.  The second definition shows that the
  structure of $\cayley(G,A)$ is independent of $K$.  There is a
  natural label-preserving left $G$-action on $\cayley(G,A)$ defined,
  depending on the definition, either by deck transformations or by
  sending $g \cdot v_h$ to $v_{g\cdot h}$.
\end{defn}

\begin{defn}[Languages] 
  An \emph{alphabet} is a set $A$ whose elements are \emph{letters}.
  A \emph{word} is a finite sequence of letters, and the collection of
  all words is denoted $A^*$.  A \emph{language $L$ over an alphabet
    $A$} is any subset of $A^*$.  When $A$ is identified with the
  oriented edges of a one-vertex cell complex $K$ with fundamental
  group $G$, there are natural bijections between languages over $A$,
  collections of paths in $K$, and collections of paths in the
  universal cover of $K$ that are invariant under the $G$-action.  A
  language $L$ over $A$ \emph{maps onto $G$} if the corresponding
  $G$-invariant set of paths contains at least one path connecting
  every ordered pair of vertices, or equivalently contains a word
  representing every element of $G$.
\end{defn}

Two key properties of languages are used to define biautomatic groups.

\begin{defn}[Regular languages]
  A \emph{finite state automaton} is a finite directed graph with
  decorations.  The edges are labeled by a set $A$, there is a
  specified \emph{start vertex}, and a subset of the vertices
  designated as \emph{accept states}.  Concatenating the labels on the
  edges of any path starting at the start vertex and ending at an
  accept state creates an \emph{accepted word}.  The collection of all
  accepted words is the \emph{language accepted} by the automaton.
  Finally, a language is \emph{regular} if it is accepted by some
  finite state automaton.
\end{defn}

\begin{defn}[Fellow travelling languages]
  Let $G$ be a group generated by a finite set $A$ closed under
  inversion and let $L$ be a language over $A$ that maps onto $G$.
  The language $L$ \emph{fellow travels} if there is a constant $k$
  such that all pairs of paths in $\cayley(G,A)$ that start and end at
  most $1$ unit apart and corresponding to words in $L$ synchronously
  $k$-fellow travel.
\end{defn}

The following characterization of biautomaticity is from
\cite[Lemma~2.5.5]{ECHLPT92}.

\begin{thm}[Biautomaticity]\label{thm:biaut}
  If $G$ is a group generated by a finite set $A$ closed under
  inversion and $L$ is a language over $A$ that maps onto $G$, then
  $L$ is part of a biautomatic structure for $G$ iff $L$ is regular
  and fellow travels.
\end{thm}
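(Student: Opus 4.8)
The plan is to unwind the definition of a biautomatic structure from
\cite{ECHLPT92} and observe that, after a mild reorganization, the two
conditions ``$L$ is regular'' and ``$L$ fellow travels'' are precisely
what that definition demands.  Recall that such a structure consists of
a regular language $L$ over $A$ that maps onto $G$ together with, for
each $a \in A \cup \{\varepsilon\}$, finite state automata over the
alphabet of padded pairs of letters recognizing the \emph{right}
multiplier relation $R_a = \{(w_1,w_2)\in L\times L : \bar w_1 a = \bar
w_2 \text{ in } G\}$ and the \emph{left} multiplier relation ${}_aR =
\{(w_1,w_2)\in L\times L : a\bar w_1 = \bar w_2 \text{ in } G\}$, where
$\bar w$ denotes the element of $G$ represented by the word $w$.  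Thus
regularity of $L$ is part of the data in both directions, and the real
content is the passage between these multiplier automata and a uniform
synchronous fellow traveller constant.

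For the ``if'' direction, assume $L$ is regular and fellow travels with
constant $k$.  I would build a multiplier automaton that reads a padded
pair $(w_1,w_2)$, runs the word acceptor for $L$ on each coordinate, and
simultaneously tracks the relative position of the two
$\cayley(G,A)$-paths at time $t$, namely $\bar u_1(t)^{-1}\bar u_2(t)$
where $u_i(t)$ is the prefix of $w_i$ read so far.  Since consuming a
padded letter updates this relative position by left and right
multiplication by the two new letters, the automaton only needs to
remember positions in the $k$-ball, passing to a dead state otherwise;
it accepts when both coordinates are accepted, the dead state was not
entered, and the terminal relative position is $a$.  If $w_1,w_2 \in L$
genuinely satisfy $\bar w_1 a = \bar w_2$, then their paths from the
identity to $\bar w_1$ and to $\bar w_2$ start $0$ apart and end at most
$1$ apart, so the hypothesis makes them synchronously $k$-fellow travel
and the automaton accepts; conversely it accepts only such pairs.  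So
this is the right multiplier automaton for $a$.  Repositioning the two
paths by a deck transformation so that they instead share a terminal
vertex and start one unit apart --- legitimate because the relevant set
of paths is $G$-invariant --- and asking that the \emph{initial}
relative position be $a$ yields the left multiplier automaton.  That the
definition of a fellow travelling language permits the two paths to
differ at \emph{both} ends, not merely the start, is exactly what makes
both constructions available.  Together with the word acceptor these
automata form a biautomatic structure containing $L$.

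For the ``only if'' direction, $L$ is regular by definition, so only the
fellow traveller bound is at issue.  Let $M$ be the right multiplier
automaton for some $a$, with state set of size $N$.  I claim that only
finitely many group elements --- indeed at most one per state of $M$ ---
occur as the relative position of an accepted pair $(w_1,w_2)$ at some
time $t$.  Given two accepted pairs both sitting at a state $s$ at times
yielding relative positions $g_1$ and $g_2$, splice the prefix of the
first (up to that time) onto the suffix of the second (after its time);
since both runs occupy $s$ at the splice point, the spliced padded pair
is again accepted by $M$, so its coordinates lie in $L$ and satisfy
$\bar x a = \bar y$, and a short manipulation of this identity together
with the two original multiplier identities forces $g_1 = g_2$.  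Hence,
writing $\delta_s$ for the element attached to state $s$, every relative
position that occurs has length at most $k := \max_s \size{\delta_s}$.
Finally, given $L$-words whose $\cayley(G,A)$-paths start and end within
$1$ of one another, a deck transformation moves them so that they start
(or, symmetrically, end) at the same vertex, at which point one of the
multiplier automata witnesses the pair; so they synchronously
$k$-fellow travel, and $L$ fellow travels.

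The step I expect to be the main obstacle is the splicing argument:
when $w_1$ and $w_2$ have different lengths the shorter coordinate is
end-padded, and a splice performed after padding has begun in one
coordinate but not the other produces an ill-formed padded word.  One
must therefore restrict to splice points lying in the common unpadded
part of both coordinates, or record in the automaton's state whether
padding has started, before the pigeonhole argument on the $N$ states
can be run.  Since this characterization is used below only as a black
box, for the purposes of this article we simply cite
\cite[Lemma~2.5.5]{ECHLPT92}.
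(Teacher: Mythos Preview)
The paper does not prove this theorem at all: it simply records it as a citation, introducing the statement with ``The following characterization of biautomaticity is from \cite[Lemma~2.5.5]{ECHLPT92}'' and moving on.  Your sketch is the standard argument behind that lemma --- building multiplier automata by tracking the relative position in the $k$-ball for the forward direction, and a state-splicing pigeonhole to recover a uniform fellow-traveller constant for the converse --- and it is correct, including your identification of the padding subtlety as the one place requiring care.  Since you close by invoking exactly the same citation the paper uses, your proposal and the paper agree in substance; you have simply supplied the content the paper chose to outsource.
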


Recall that a \emph{geometric} action of a group on a space is one
that is proper, cocompact and by isometries and note that the action
of $G$ on its Cayley graph is geometric, but also free and transitive
on vertices.  By focusing on the paths in the Cayley graph
corresponding to the language $L$ and slightly modifying a few
definitions, Jacek \'{S}wi\c{a}tkowski was able in \cite{Sw06} to
extend the characterization of biautomaticity given in
Theorem~\ref{thm:biaut} to groups merely acting geometrically on a
connected graph.

\begin{defn}[Modified notions]
  Let $G$ be a group acting geometrically on a connected graph $X$,
  and let $\mathcal{P}$ be a $G$-invariant collection of paths in $X$.
  The set $\mathcal{P}$ is said to \emph{map onto $G$} if every path
  start and ends in the $G$-orbit of a particular vertex $v$ and every
  ordered pair of vertices in the $G$-orbit of $v$ are connected by at
  least one path in $\mathcal{P}$.  The $G$-invariant paths in
  $\mathcal{P}$ correspond to a set of paths in a finite graph (with
  loops when edges are inverted) and we say $\mathcal{P}$ is
  \emph{regular} if the set of paths over the finite alphabet of
  oriented edges in the quotient is regular in the usual sense.
  Finally, the fellow traveler property is modified to consider pairs
  of paths that start and end within $\ell$ units of each other where
  $\ell$ has been chosen large enough so that any two vertices in the
  $G$-orbit of $v$ are connected by a sequence of other vertices in
  the orbit with adjacent vertices in the sequence at most $\ell$
  units apart.  When the $G$-orbit is vertex transitive, $\ell=1$ is
  sufficient.
\end{defn}

\begin{thm}[\'{S}wi\c{a}tkowski]\label{thm:group}
  Let $G$ be a group acting geometrically on a connected graph $X$,
  and let $\mathcal{P}$ be a $G$-invariant collection of paths in $X$
  that maps onto $G$.  If $\mathcal{P}$ is regular and fellow travels
  in the modified sense described above, then $G$ is biautomatic.
\end{thm}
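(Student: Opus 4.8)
The plan is to reduce the statement to Theorem~\ref{thm:biaut} by transporting the structure of $\mathcal{P}$ from $X$ to a Cayley graph of $G$ along the quasi-isometry supplied by the geometric action. Fix the base vertex $v$ and let $\ell$ be the constant from the modified hypotheses, so that any two vertices of the orbit $Gv$ are joined by a chain of orbit vertices whose consecutive members are at $X$-distance at most $\ell$. Put $A=\{\,a\in G: d_X(v,av)\le \ell\,\}$. Properness and cocompactness make $A$ finite; it is closed under inversion; it contains the finite stabilizer $S$ of $v$; and the chain condition just recalled shows it generates $G$. Finally the orbit map $g\mapsto gv$ is a $G$-equivariant quasi-isometry $\cayley(G,A)\to X$ (this is the Milnor--\v{S}varc phenomenon): the chain condition gives coarse surjectivity and one coarse Lipschitz bound, while in the other direction bounded $X$-distance forces bounded word length because the action is proper and the orbit is coarsely dense. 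In particular, two vertices of $\cayley(G,A)$ at distance $\le 1$ have images at $X$-distance $\le\ell$, directly from the definition of $A$.

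Next I would convert each path $p\in\mathcal{P}$ into a word $w(p)\in A^*$. Passing to a barycentric subdivision of $X$ (which changes none of the hypotheses, only the constants) we may assume the action is without inversions, so that — modulo a bounded amount of bookkeeping with the finite cell stabilizers — an edge of $X$ is recorded by the edge of the finite quotient graph $\bar X=X/G$ onto which it maps together with a coset describing the translating element. Fixing, once and for all, for each oriented edge $\bar e$ of $\bar X$ a lift in $X$ and group elements placing an orbit point within a fixed radius $R$ (a coarse-density radius for $Gv$) of each of its endpoints, a path $p$ whose successive edges project to $\bar e_1,\dots,\bar e_m$ is sent to $w(p)=\omega(\bar e_1,\bar e_2)\,\omega(\bar e_2,\bar e_3)\cdots\omega(\bar e_{m-1},\bar e_m)$, where each block $\omega(\bar e_i,\bar e_{i+1})\in A^*$ is a fixed word whose value is the bounded group element carrying the chosen orbit point near the terminus of $\bar e_i$ to the one near the origin of $\bar e_{i+1}$, padded with cancelling pairs to a common fixed length $C$; at the two ends of $p$ one inserts further bounded correction words, including a factor from $S$, so that $\overline{w(p)}$ is exactly the element determined by the two endpoints of $p$ in $Gv$. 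Let $L\subseteq A^*$ be the resulting set of words. The map $p\mapsto w(p)$ reads the quotient-path word over the finite edge-alphabet of $\bar X$ and performs a sliding-window substitution, so it is realized by a finite-state transducer; since regular languages are closed under such transductions and, by hypothesis, the quotient-path words coming from $\mathcal{P}$ form a regular language, $L$ is regular. Because $\mathcal{P}$ maps onto $G$ and the $S$-factors were appended, $L$ maps onto $G$.

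It remains to check that $L$ fellow travels, and this I expect to be the crux. Let $u=w(p)$ and $u'=w(q)$ correspond to paths in $\cayley(G,A)$ that start and end at most one unit apart; after a $G$-translation we may read $u$ from the identity. Translating $p$ and $q$ by the appropriate group elements (staying inside the $G$-invariant family $\mathcal{P}$) so that they start near $v$ and near the relevant neighbour of $v$, the first paragraph shows their starts and ends lie at $X$-distance $O(\ell)$, so the modified fellow-traveler hypothesis applies and yields a uniform $k$ with $d_X\big(p(t),q(t)\big)\le k$ for all $t$. The padding to common syllable length $C$ is exactly what makes this transfer synchronously: position $j$ in $w(p)$ lies within a bounded distance (controlled by $C$ and $R$) of the orbit point attached to the vertex $p(\lfloor j/C\rfloor)$, and likewise for $w(q)$ and $q(\lfloor j/C\rfloor)$, so the $X$-bound at the common time $\lfloor j/C\rfloor$ pushes through the quasi-isometry to a bound $d_{\cayley}\big(u(j),u'(j)\big)\le k'$ depending only on $k$, the quasi-isometry constants, $C$, and $R$; the tail of the longer word is harmless because there the longer path is trapped within $k$ of the common terminal vertex. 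Thus $L$ is regular, maps onto $G$, and synchronously $k'$-fellow travels, so Theorem~\ref{thm:biaut} exhibits it as part of a biautomatic structure and $G$ is biautomatic.

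The genuine obstacle is the last step: the rest is bookkeeping, but producing a \emph{synchronous} fellow-traveler constant in $\cayley(G,A)$ — rather than a merely coarse or asynchronous one — forces the conversion $p\mapsto w(p)$ to be ``unit speed'', and it is the interplay between that rigidity and the possible nontriviality of the vertex and edge stabilizers (absent in the free, vertex-transitive setting of Theorem~\ref{thm:biaut}) that makes the reduction delicate.
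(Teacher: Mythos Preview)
The paper does not contain a proof of this theorem: it is stated as a result of \'{S}wi\c{a}tkowski and attributed to \cite{Sw06}, with no argument given beyond the surrounding discussion of the modified notions. So there is no ``paper's own proof'' to compare your attempt against.

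That said, your sketch follows the expected line of argument for results of this type: transport the path system along the Milnor--\v{S}varc quasi-isometry to a Cayley graph, encode paths as words by a bounded local rule (hence by a finite-state transducer, preserving regularity), and then check that synchronous fellow-traveling survives because the encoding has bounded block length. You correctly identify the one genuinely delicate point, namely arranging the encoding to be unit-speed so that the \emph{synchronous} bound transfers, and handling the finite vertex and edge stabilizers that arise when the action is not free. This is essentially the content of \'{S}wi\c{a}tkowski's argument in \cite{Sw06}, and your outline is consistent with it; the details you flag as ``bookkeeping'' (lifts of edges, padding, the $S$-factors at the ends) are exactly where the work lies, but nothing in your sketch is wrong in spirit.
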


%%%%%%%%%%%%%%%%%%%%%%%%%%%%%%%%%%%%%%%%%
\section{Diagrams}\label{sec:diagrams}
%%%%%%%%%%%%%%%%%%%%%%%%%%%%%%%%%%%%%%%%%

The curvature of a \pe $\cat(0)$ $2$-complex impacts the distances
between paths in its $1$-skeleton via the reduced disc diagrams that
fill its closed loops.

\begin{defn}[Disc diagrams]
  A \emph{disc diagram} $D$ is a contractible $2$-complex with an
  implicit embedding into the Euclidean plane and a \emph{nonsingular
    disc diagram} is a $2$-complex homeomorphic the closed unit disc
  $\D^2$.  Disc diagrams that are not nonsingular are called
  \emph{singular} and the main difference between the two is that
  singular disc diagrams contain \emph{cut points}, i.e. points whose
  removal disconnects the diagram.  The boundary of $D$ is a
  combinatorial loop that traces the outside of $D$ in a clockwise
  fashion.  When $D$ is nonsingular, this is clear and when $D$ is
  singular amibiguities are resolved by the planar embedding.  The
  boundary cycle is the essentially unique loop in $D$ that can be
  homotoped into $\R^2 \setminus D$ while moving each point an
  arbitrarily small distance.
\end{defn}

\begin{defn}[Diagrams over complexes]
  Let $D$ be a disc diagram and let $K$ be a regular cell complex.  We
  say that a cellular map $f:D\to K$ turns $D$ into a \emph{disc
    diagram over $K$} and its boundary cycle is the image of the
  boundary cycle of $D$ under the map $f$.  When a loop $\alpha$ in
  $K$ is the boundary cycle of a disc diagram $D$ over $K$ we say that
  $\alpha$ \emph{bounds} $D$ and that $D$ \emph{fills} $\alpha$.
\end{defn}

\begin{defn}[Reduced disc diagrams]
  Let $f:D\to K$ be a disc diagram over a regular cell complex $K$.
  When there is a pair of $2$-cells in $D$ sharing an edge $e$ such
  that they are folded over along $e$ and identified under the map
  $f$, we say $D$ contains a \emph{cancellable pair} and a disc
  diagram over $K$ with no cancellable pairs is said to be
  \emph{reduced}.  Note that a disc diagram over $K$ is a reduced disc
  diagram over $K$ iff the map $f:D\to K$ is an immersion away from
  the $0$-skeleton of $D$, which is true iff for every vertex $v$ in
  $D$, the induced map from the link of $v$ to the link of $f(v)$ is
  an immersion.
\end{defn}

The importance of reduced disc diagrams over $2$-complexes derives
from the following classical result.  For a proof see \cite{McWi02}.

\begin{thm}[Van Kampen's Lemma]\label{thm:vk}
  Every combinatorial loop in a simply-connected regular cell complex
  bounds a reduced disc diagram.
\end{thm}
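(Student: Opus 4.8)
The plan is to prove this in two stages: first produce *some* disc diagram filling the loop, and then show that if the diagram is not reduced it can be replaced by a strictly smaller one, so that a minimal diagram is automatically reduced.

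For the first stage, let $\alpha$ be a combinatorial loop in the simply-connected regular cell complex $K$, based at a vertex $v$. Since $K$ is simply connected, $\alpha$ is null-homotopic, so there is a continuous map $g\colon \D^2 \to K$ whose restriction to the boundary circle is (a reparametrization of) $\alpha$. The idea is to homotope $g$, rel boundary, into a cellular map and to subdivide the domain $\D^2$ into a combinatorial disc $D$ on which this cellular map is defined. Concretely, one triangulates $\D^2$ finely enough and pushes $g$ across skeleta by general position and the homotopy extension property: interior vertices can be moved off lower-dimensional strata, then edges, then $2$-cells, arranging that each open cell of the resulting subdivided $D$ maps to a single open cell of $K$ of equal dimension or collapses to a lower skeleton. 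Collapsing the cells that map to lower dimension and amalgamating as needed leaves a genuine disc diagram $D$ with a cellular map $f\colon D \to K$ whose boundary cycle is $\alpha$. (This is the standard ``simplicial approximation / pushing across skeleta'' argument; I would cite the topological input and not belabor it.)

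For the second stage, suppose $f\colon D \to K$ is a disc diagram over $K$ with boundary cycle $\alpha$ that contains a cancellable pair: two $2$-cells $\sigma_1,\sigma_2$ of $D$ sharing an edge $e$, folded over along $e$ and identified by $f$. Then one performs the standard *fold-and-excise* move: remove the open $2$-cells $\sigma_1$, $\sigma_2$ and the open edge $e$, and identify the remaining portions of $\partial\sigma_1$ and $\partial\sigma_2$ in the pattern dictated by the fact that $f$ identifies them. Because the two cells were folded over along $e$, the result $D'$ is again a disc diagram (possibly singular), the map $f$ descends to a cellular map $f'\colon D' \to K$, and the boundary cycle is unchanged and still equals $\alpha$. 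Crucially $D'$ has two fewer $2$-cells than $D$. Since the number of $2$-cells is a non-negative integer, iterating this move terminates, and the resulting diagram has no cancellable pair; that is, it is reduced. By the alternative description of reducedness recorded in the definition, $f'$ is then an immersion away from the $0$-skeleton.

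The main obstacle is making the first stage honest: passing from an arbitrary continuous null-homotopy to an actual combinatorial disc diagram requires the cellular-approximation machinery for CW/regular cell complexes and a little care that the domain remains a disc after the necessary collapses and identifications. The second stage is essentially bookkeeping — one must check that the fold-and-excise move genuinely yields a planar disc diagram and preserves the boundary cycle — but it contains no real difficulty. I would therefore present the second stage in full and refer to \cite{McWi02} for the details of the first, since that is where the topological subtleties live.
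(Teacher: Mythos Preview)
The paper does not actually prove Theorem~\ref{thm:vk}; it merely states the result as classical and refers the reader to \cite{McWi02} for a proof. Your two-stage outline --- first obtain some disc diagram from a null-homotopy via cellular approximation, then iteratively cancel folded pairs of $2$-cells to force reducedness --- is exactly the standard argument and is what one finds in the cited source, so your proposal is consistent with the paper's intent. One small caution on the second stage: when the two cells of a cancellable pair share more of their boundary than just the edge $e$, the fold-and-excise can pinch off a $2$-sphere component, which must then be discarded; this is routine but worth a sentence, since you explicitly claim the result remains a planar disc diagram.
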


In $2$-dimensions, curvature conditions on the cell complex $K$
propagate to reduced disc diagrams over $K$.

\begin{cor}[$\cat(0)$ disc diagrams]\label{cor:vk}
  Every combinatorial loop in a \pe $\cat(0)$ $2$-complex bounds a \pe
  $\cat(0)$ disc diagram.
\end{cor}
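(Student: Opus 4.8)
The plan is to combine Van Kampen's Lemma (Theorem~\ref{thm:vk}) with Corollary~\ref{cor:char-npc}. Given a combinatorial loop $\alpha$ in a \pe $\cat(0)$ $2$-complex $K$, Theorem~\ref{thm:vk} already produces a reduced disc diagram $f:D\to K$ filling $\alpha$; what remains is to check that $D$, equipped with the \pe metric pulled back from $K$ cell by cell, is itself a \pe $\cat(0)$ $2$-complex. First I would give $D$ its natural \pe structure: each $2$-cell of $D$ maps isometrically onto a polytopal cell of $K$, so we metrize that cell of $D$ by the corresponding Euclidean polytope, and the finitely many isometry types used in $K$ bound the isometry types used in $D$, so Bridson's theorem applies and $D$ is a geodesic metric space. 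Since $D$ is a disc diagram it is contractible, hence connected and simply connected, so by Proposition~\ref{prop:char-cat} it suffices to prove that $D$ is nonpositively curved.

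For nonpositive curvature I would invoke Corollary~\ref{cor:char-npc} with $Y=D$ and $X=K$. The hypotheses of that corollary ask for a cellular map that is an isometry on the interior of each cell and an immersion away from the $0$-skeleton of $Y$. The map $f:D\to K$ is cellular by construction and is an isometry on each open cell by the way we metrized $D$. The immersion condition away from the $0$-skeleton is exactly the content of the ``reduced'' hypothesis: the definition of a reduced disc diagram states precisely that $f$ is an immersion away from the $0$-skeleton of $D$ (equivalently, $D$ contains no cancellable pair). Since $K$ is \pe $\cat(0)$, it is in particular nonpositively curved, so Corollary~\ref{cor:char-npc} yields that $D$ is nonpositively curved, and Proposition~\ref{prop:char-cat} then upgrades this to $\cat(0)$.

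There is essentially no hard step here: the corollary is a near-immediate consequence of the results already assembled, and the only thing to be careful about is bookkeeping — confirming that the \pe structure on $D$ is well defined (which uses that $f$ restricts to an isometry on closed cells, so the gluing data on $D$ is induced from that on $K$) and that ``reduced'' translates verbatim into the immersion hypothesis of Corollary~\ref{cor:char-npc}. The mild subtlety worth a sentence is that $D$ may be singular, so it is not a manifold and the planar embedding is used only to pin down the boundary cycle; none of the curvature reasoning is affected, since Proposition~\ref{prop:char-npc} and Corollary~\ref{cor:char-npc} are stated for arbitrary \pe $2$-complexes and only examine vertex links. I would close by noting that the boundary cycle of $D$ maps to $\alpha$ by construction, so $\alpha$ bounds the \pe $\cat(0)$ disc diagram $D$, as claimed.
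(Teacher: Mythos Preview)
Your proposal is correct and follows essentially the same route as the paper's proof: apply Van Kampen's Lemma to obtain a reduced disc diagram, pull back the \pe metric from $K$, use Corollary~\ref{cor:char-npc} (with the ``reduced'' condition supplying the immersion hypothesis) to get nonpositive curvature, and finish with Proposition~\ref{prop:char-cat}. The paper's version is terser and explicitly notes that $\cat(0)$ spaces are contractible (hence simply connected) to justify invoking Theorem~\ref{thm:vk}, a point you use implicitly; otherwise the arguments coincide.
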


\begin{proof}
  Let $\alpha$ be a combinatorial loop in a \pe $\cat(0)$ $2$-complex
  $K$.  Since $\cat(0)$ spaces are contractible, $\alpha$ bounds a
  reduced disc diagram $D$ by Theorem~\ref{thm:vk}.  By pulling back
  the metric of $K$ to $D$, we can turn it into a \pe disc diagram.
  Next, by Corollary~\ref{cor:char-npc}, $D$ is nonpositively curved.
  And finally, by Proposition~\ref{prop:char-cat} $D$ is $\cat(0)$.
\end{proof}

The fine structure of a \pe $\cat(0)$ disc diagram is highlighted when
we focus on its local curvature contributions.

\begin{defn}[Angled $2$-complexes] 
  An \emph{angled $2$-complex} is a $2$-complex $K$ where the corner
  of every polygon is assigned a number.  When $K$ is \pe, this number
  is naturally the measure of its angle, or equivalently the length of
  the spherical arc that is the link of this vertex in this polygon.
\end{defn}

\begin{defn}[Curvatures] 
  The curvature of a $n$-gon in an angled $2$-complex $K$ is the sum
  of its angles minus the expected Euclidean angle sum of $(n-2)\pi$.
  When $K$ is \pe, its polygon curvatures are all $0$.  The curvature
  of a vertex $v$ in an angled $2$-complex is $2\pi$ plus $\pi$ times
  the Euler characteristic of its link minus the sum of angles sharing
  $v$ as a vertex.  When the angled $2$-complex is a disc diagram $D$
  this reduces to $2\pi$ minus the angle sum for interior vertices and
  $\pi$ minus the angle sum for boundary vertices that are not cut
  points.  Note that when $D$ is \pe and nonpositively curved, the
  curvature of an interior vertex is nonpositive.
\end{defn}

\begin{thm}[Combinatorial Gauss-Bonnet]\label{thm:cbg}
  If $K$ is an angled $2$-complex then the sum of the vertex
  curvatures and the polygon curvatures is $2\pi$ times the Euler
  characteristic of $K$.
\end{thm}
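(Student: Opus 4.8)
The plan is to prove the Combinatorial Gauss–Bonnet formula by a direct double-counting argument, summing the local curvature contributions from vertices and polygons and checking that the total collapses to $2\pi\chi(K)$. First I would fix notation: let $V$, $E$, and $F$ denote the sets of vertices, edges, and $2$-cells (polygons) of $K$, and for each polygon $p\in F$ let $n_p$ be its number of sides and let $\mathrm{ang}(p)$ be the sum of the angles assigned to its corners. Recall from the definitions that the polygon curvature is $\kappa(p) = \mathrm{ang}(p) - (n_p-2)\pi$ and the vertex curvature is $\kappa(v) = 2\pi + \pi\,\chi(\mathrm{lk}(v)) - \theta(v)$, where $\theta(v)$ is the sum of all corner angles at $v$. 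The total curvature is $\sum_{p} \kappa(p) + \sum_{v}\kappa(v)$, and I want to show this equals $2\pi\chi(K) = 2\pi(|V| - |E| + |F|)$.

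The key step is to track three kinds of terms separately: the angle terms, the constant multiples of $\pi$ coming from polygon side-counts and Euler characteristics of links, and the leading $2\pi$ per vertex. The angle terms cancel exactly: each corner of each polygon contributes its angle with a $+$ sign inside some $\mathrm{ang}(p)$ and with a $-$ sign inside some $\theta(v)$, so $\sum_p \mathrm{ang}(p) - \sum_v \theta(v) = 0$. For the side-count terms, $\sum_p (n_p - 2)\pi = \pi\sum_p n_p - 2\pi|F|$; here $\sum_p n_p$ counts (polygon, side) incidences. For the link terms, I would use that the link of a vertex $v$ is a graph whose vertices correspond to corners of polygons at $v$ (equivalently to ends of edges at $v$) and whose edges correspond to corners of polygons at $v$, so $\chi(\mathrm{lk}(v)) = (\text{number of edge-ends at } v) - (\text{number of polygon-corners at } v)$. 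Summing over $v$: $\sum_v (\text{edge-ends at }v) = 2|E|$ since each edge has two ends, and $\sum_v (\text{polygon-corners at }v) = \sum_p n_p$ since each polygon has $n_p$ corners. Hence $\pi\sum_v \chi(\mathrm{lk}(v)) = 2\pi|E| - \pi\sum_p n_p$. Finally $\sum_v 2\pi = 2\pi|V|$.

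Assembling these pieces: the total curvature is
\[
\Big(\sum_p \mathrm{ang}(p) - \sum_v \theta(v)\Big) - \Big(\pi\sum_p n_p - 2\pi|F|\Big) + 2\pi|V| + \Big(2\pi|E| - \pi\sum_p n_p\Big),
\]
wait — I must be careful with signs, since $\kappa(p)$ subtracts $(n_p-2)\pi$ and $\kappa(v)$ adds $\pi\chi(\mathrm{lk}(v))$. Carrying the signs through, the $\pi\sum_p n_p$ contributions from the two sources must be reconciled; I expect the bookkeeping to yield $2\pi|V| - 2\pi|E| + 2\pi|F| = 2\pi\chi(K)$ after the angle terms vanish and the $\sum_p n_p$ terms combine.

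The main obstacle — and the only genuinely delicate point — is justifying the identity $\chi(\mathrm{lk}(v)) = (\text{edge-ends at }v) - (\text{polygon-corners at }v)$ in full generality. This requires being careful about what "link" means combinatorially when $K$ is merely a regular cell complex (not simplicial): an edge incident to $v$ at both ends, or a polygon with several corners at $v$, contributes multiple vertices (respectively edges) to $\mathrm{lk}(v)$. I would argue that $\mathrm{lk}(v)$, as a graph, has exactly one vertex for each $(\text{edge},\text{end-at-}v)$ pair and exactly one edge for each $(\text{polygon},\text{corner-at-}v)$ pair, with the incidences inherited from $K$; this is essentially the definition of the link of a vertex in a cell complex and matches the PE description already in the excerpt (the link is the $\epsilon$-sphere around $v$). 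Once that combinatorial description of $\mathrm{lk}(v)$ is in hand, the Euler-characteristic computation and the subsequent summation are routine, and the formula follows.
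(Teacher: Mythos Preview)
Your approach is exactly the one the paper sketches: observe that every assigned angle appears once in a polygon curvature with a plus sign and once in a vertex curvature with a minus sign, so the angle contributions cancel and the remaining bookkeeping reduces to $2\pi\chi(K)$. The paper does not give more detail than this, citing \cite{McWi02}.

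Two small points to clean up. First, your description of $\mathrm{lk}(v)$ has a slip: its \emph{vertices} are the edge-ends at $v$ and its \emph{edges} are the polygon-corners at $v$ (you wrote ``corners'' for both), though your formula $\chi(\mathrm{lk}(v)) = (\text{edge-ends at }v) - (\text{polygon-corners at }v)$ is correct. Second, your sign confusion is not your fault: the paper's stated vertex-curvature formula should read $2\pi$ \emph{minus} $\pi\chi(\mathrm{lk}(v))$ minus the angle sum, as one sees from the disc-diagram specialization it gives immediately afterward (interior vertex has circle link, $\chi=0$, curvature $2\pi-\theta$; boundary non-cut vertex has arc link, $\chi=1$, curvature $\pi-\theta$). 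With that sign the computation closes exactly:
\[
2\pi|V| + 2\pi|F| - \pi\sum_p n_p - \pi\bigl(2|E| - \sum_p n_p\bigr) = 2\pi(|V|-|E|+|F|).
\]
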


A proof of this relatively elementary result can be found in
\cite{McWi02}.  The basic idea is that every assigned angle
contributes to exactly one vertex curvature and exactly one polygon
curvature but with opposite signs.  The curvature sum is thus
independent of the angles assigned and minor bookkeeping equates it
with $2\pi$ times the Euler characteristic.  The main result we need
is a easy corollary.

\begin{cor}\label{cor:disc}
  If $D$ is a \pe $\cat(0)$ disc diagram, then the sum of its boundary
  vertex curvatures is at least $2\pi$.
\end{cor}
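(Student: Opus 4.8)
The plan is to read off the result directly from the Combinatorial Gauss--Bonnet formula (Theorem~\ref{thm:cbg}) and then discard the contributions already known to be harmless. First I would record that a disc diagram is by definition a contractible $2$-complex, so $\chi(D)=1$, and Theorem~\ref{thm:cbg} says that the sum of all of the vertex curvatures of $D$ together with the sum of all of its polygon curvatures equals $2\pi$. Since $D$ is a \pe disc diagram, each of its polygons is a Euclidean polygon carrying its actual angles, so every polygon curvature vanishes and the sum of all of the vertex curvatures of $D$ is exactly $2\pi$.

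Next I would partition the vertices of $D$ into the interior vertices — those whose link is a circle — and the boundary vertices — those whose link is a disjoint union of one or more arcs, a class that includes the cut points of a singular diagram. The hypothesis that $D$ is \pe and $\cat(0)$ implies via Proposition~\ref{prop:char-cat} that $D$ is in particular nonpositively curved, and, as noted immediately after the definition of curvature, this forces the curvature of every interior vertex of $D$ to be nonpositive. Writing the total vertex curvature as the sum over interior vertices plus the sum over boundary vertices, equal to $2\pi$, and using that the interior summand is at most $0$, I conclude that the sum of the boundary vertex curvatures is at least $2\pi$, which is the claim.

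There is no genuine obstacle here; the only point deserving a moment's care is the treatment of cut points when $D$ is singular, since a cut point lies on the boundary cycle but its link is a disjoint union of several arcs, so its curvature is not simply $\pi$ minus its angle sum. This causes no difficulty, because the argument bounds only the \emph{interior} contribution, and only from above by $0$: every non-interior vertex, cut points included, is placed on the boundary side of the equality, so whatever curvature the cut points contribute is already subsumed in the Gauss--Bonnet total of $2\pi$. In short, the corollary is a direct bookkeeping consequence of Theorem~\ref{thm:cbg}, the vanishing of \pe polygon curvatures, and the nonpositivity of interior vertex curvatures in a nonpositively curved \pe disc diagram.
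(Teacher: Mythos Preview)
Your proof is correct and follows essentially the same route as the paper's: apply Combinatorial Gauss--Bonnet with $\chi(D)=1$, note that the \pe hypothesis kills the polygon curvatures, and use nonpositive curvature to bound the interior vertex contribution by $0$. Your extra remark about cut points in the singular case is a nice clarification but not needed for the argument.
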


\begin{proof}
  This follows from Theorem~\ref{thm:cbg} once we note that being a
  disc diagram implies its Euler characteristic is $1$, \pe implies
  its face curvatures are $0$ and $\cat(0)$ implies its interior
  vertex curvatures are nonnegative.
\end{proof}

%%%%%%%%%%%%%%%%%%%%%%%%%%%%%%%%%%%%%%
\section{Geodesics}\label{sec:geos}
%%%%%%%%%%%%%%%%%%%%%%%%%%%%%%%%%%%%%%

In this section we use disc diagrams to turn an arbitrary path into a
geodesic inside a $\cat(0)$ triangle-square complex.  This procedure
uses moves that modify paths by pushing across simple diagrams.  After
introducing these diagrams and moves, we describe the procedure.

\begin{defn}[Doubly-based disc diagrams]\label{def:doubly-based}
  A \emph{doubly-based disc diagram} is a nonsingular disc diagram $D$
  with two distinquished boundary vertices $u$ and $v$.  An example is
  shown in Figure~\ref{fig:sample-disc}.  The vertex $u$ is its
  \emph{start vertex} and $v$ is its \emph{end vertex}.  When $u$ and
  $v$ are distinct there are two paths in the boundary cycle of $D$
  that start at $u$ and end at $v$.  The one that proceeds clockwise
  along the boundary is the \emph{old path} and the one that proceeds
  counter-clockwise along the boundary is \emph{new path}.  When
  illustrating doubly-based diagrams, we place $u$ on the left and $v$
  on the right so that the old path travels along the top and the new
  path travels along the bottom.  The old path/new path terminology is
  extended to the case $u=v$ by defining the old path as the clockwise
  boundary cycle starting and ending at $u=v$ and the new path as the
  trivial path at $u=v$.
\end{defn}

\begin{figure}
  \includegraphics{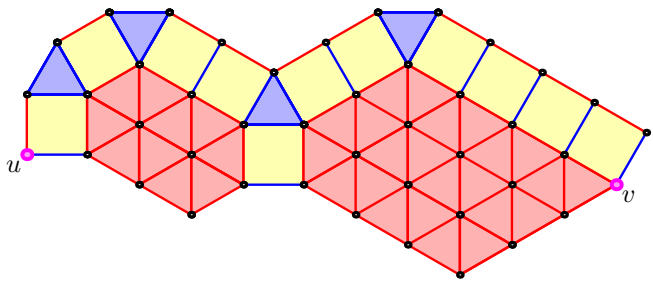}
  \caption{A doubly-based $\cat(0)$ disc
    diagram.\label{fig:sample-disc}}
\end{figure}

\begin{defn}[Moves]\label{def:moves}
  Let $\gamma$ be a path in a $2$-complex $K$ and let $D$ be a
  doubly-based disc diagram.  If there is a map $D\to K$ that sends
  the old path to a directed subpath of $\gamma$, then \emph{applying
    the move $D$} means replacing the image of the old path in
  $\gamma$ with the image of the new path to create an altered path
  $\gamma'$.  This can be thought of as homotoping $\gamma$ across the
  image of $D$.  The move is \emph{length-preserving} when the old and
  new paths have the same length and \emph{length-reducing} when the
  new path is strictly shorter.
\end{defn}

The moves we are interested in are quite specific.

\begin{defn}[Basic moves]\label{def:basic-moves}
  We define five basic types of moves over a triangle-square complex
  that involve a single triangle, a single square, a pair of
  triangles, a finite row of squares with a triangle on either end, or
  a single edge.  We call these \emph{triangle moves}, \emph{square
    moves}, \emph{triangle-triangle moves},
  \emph{triangle-square-triangle moves}, and \emph{trivial moves},
  respectively.  Diagrams illustrating the first four are shown in
  Figure~\ref{fig:moves}.  Note that for each type of move the old
  path is at least as long as the new path.  The triangle move is
  length-reducing.  The start and end vertices for the square move are
  non-adjacent making it length-preserving.  The start and end
  vertices of a triangle-triangle move or a triangle-square-triangle
  move are the unique vertices of valence~$2$ and they are also
  length-preserving.  Note that a triangle-square-triangle move can
  have any number of squares and that a triangle-triangle move can be
  viewed as \emph{degenerate triangle-square-triangle} move with zero
  squares.  Finally, the trivial move (defined below) is not a move in
  the sense of Definition~\ref{def:moves}, but it is included for
  convenience.  Let $D$ be a diagram consisting of a single edge and
  identify both $u$ and $v$ with one of its vertices.  If we view the
  path of length~$2$ starting and ending at $u=v$ as the old path and
  the trivial path at $u=v$ as the new path, then we can view the
  removal of a ``backtrack'' in a path $\gamma$ as applying a trivial
  move across $D$.
\end{defn}

Because the only moves considered from this point forward are the
basic moves described above, we drop the adjective.  Thus a move now
refers to one of these five basic types.  Notice that for every move
(in this revised sense), the subspace distance between the old and new
paths is $1$ since every vertex lies in one of the paths and, if not
in both, it is connected to a vertex in the other path by an edge.

\begin{figure}
  \begin{tabular}{cccc}
    \begin{tabular}{c}\includegraphics{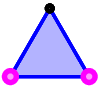}\end{tabular}&
    \begin{tabular}{c}\includegraphics{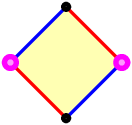}\end{tabular}& 
    \begin{tabular}{c}\includegraphics{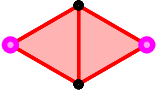}\end{tabular}& 
    \begin{tabular}{c}\includegraphics{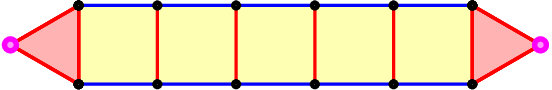}\end{tabular}\\  
  \end{tabular}\\
  \caption{A triangle move, square move, triangle-triangle move and an
    example of a triangle-square-triangle move.\label{fig:moves}}
\end{figure}

\begin{defn}[Edges and paths]
  Let $D$ be a nonsingular triangle-square disc diagram.  Every
  boundary edge $e$ belongs to a unique $2$-cell in $D$ and we call
  $e$ a \emph{triangle edge} or \emph{square edge} depending on the
  type of this cell.  Unless the boundary edges are all of one type,
  the boundary cycle of $D$ can be divided into an alternating
  sequence of maximal triangle edge paths and maximal square edge
  paths that we call \emph{triangle paths} and \emph{square paths}.
  Finally, a boundary vertex with positive curvature that is not the
  transition between a square path and a triangle path is said to be
  \emph{exposed}.
\end{defn}

The disc diagram shown in Figure~\ref{fig:sample-disc} has $12$
triangle edges and $13$ square edges, its boundary is divided into $5$
triangle paths and $5$ square paths, and there are $4$ exposed
vertices, one of which is $u$.  Note that $v$ is not exposed even
though its curvature is positive.

\begin{lem}[Exposed vertices]\label{lem:exposed-vertices}
  If $D$ is a nonsingular triangle-square disc diagram with exposed
  vertex $v$, then a triangle move, square move, or triangle-triangle
  move can be applied to the boundary path of length~$2$ with $v$ as
  its middle vertex.
\end{lem}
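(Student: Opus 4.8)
The plan is to do a local analysis at the exposed vertex $v$, using the fact that its curvature is positive to severely restrict the cells meeting $v$, and then in each remaining case to exhibit the promised move by inspection. Write $\theta$ for the angle sum at $v$; since $v$ is a boundary vertex and, being exposed, has positive curvature, Corollary~\ref{cor:disc}'s underlying curvature formula gives $\theta < \pi$. Each corner at $v$ contributes either $\frac{\pi}{3}$ (from a triangle) or $\frac{\pi}{2}$ (from a square), so the multiset of corners at $v$ is one of: a single triangle, a single square, or two triangles. (Three triangles already give $\theta = \pi$, and a triangle together with a square gives $\theta = \frac{5\pi}{6} < \pi$, so that case must be included as well—let me restate: the possibilities with $\theta<\pi$ are one triangle, one square, two triangles, or a triangle-and-a-square.) The last case, however, is exactly the ``transition between a triangle path and a square path,'' which is excluded by the definition of exposed; so only three configurations survive.

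First I would dispose of the one-cell cases. If the unique cell at $v$ is a single square $Q$, then the two boundary edges at $v$ are two consecutive edges of $Q$, the old path of length $2$ runs along them, and the remaining two edges of $Q$ form the new path: this is precisely the square move of Definition~\ref{def:basic-moves}, and the hypothesis that $v$ is an interior-of-boundary vertex (not a cut point, since $D$ is nonsingular) guarantees $Q$ is embedded so the move applies. If the unique cell at $v$ is a single triangle $T$, the two boundary edges at $v$ are two sides of $T$ and the third side is the new path: this is the triangle move, which is length-reducing.

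The remaining case is two triangles $T_1, T_2$ sharing an edge $f$ emanating from $v$, with the two outer edges (one from each triangle) being the boundary edges through $v$ and forming the old path of length $2$. The opposite sides of $T_1$ and $T_2$ then form a path of length $2$, which is the new path of a triangle-triangle move, and again nonsingularity ensures the relevant subdiagram $T_1 \cup T_2$ is embedded in $D$ so the move is legitimately applicable to the boundary. This exhausts all cases, proving the lemma.

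The main obstacle is not any single case but making rigorous the claim that the small subcomplex (the one square, the one triangle, or the two triangles) is genuinely embedded in $D$ in the way the move diagrams demand, rather than, say, having its ``new path'' edges coincide with each other or double back onto the old path. Here I would lean on $D$ being a \emph{nonsingular} disc diagram—so locally at a boundary vertex that is not a cut point the picture is a genuine fan of $2$-cells between two boundary edges—together with the observation, recorded after Definition~\ref{def:basic-moves}, that the subspace distance between old and new paths in each basic move is exactly $1$; this is compatible with, and essentially forced by, the local structure at $v$. Once the embedding is in hand, matching each configuration to its move is immediate from Figure~\ref{fig:moves}.
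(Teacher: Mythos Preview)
Your proof is correct and follows essentially the same approach as the paper: enumerate the corner configurations at $v$ with angle sum below $\pi$, rule out the triangle-plus-square case via the definition of ``exposed,'' and read off the appropriate move in each of the three surviving cases. The paper's version is terser (it simply says ``In each feasible case there is an obvious move''), while you spell out each case and worry explicitly about embedding, but the argument is the same.
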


\begin{proof}
  Because every angle in a triangle-square complex is $\frac{\pi}{3}$
  or $\frac{\pi}{2}$, the possibilities for positive curvature are
  very limited.  By nonsingularity there is at least one angle at $v$
  but, if positively curved, there are at most two angles at $v$.  The
  only possibilities are one triangle, one square, or two triangles.
  Two squares lead to nonpositive curvature and one of each only
  happens at the transitions between triangle and square paths.  In
  each feasible case there is an obvious move.
\end{proof}

\begin{defn}[Cumulative curvatures]
  Let $D$ be a nonsingular triangle-square disc diagram $D$ with both
  triangle and square edges.  The sum the curvatures of the boundary
  vertices along a square path in the boundary of $D$, including its
  endpoints, is called the \emph{cumulative curvature} of this path.
\end{defn}

If we let $\alpha$ and $\beta$ denote the old and new paths
respectively of the doubly-based disc diagram shown in
Figure~\ref{fig:sample-disc}, then the cumulative curvatures of its
$5$ square paths (starting at $u$ and continuing clockwise) are
$\frac{\pi}{2}$, $\frac{\pi}{3}$, $0$, $\frac{5\pi}{6}$ and
$-\frac{\pi}{3}$.  These add the curvatures from $\beta(1)$, $u$ and
$\alpha(1)$, from $\alpha(2)$ and $\alpha(3)$, from $\alpha(4)$
through $\alpha(8)$, from $\alpha(9)$ through $v = \beta(11)$ and
finally from $\beta(5)$ and $\beta(4)$, respectively.

\begin{lem}[Square paths]\label{lem:square-paths}
  If $D$ is a nonsingular triangle-square disc diagram and $\alpha$ is
  a square path in its boundary containing $k$ exposed vertices, then
  the cumulative curvature of $\alpha$ is at most
  $\frac{\pi}{3}+k\frac{\pi}{2}$.  Moreover, if $\alpha$ contains no
  exposed vertices and has positive cumulative curvature, then a
  triangle-square-triangle move can be applied to a slight extension
  of $\alpha$.
\end{lem}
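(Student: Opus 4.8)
The plan is to analyze a square path $\alpha$ in the boundary of $D$ by looking at the angles that occur at each of its vertices. Every boundary vertex of $\alpha$ (except possibly the two endpoints of $\alpha$, which may be transitions to triangle paths) has the property that all of its incident boundary edges on the $\alpha$-side are square edges. I would first classify which vertex configurations along a square path can have positive curvature. Since every angle in a triangle-square complex is $\frac{\pi}{3}$ or $\frac{\pi}{2}$, a boundary vertex whose incident cells are all squares has nonnegative curvature only when there is exactly one square (curvature $\frac{\pi}{2}$, and such a vertex is exposed) — with two squares the curvature is $0$ and with three or more it is negative. So an interior vertex of the square path that is \emph{not} exposed contributes at most $0$, and an exposed interior vertex contributes exactly $\frac{\pi}{2}$.

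For the first assertion I would argue as follows. The only vertices of $\alpha$ that can contribute positive curvature are: (i) the $k$ exposed vertices in the interior, each contributing at most $\frac{\pi}{2}$; and (ii) the two endpoints of $\alpha$, which are transitions between a square path and a triangle path (when $D$ has both edge types). A transition vertex has a mixture of triangles and squares around it, and by the same curvature bookkeeping — $\pi$ minus the angle sum, with the angle sum a combination of $\frac{\pi}{3}$'s and $\frac{\pi}{2}$'s adding up to at most $\pi$ — the maximum positive curvature at a transition vertex is $\frac{\pi}{6}$ (realized by one triangle and one square: $\pi - \frac{\pi}{3} - \frac{\pi}{2} = \frac{\pi}{6}$). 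Hence the cumulative curvature of $\alpha$ is at most $k\cdot\frac{\pi}{2} + 2\cdot\frac{\pi}{6} = k\frac{\pi}{2} + \frac{\pi}{3}$, which is the claimed bound. I should be slightly careful at the degenerate endpoints (e.g. when an endpoint is a cut point of a larger diagram, or when $u$ or $v$ of a doubly-based diagram coincides with an endpoint), but the curvature estimates only go down in those cases.

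For the second assertion, suppose $\alpha$ contains no exposed vertices but has positive cumulative curvature. By the bound just established, the cumulative curvature is at most $\frac{\pi}{3}$, and since the only possible positive contributions now come from the two transition endpoints (each at most $\frac{\pi}{6}$), positivity forces \emph{both} endpoints of $\alpha$ to be genuine square-triangle transition vertices each carrying positive curvature, and every interior vertex of $\alpha$ to carry curvature exactly $0$ — i.e. each interior vertex of $\alpha$ has exactly two squares on it. This means $\alpha$, together with the single triangle hanging off each end at the two transition vertices, is exactly a row of squares with a triangle on either end: precisely the configuration of a triangle-square-triangle move. I would then take the ``slight extension'' of $\alpha$ to be $\alpha$ with one triangle edge prepended and one appended (reaching the valence-$2$ tips of the two end triangles), verify this extended path is the old path of the triangle-square-triangle diagram $D'$ built from these cells, and check that the map $D' \to D \to K$ exhibits this as a legitimate move — so the move applies.

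The main obstacle I anticipate is the careful case analysis at the transition vertices and at degenerate configurations: making sure that ``exactly two squares at every interior vertex, and a triangle-plus-square at each endpoint'' really does force the subcomplex to be a clean row of squares capped by triangles (no unexpected folding, no extra cells glued along the far sides, the two end triangles genuinely distinct from the squares), and that the resulting diagram embeds as a doubly-based disc diagram with the stated old path. Once the local picture is pinned down this is routine, but it requires attention to how the square path sits inside the possibly-singular ambient diagram $D$.
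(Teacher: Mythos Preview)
Your approach matches the paper's: bound each endpoint's curvature by $\frac{\pi}{6}$ (one triangle plus one square) and each non-exposed interior vertex's curvature by $0$, tally for the first assertion, and for the second deduce that every vertex must achieve its maximal curvature, so only the expected boundary cells are present and they form the triangle-square-triangle strip. The one step you leave implicit is why positivity of the \emph{sum} forces \emph{every} term to be maximal---this uses the discreteness of the angles (any single extra cell contributes at least $\frac{\pi}{3}$, dropping the total from at most $\frac{\pi}{3}$ to at most $0$), which is precisely what the paper packages as the ``unexpected angle'' argument.
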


\begin{proof}
  The endpoints of $\alpha$ have curvature at most $\frac{\pi}{6}$ (as
  they are incident with both a triangle and a square) and the
  interior vertices of $\alpha$ are either exposed with curvature
  $\frac{\pi}{2}$ or not exposed and nonpositively curved.  The first
  assertion simply tallies these bounds.  For the second, note that
  with no exposed vertices the cumulative curvature is at most
  $\frac{\pi}{3}$.  If any unexpected angle (i.e. one that is not from
  a triangle or square that contains an edge of the path) is incident
  with a vertex of $\alpha$ then its cumulative curvature drops at
  least to $0$.  And if no unexpected angles exist, $\alpha$ plus the
  triangle edges before and after form the old path of the
  triangle-square-triangle move whose $2$-cells are the cells of $D$
  containing these boundary edges.
\end{proof}

The key result we need is the following.

\begin{prop}[Moves and basepoints]\label{prop:exposed-moves}
  If $D$ is a doubly-based nonsingular $\cat(0)$ triangle-square disc
  diagram, then a move can be applied to either the old path or the
  new path.
\end{prop}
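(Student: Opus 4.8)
The plan is to assume that no move can be applied to $\alpha$ or to $\beta$ and to derive a contradiction with Corollary~\ref{cor:disc}, which guarantees that the boundary vertex curvatures of $D$ sum to at least $2\pi$. Write $p,q$ for the start and end vertices of $D$ and $\alpha,\beta$ for the old and new paths, each read from $p$ to $q$. (When $p=q$ the new path is trivial and only the cyclic old path $\alpha$ is relevant; the bookkeeping below then simplifies considerably, as noted at the end.) The goal is to push the total boundary curvature strictly below $2\pi$.

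The first step removes exposed vertices. By Lemma~\ref{lem:exposed-vertices}, an exposed vertex in the interior of $\alpha$ or of $\beta$ carries a triangle, square, or triangle-triangle move on the length-$2$ subpath centered at it, contrary to hypothesis; so every interior vertex of $\alpha$ or $\beta$ either is a transition between a triangle path and a square path (hence of curvature at most $\tfrac{\pi}{6}$) or has nonpositive curvature. The second step handles square paths. I would cut $\alpha$ into its maximal monochromatic runs $R_1,\dots,R_n$. Any square run $R_i$ with $2\le i\le n-1$ is flanked inside $\alpha$ by nonempty triangle runs, so it is an entire square path of the boundary cycle whose \emph{slight extension} stays inside $\alpha$, and it has no exposed vertices (its interior vertices are not exposed and its endpoints are transitions). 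Lemma~\ref{lem:square-paths} then forces its cumulative curvature to be nonpositive, since otherwise a triangle-square-triangle move would apply to $\alpha$.

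Next I would estimate the interior curvature $S_\alpha$ of $\alpha$, the sum of the curvatures of its interior vertices. Write $S_\alpha$ as the sum of the run-interior curvatures (each $\le 0$) plus the curvatures of the transition vertices between consecutive runs, and charge each transition to the unique square run it borders. Each interior square run together with its two flanking transitions contributes exactly its cumulative curvature, which is $\le 0$, so the only surviving positive contributions come from whichever of $R_1,R_n$ is a square run, each carrying at most one transition. Thus $S_\alpha\le\tfrac{\pi}{6}\,(d_\alpha(p)+d_\alpha(q))$, where $d_\alpha(p),d_\alpha(q)\in\{0,1\}$ record whether the first, respectively last, edge of $\alpha$ is a square edge; the same bound holds for $\beta$. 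Put $c(p)=d_\alpha(p)+d_\beta(p)$, the number of square edges among the two boundary edges at $p$, and likewise $c(q)$. Inspecting the cells at $p$ gives $\tfrac{\pi}{6}c(p)+\operatorname{curv}(p)\le\tfrac{5\pi}{6}$: if both boundary edges at $p$ are square edges there is a square corner at $p$, so $\operatorname{curv}(p)\le\tfrac{\pi}{2}$; if exactly one is, there are a square corner and a triangle corner, so $\operatorname{curv}(p)\le\tfrac{\pi}{6}$; and if neither is, $\operatorname{curv}(p)\le\tfrac{2\pi}{3}$. Combining the bounds for $\alpha$, for $\beta$, and at $p$ and $q$,
\[
2\pi\ \le\ S_\alpha+S_\beta+\operatorname{curv}(p)+\operatorname{curv}(q)\ \le\ \tfrac{5\pi}{6}+\tfrac{5\pi}{6}\ =\ \tfrac{5\pi}{3},
\]
the desired contradiction. (When $p=q$: no boundary vertex other than $p$ is exposed, $p$ itself is handled directly by Lemma~\ref{lem:exposed-vertices}, every boundary square path has its extension inside $\alpha$ and hence nonpositive cumulative curvature, and the triangle-path interiors are nonpositive, so the boundary curvature is at most $0$ — again a contradiction.)

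The run-decomposition bookkeeping and the local cell count at the basepoints are routine. The step I expect to be the real obstacle is ensuring that the square paths to which Lemma~\ref{lem:square-paths} is applied are genuinely interior to $\alpha$ or $\beta$: a square path straddling $p$ or $q$ need not yield a move on either path. Relatedly, the basepoint curvatures $\operatorname{curv}(p),\operatorname{curv}(q)$ must be absorbed through the coupling with $c(p),c(q)$ rather than by the crude bound $\tfrac{2\pi}{3}$ apiece, which by itself would only give a total of $2\pi$ and no contradiction.
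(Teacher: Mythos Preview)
Your argument is correct and follows the same overall strategy as the paper: assume no move is available, invoke Lemma~\ref{lem:exposed-vertices} to eliminate exposed vertices away from the basepoints, invoke Lemma~\ref{lem:square-paths} to force nonpositive cumulative curvature on square paths away from the basepoints, and contradict Corollary~\ref{cor:disc}. The difference lies purely in the bookkeeping of the residual curvature near $p$ and $q$. The paper carries out a four-way case split according to whether each basepoint lies in a square path, bounding the positive summands separately in each configuration. You instead decompose $\alpha$ and $\beta$ into monochromatic runs, observe that only the outermost square runs can contribute positively (at most $\tfrac{\pi}{6}$ each, from a single transition vertex), and then absorb these contributions together with the basepoint curvature via the single local inequality $\tfrac{\pi}{6}c(\cdot)+\operatorname{curv}(\cdot)\le\tfrac{5\pi}{6}$. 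This packaging is cleaner: it gives the uniform bound $\tfrac{5\pi}{3}$ in one line and makes transparent exactly why the crude estimate $\operatorname{curv}(\cdot)\le\tfrac{2\pi}{3}$ is insufficient, which the paper's case analysis obscures. The paper's version, on the other hand, displays the extremal configurations more explicitly.

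One small remark on your $p=q$ paragraph: the phrase ``$p$ itself is handled directly by Lemma~\ref{lem:exposed-vertices}'' is slightly misleading, since a move centered at $p$ would straddle the start and end of the closed path $\alpha$ and is not obviously a directed subpath in the sense of Definition~\ref{def:moves}. But this does no damage: even allowing $\operatorname{curv}(p)\le\tfrac{2\pi}{3}$, your run decomposition (applied to the single cyclic path $\alpha$, split at $p$) gives total boundary curvature at most $\tfrac{5\pi}{6}<2\pi$, so the contradiction survives without ever needing a move at $p$.
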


\begin{proof}
  If there is an exposed vertex distinct from $u$ and $v$, then
  Lemma~\ref{lem:exposed-vertices} produces the required move.  Note
  that this case already covers all diagrams without both triangle
  edges and square edges.  With no transitions between triangle paths
  and square paths, every positively curved boundary vertex is
  exposed, its curvature is at most $\frac{2\pi}{3}$ and, since the
  total is at least $2\pi$ (Corollary~\ref{cor:disc}), $D$ contains at
  least $3$ exposed vertices.  

  Next, if no exposed vertex distinct from $u$ and $v$ exists but
  there is a square path with positive cumulative curvature disjoint
  from $u$ and $v$, then Lemma~\ref{lem:square-paths} produces the
  required move.  This second case covers all remaining diagrams, a
  fact we prove by contradiction.

  Suppose $D$ is a diagram satisfying neither condition.  Because $D$
  contains both triangle edges and square edges, its boundary can be
  divided into triangle paths and square paths.  Since there are no
  exposed vertices distinct from $u$ and $v$, every other vertex in
  the interior of a triangle path is nonpositively curved.  By
  Corollary~\ref{cor:disc} the cumulative curvatures of the square
  paths plus the curvatures of the vertices in the interior of the
  triangle paths must be at least $2\pi$, but the only positive
  summands come from $u$, $v$ and/or the square paths that contains
  $u$, $v$ or both.  Each possible configuration falls short.  For
  example, if neither $u$ nor $v$ belong a square path, $u$ and $v$
  each contribute at most $\frac{2\pi}{3}$ and there are no other
  positive summands.  Similarly, if one basepoint belongs to a square
  path, but the other does not, then the square path containing the
  basepoint has cumulative curvature at most
  $\frac{\pi}{3}+\frac{\pi}{2}$ (Lemma~\ref{lem:square-paths}), the
  other basepoint contributes at most $\frac{2\pi}{3}$ and there are
  no other positive summands.  Next, if $u$ and $v$ belong the same
  square path, its cumulative curvature is at most
  $\frac{\pi}{3}+2\frac{\pi}{2}$ and there are no other positive
  summands.  Finally, if $u$ and $v$ belong the distinct square paths,
  their cumulative curvatures are each at most
  $\frac{\pi}{3}+\frac{\pi}{2}$ and there are no other positive
  summands.  In each case the total is less than $2\pi$,
  contradiction.
\end{proof}

\begin{thm}[Straightening paths]\label{thm:straightening} 
  Let $K$ be a $\cat(0)$ triangle-square complex and let $\alpha$ and
  $\beta$ be combinatorial paths in $K$ that start at $u$ and end at
  $v$, two not necessarily distinct vertices in $K$.  When $\beta$ is
  a (possibly trivial) geodesic, $\alpha$ can be transformed into
  $\beta$ by a finite sequence of length-preserving and
  length-reducing moves.  When both paths are geodesics, only
  length-preserving moves are needed.
\end{thm}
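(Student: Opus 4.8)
The plan is to use Proposition~\ref{prop:exposed-moves} together with Van Kampen's Lemma as a potential function argument. Given $\alpha$ and $\beta$ with common endpoints $u,v$, first form the closed loop $\alpha \bar\beta$ (traverse $\alpha$ from $u$ to $v$, then $\beta$ backwards). By Corollary~\ref{cor:vk} this loop bounds a \pe $\cat(0)$ disc diagram $D \to K$. If $D$ is singular, decompose it at its cut points into nonsingular pieces; I would argue that one can still extract a nonsingular doubly-based subdiagram whose old path is a subpath of $\alpha$ and whose new path is a subpath of $\beta$ (or vice versa), so that Proposition~\ref{prop:exposed-moves} applies. Concretely, view $D$ as doubly-based with start vertex $u$ and end vertex $v$: the old path is then $\alpha$ and the new path is $\beta$ (matching the clockwise/counter-clockwise convention, possibly after reorienting). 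When $u \neq v$ and $D$ is nonsingular, Proposition~\ref{prop:exposed-moves} hands us a move applicable to $\alpha$ or to $\beta$.

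The key point is that a move applied to $\beta$ is impossible when $\beta$ is a geodesic: every move is length-preserving or length-reducing and the new path of each move is a subpath-replacement that is at most as long as the old path, so applying a move to $\beta$ would produce a path from $u$ to $v$ no longer than $\beta$; but the square, triangle-triangle, and triangle-square-triangle moves all strictly shorten \emph{some} loop or introduce a distinct path of equal length, and — more to the point — the resulting path would still connect $\beta$'s endpoints while $\beta$ is already a geodesic, so a length-reducing move is outright impossible and a length-preserving move only reshuffles $\beta$ without changing its length. Hence when $\beta$ is geodesic and $\alpha \neq \beta$, the move produced by the proposition must apply to $\alpha$; the triangle move and trivial move strictly reduce $|\alpha|$, and I claim the area of a reduced diagram filling $\alpha\bar\beta$ strictly drops under the length-preserving moves as well (pushing $\alpha$ across a square, a pair of triangles, or a row of squares-plus-triangles removes those $2$-cells from any reduced filling). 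Using the lexicographic potential $(|\alpha|, \text{Area}(D))$ — which is a well-ordered quantity since both are non-negative integers — this process terminates, and it can only terminate when $\alpha = \beta$. That proves the first assertion. For the second, when $\alpha$ is also a geodesic, every move in the sequence is applied to a geodesic, hence must be length-preserving (a length-reducing move would contradict $\alpha$ being geodesic at that stage, since all intermediate paths remain geodesics of the original length).

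I need to handle two loose ends. First, the degenerate case $u = v$: here $\alpha$ is a combinatorial loop and $\beta$ is trivial, the old path is the whole boundary cycle of $D$ and the new path is the constant path; Proposition~\ref{prop:exposed-moves} still produces a move, which must be applicable to the old path $\alpha$ (the new path is trivial and admits no move), and the same potential argument drives $|\alpha|$ to $0$. Second, the singular-diagram issue: when $D$ has cut points, I would pass to the doubly-based nonsingular subdiagram $D'$ containing the edge of $\alpha$ immediately after $u$ (or containing $u=v$), whose old path is an initial segment of $\alpha$ up to the first cut point; a move on $D'$ is a move on an initial subpath of $\alpha$, which suffices. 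Alternatively, one can observe that a reduced disc diagram over a $\cat(0)$ complex filling $\alpha\bar\beta$ with $\beta$ geodesic is automatically nonsingular — this is the cleanest route if it holds, and I suspect it does because a cut point on the $\beta$ side would let one shortcut $\beta$.

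The main obstacle I anticipate is the bookkeeping around singular diagrams and the precise claim that the length-preserving moves strictly decrease the area of a reduced filling: one must be careful that applying such a move to $\alpha$ and then re-reducing the diagram genuinely removes cells rather than merely relocating them. Making the potential function strictly monotone — rather than just non-increasing — under \emph{all} four nontrivial move types is the crux, and it is where I would spend the most care. A fallback is to order by area first and length second, or to track the number of $2$-cells on the $\alpha$-side of a canonical filling, choosing whichever quantity visibly strictly decreases under each of the five move types.
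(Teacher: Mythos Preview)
Your central claim --- that when $\beta$ is a geodesic the move produced by Proposition~\ref{prop:exposed-moves} must land on $\alpha$ --- is false, and this is the genuine gap. Length-preserving moves apply perfectly well to geodesics: if $\beta$ passes through two adjacent sides of a square, the square move replaces that subpath by the other two sides, yielding a different geodesic of the same length. Proposition~\ref{prop:exposed-moves} only promises \emph{some} move on \emph{some} side of the diagram; nothing prevents it from being a square, triangle-triangle, or triangle-square-triangle move on $\beta$. When that happens your potential $(|\alpha|,\text{Area})$ does drop, but you are now comparing $\alpha$ to a \emph{new} geodesic $\beta'$, and you have made no progress toward turning $\alpha$ into the original $\beta$.

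The paper repairs exactly this point with a reordering trick you are missing. It allows moves on both sides, so that the induction (on length, then area) terminates with $\alpha$ and $\beta$ each transformed through intermediate paths until they coincide. Since $\beta$ is geodesic, every move applied on the $\beta$-side is length-preserving and hence \emph{reversible} as a move of the same type. One then reorders the sequence: perform all $\alpha$-side moves first, reaching the common path, and then perform the $\beta$-side moves in reverse order and reversed direction to arrive at $\beta$. This yields the desired sequence of moves carrying $\alpha$ to $\beta$. Your treatment of singular diagrams is also shakier than the paper's: rather than extracting a nonsingular block (whose ``new path'' need not be a subpath of $\beta$ at all --- it could be a later arc of $\alpha$), the paper disposes of singularities up front via easy cases (backtracks, self-intersections of $\alpha$, and shared internal vertices of $\alpha$ and $\beta$), after which the filling diagram is forced to be nonsingular.
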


\begin{proof}
  The proof proceeds by induction primarily on length and secondarily
  on area.  More specifically, we show that for every coterminous pair
  of paths $\alpha$ and $\beta$, we can either apply a length-reducing
  move to $\alpha$, or a length-preserving move to $\alpha$ or $\beta$
  so that the new pair of paths bounds a disc diagram with strictly
  fewer $2$-cells than the smallest diagram that $\alpha\beta^{-1}$
  bounds.  First the easy cases. If $\alpha=\beta$ there is nothing to
  prove.  If $\alpha$ is not immersed then it can be shortened by a
  trivial move.  If $\alpha$ is immersed but not embedded then a
  closed proper subpath $\alpha'$ can be paired with a trivial path
  satisfying the induction hypothesis.  (The path $\beta$ is always
  immersed and embedded because it is geodesic.)  Similarly, if
  $\alpha$ and $\beta$ share internal vertices, then we can split
  $\alpha$ into $\alpha_1 \alpha_2$ and $\beta$ into $\beta_1 \beta_2$
  where $\alpha_i$ and $\beta_i$ are shorter paths satisfying the
  induction hypothesis.

  In the remaining case, $\alpha \beta^{-1}$ bounds a $\cat(0)$ disc
  diagram $D$ (Corollary~\ref{cor:vk}) and the restrictions on
  $\alpha$ and $\beta$ force $D$ to be nonsingular.  By
  Proposition~\ref{prop:exposed-moves} there is either a
  length-reducing or length-preserving diagram-shrinking move that
  avoids the distinguished vertices $u$ and $v$.  This is sufficient
  since length-preserving moves can only occur a bounded number of
  times before one of the easy cases or a length-reducing move occurs.
  Finally, since $\beta$ is a geodesic, the modifications to $\beta$
  and its descendents only involve length-preserving moves.  Thus, the
  modifications can be reordered so that all the alterations to paths
  descended from $\alpha$ happen first, followed by the alterations to
  the paths descended from $\beta$ in reverse (and in reverse order)
  to complete the process.
\end{proof}

%%%%%%%%%%%%%%%%%%%%%%%%%%%%%%%%%%%%%%%%
\section{Intervals}\label{sec:intervals}
%%%%%%%%%%%%%%%%%%%%%%%%%%%%%%%%%%%%%%%%

We now shift our attention from a single geodesic to the global
structure of all geodesics connecting fixed vertices in a $\cat(0)$
triangle-square complex.

\begin{figure}
  \includegraphics{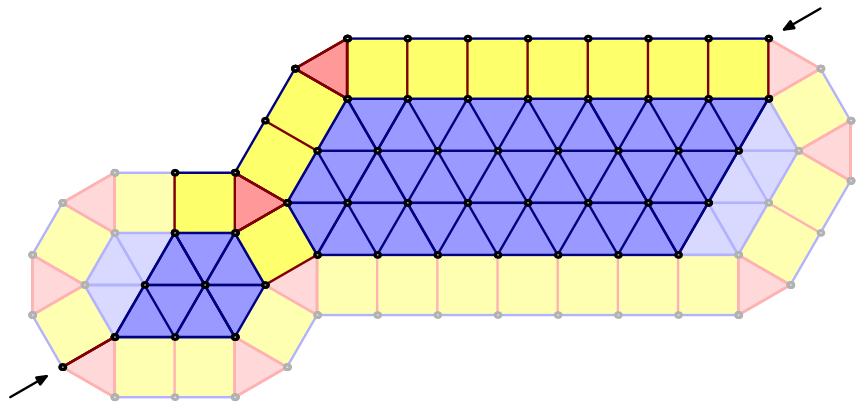}
  \caption{A doubly-based $\cat(0)$ disc diagram with the
    corresponding interval heavily shaded.\label{fig:interval}}
\end{figure}

\begin{defn}[Intervals]
  Let $u$ and $v$ be fixed vertices in a $\cat(0)$ triangle-square
  complex $K$ that are distance $n$ apart in the $1$-skeleton metric.
  We say a vertex $w$ is \emph{between $u$ and $v$} when $w$ lies on a
  combinatorial geodesic connecting $u$ to $v$ or, equivalently, when
  $d(u,w) + d(w,v) = d(u,v)$.  The \emph{interval} between $u$ and
  $v$, denoted $K[u,v]$, is the largest subcomplex of $K$ with all
  vertices between $u$ and $v$, i.e. the \emph{full subcomplex} on
  this vertex set.  More explicitly, it includes all vertices between
  $u$ and $v$, all of the edges of $K$ connecting them, and every
  triangle or square in $K$ with all of its vertices in this set.
  Note that $K[u,v]$ contains every geodesic from $u$ to $v$ as well
  as every length-preserving move that converts one such geodesic into
  another one.  An example of an interval is shown in
  Figure~\ref{fig:interval}.
\end{defn}

\begin{rem}[Planar and locally Euclidean]
  Examples that are planar and locally Euclidean can be illustrated
  with undistorted metrics but neither of these properties hold in
  general for intervals in $\cat(0)$ triangle-square complexes.  See
  Figure~\ref{fig:bad-intervals}.  This is one place where the
  ``mixed'' case is very different from the ``pure'' cases.  Gersten
  and Short proved, at least implicitly, that intervals in $\cat(0)$
  square complexes can always be identified with subcomplexes of the
  standard square tiling of $\R^2$ and that intervals in $\cat(0)$
  triangle complexes can always be identified with subcomplexes of the
  standard triangle tiling of $\R^2$.  In particular they are always
  planar and locally Euclidean.  In fact, B.~T.~Williams proved in his
  dissertation that an analogous result holds for $n$-dimensional cube
  complexes and the standard cubing of $\R^n$ \cite{Wil98}.
\end{rem}

\begin{figure}[h]
  \begin{tabular}{ccc}
  \begin{tabular}{c}\includegraphics{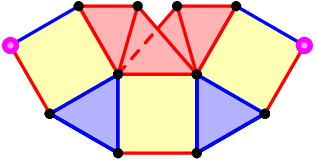}\end{tabular} & 
  \hspace*{2em} &
  \begin{tabular}{c}\includegraphics{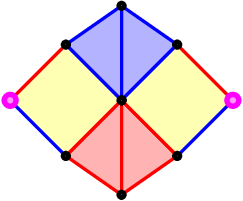}\end{tabular} 
  \end{tabular}
  \caption{An interval that is not planar and another that is not locally
    Euclidean.\label{fig:bad-intervals}}
\end{figure}

Despite these complications, an interval in a $\cat(0)$
triangle-square complex is still a highly structured object.  As we
now show, it is a simply-connected $\cat(0)$ subcomplex in which every
edge of its $1$-skeleton belongs to a either a geodesic or a move.  We
begin with a definition.

\begin{defn}[Vertical and horizontal edges]\label{def:v/h-edges}
  Let $K[u,v]$ be an interval in a $\cat(0)$ triangle-square complex
  $K$ and let $x$ and $y$ be vertices in $K[u,v]$ connected by an edge
  $e$.  We call $e$ a \emph{horizontal edge} of $K[u,v]$ when $x$ and
  $y$ are the same distance from $u$ (and consequently the same
  distance from $v$) and a \emph{vertical edge} otherwise.
\end{defn}

\begin{lem}[Edges in intervals]\label{lem:interval-edge}
  Let $K[u,v]$ be interval in a $\cat(0)$ triangle-square complex $K$.
  Every edge in the interval either lies on a geodesic from $u$ to $v$
  or is part of a move between two such geodesics.  More specifically,
  vertical edges lies on geodesics and horizontal edges are contained
  in moves.
\end{lem}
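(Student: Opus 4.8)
The plan is to prove the two ``more specific'' assertions separately, since the general statement follows immediately from them. For a vertical edge $e=xy$ with $d(u,x)=i$ and $d(u,y)=i+1$, I would concatenate a geodesic from $u$ to $x$, the edge $e$, and a geodesic from $y$ to $v$. Since $x$ and $y$ both lie between $u$ and $v$, we have $d(u,x)+d(x,v)=n$ and $d(u,y)+d(y,v)=n$, so $d(x,v)=n-i$ and $d(y,v)=n-i-1$; the concatenated path has length $i+1+(n-i-1)=n=d(u,v)$, hence is a geodesic through $e$. This part is routine.

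The horizontal case is the heart of the lemma. Let $e=xy$ be a horizontal edge, so $d(u,x)=d(u,y)=i$ and $d(v,x)=d(v,y)=n-i$. First I would build a closed loop: pick a geodesic $\sigma$ from $u$ to $x$, a geodesic $\tau$ from $y$ to $u$, and form the loop $\sigma \cdot e \cdot \tau$ of length $2i+1$ (odd, so it is not null-homotopic-length-wise but it is a genuine combinatorial loop since $K$ is simply connected). Actually the cleaner approach is to fix geodesics $\sigma_u$ from $u$ to $x$ and $\sigma_u'$ from $u$ to $y$, together with geodesics $\sigma_v$ from $x$ to $v$ and $\sigma_v'$ from $y$ to $v$, all of the appropriate lengths, and consider the doubly-based disc diagram $D$ that $\sigma_u \cdot \sigma_v$ and $\sigma_u' \cdot \sigma_v'$ bound (via Corollary~\ref{cor:vk}), where $u$ and $v$ are the distinguished vertices. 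Apply Theorem~\ref{thm:straightening}: since both boundary paths are already geodesics, they differ by a sequence of length-preserving moves. The claim is that by choosing $\sigma_u,\sigma_v,\sigma_u',\sigma_v'$ carefully — specifically so that $\sigma_u$ and $\sigma_u'$ agree as long as possible before separating, and likewise $\sigma_v,\sigma_v'$ — the edge $e$ appears on the boundary of a single cell whose opposite side is shared between the two geodesics, which is precisely the picture of a move.

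The main obstacle, and where I would spend the most care, is showing that $e$ actually occurs inside one of the length-preserving moves produced by the straightening process rather than merely sitting somewhere in the interval. The right way to handle this is local: consider the link of $x$ (or of the midpoint region around $e$). Since $x$ lies between $u$ and $v$, every geodesic from $u$ to $v$ through $x$ enters and leaves $x$ in ``descending'' and ``ascending'' directions, and the edge $e$ is horizontal, meaning it is neither. I would argue that the $2$-cells of $K[u,v]$ containing $e$ must be a triangle or a row of squares whose two non-$e$ sides are vertical edges lying on geodesics (by the vertical case already proved), since a horizontal edge cannot be the ``short'' side by itself — a single horizontal edge with both endpoints at distance $i$ cannot be a local geodesic segment from $u$ to $v$, so it only occurs as the top of a move diagram. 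Combining this local picture with Lemma~\ref{lem:exposed-vertices} and Lemma~\ref{lem:square-paths} — applied to a minimal disc diagram filling a loop through $e$ — pins down that $e$ lies on the old or new path of a triangle move, square move, triangle-triangle move, or triangle-square-triangle move between two geodesics, completing the proof.
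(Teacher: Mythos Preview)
Your treatment of the vertical case is correct and essentially identical to the paper's.

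The horizontal case, however, has a genuine gap, and you have in fact identified it yourself: nothing in your argument forces the edge $e$ to appear in one of the moves produced by straightening. The two paths $\sigma_u\cdot\sigma_v$ and $\sigma_u'\cdot\sigma_v'$ are geodesics from $u$ to $v$, and $e$, being horizontal, lies on neither of them. When Theorem~\ref{thm:straightening} converts one into the other by length-preserving moves, those moves sweep across some disc diagram, but there is no reason that disc must contain $e$; the sequence of moves could pass entirely on one side of $e$. Your subsequent local analysis (``the $2$-cells of $K[u,v]$ containing $e$ must be a triangle or a row of squares\ldots'') presupposes that $e$ is contained in some $2$-cell of $K[u,v]$ at all, which is not a priori clear, and even granting that, it does not produce a complete move: you would still need triangles capping both ends of a square strip, and nothing in Lemmas~\ref{lem:exposed-vertices} or~\ref{lem:square-paths} supplies them.

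The paper closes this gap with a trick you are missing: it \emph{attaches a new triangle} to $K$ along $e$, with a new third vertex $z$. The augmented complex is still $\cat(0)$, and now geodesics from $u$ to $z$ are forced to end by passing through $x$ or through $y$ and then across the new triangle. Straightening between an $\alpha_1$ (through $x$) and a $\beta_1$ (through $y$) must at some stage apply a move that switches from the $x$-side to the $y$-side; since the only $2$-cell at $z$ is the new triangle, that move is necessarily a (possibly degenerate) triangle-square-triangle move whose final triangle is the new one. Removing that final triangle leaves a half-move in the original $K[u,v]$ terminating at $e$ and pointing toward $u$. Repeating the construction toward $v$ and gluing the two halves yields a full move in $K[u,v]$ containing $e$. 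This auxiliary-vertex device is the missing idea.
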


\begin{proof}
  Let $e$ be an edge connecting $x$ and $y$ in $K[u,v]$.  Because $x$
  and $y$ are in $K[u,v]$ they lie on geodesics $\alpha$ and $\beta$
  and can be viewed as $x=\alpha(i)$ and $y=\beta(j)$ for integers $i$
  and $j$.  These integers also represent the distance $x$ and $y$ are
  from $u$, respectively.  The difference between $i$ and $j$ is at
  most $1$ because otherwise, we could use one end of $\alpha$, $e$
  and the other end of $\beta$ to construct a new path shorter than
  the geodesics $\alpha$ and $\beta$.  As it is, if $i+1=j$, the path
  that follows $\alpha$ from $u$ to $x=\alpha(i)$, crossses $e$, and
  then follows $\beta$ from $y=\beta(j)$ to $v$ is a geodesic
  containing $e$.
  
  When $e$ is horizontal, add a new triangle to $K$ along $e$ with
  third vertex $z$.  It is easy to check that the augmented complex
  remains $\cat(0)$ and that there are geodesics $\alpha_1$ and
  $\beta_1$ from $u$ to $z$ through $x$ and $y$, respectively.  By
  Theorem~\ref{thm:straightening} there is a finite sequence of
  length-preserving moves that transforms $\alpha_1$ to $\beta_1$
  inside the augmented complex.  Since the only $2$-cell that contains
  $z$ is a triangle any move that converts a geodesic through $x$ to a
  geodesic through $y$ must be a (possibly degenerate)
  triangle-square-triangle move.  In particular, $K[u,v]$ contains a
  triangle-square-triangle move minus its final triangle pointing
  towards $u$ and ending at $e$.  After repeating this argument
  focusing on the beginning of the interval $K[z,v]$, we can merge the
  two uncapped triangle-square-triangle moves containing $e$, one
  pointing towards $u$ and the other towards $v$, into a
  triangle-square-triangle move inside $K[u,v]$ that contains $e$.
\end{proof}

\begin{thm}[Intervals are $\cat(0)$]\label{thm:intervals-cat0}
  Intervals in $\cat(0)$ triangle-square complexes are
  simply-connected and thus $\cat(0)$.
\end{thm}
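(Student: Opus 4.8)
The plan is to establish simple-connectivity directly, since the $\cat(0)$ conclusion then follows from Corollary~\ref{cor:char-npc} together with Proposition~\ref{prop:char-cat}: the interval $K[u,v]$ is a subcomplex of the $\cat(0)$ (hence nonpositively curved) complex $K$, its vertex links immerse into those of $K$, so $K[u,v]$ is nonpositively curved, and a nonpositively curved simply-connected \pe complex is $\cat(0)$. (Connectedness is automatic: every vertex of $K[u,v]$ lies on a geodesic from $u$ to $v$, and all these geodesics pass through $u$.) So the real content is showing $\pi_1(K[u,v]) = 1$.

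To prove simple-connectivity, I would take an arbitrary combinatorial loop $\delta$ in $K[u,v]$ and show it is null-homotopic \emph{inside} $K[u,v]$. Since $K$ itself is simply connected, $\delta$ bounds a reduced disc diagram $f : D \to K$ by Van Kampen's Lemma, and by Corollary~\ref{cor:vk} we may take $D$ to be \pe and $\cat(0)$. The goal is to show the image of $f$ already lies in $K[u,v]$, i.e. that every vertex of $D$ maps to a vertex between $u$ and $v$. The natural tool is Theorem~\ref{thm:straightening}: I would pick a basepoint vertex $p$ on $\delta$, let $x = f(p)$, and push $\delta$ (as the old path of a doubly-based diagram, based at $p$) toward a geodesic. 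More precisely, I expect to induct on the number of $2$-cells of $D$, peeling off exposed vertices or square paths via Proposition~\ref{prop:exposed-moves} exactly as in the proof of Theorem~\ref{thm:straightening}. The point of doing this is that each basic move — triangle, square, triangle-triangle, triangle-square-triangle, or trivial — replaces a subpath by a coterminous subpath whose vertices lie within one of the move's small model diagrams, and one checks cell-by-cell that if the endpoints of the move are between $u$ and $v$ then so are all the vertices introduced by the move. Lemma~\ref{lem:interval-edge} is the template for this kind of local verification: it already shows horizontal edges sit inside triangle-square-triangle moves and vertical edges sit on geodesics, so the edges and $2$-cells produced during the straightening all remain in the full subcomplex $K[u,v]$.

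Assembling this: given the loop $\delta$ in $K[u,v]$, express it via its filling diagram $D$; by repeatedly applying Proposition~\ref{prop:exposed-moves} to successively smaller sub-diagrams one obtains a sequence of moves that homotopes $\delta$ down to a constant loop, and each intermediate path, together with the $2$-cells swept across, lies in $K[u,v]$ because the model diagrams of the five basic moves have all their vertices at distance-from-$u$ values interpolating between those of the move's endpoints — and the endpoints, lying on the current path, are inductively between $u$ and $v$. Hence the whole null-homotopy takes place in $K[u,v]$, so $\pi_1(K[u,v])=1$.

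The main obstacle I anticipate is the bookkeeping in the inductive step: one must be careful that pushing the loop $\delta$ across $D$ via the exposed-vertex / square-path moves keeps \emph{both} the endpoints of each applied move at vertices between $u$ and $v$, so that the ``all new vertices of the move are between $u$ and $v$'' claim can be invoked. This is where the geometry of a single triangle, square, pair of triangles, and row of squares with triangular caps has to be examined explicitly — essentially the same case analysis as in Lemma~\ref{lem:interval-edge}, but now tracking the distance-to-$u$ function across every vertex of each model diagram rather than just along a single edge. Once that local fact is in hand, the global statement is a straightforward induction mirroring the proof of Theorem~\ref{thm:straightening}.
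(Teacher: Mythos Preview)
Your reduction to simple-connectivity is fine, but the core step has a real gap. You fill $\delta$ by a reduced disc $D$ \emph{in $K$} and then peel off cells via Proposition~\ref{prop:exposed-moves}, asserting at each stage that the newly uncovered vertices lie in $K[u,v]$. That assertion is precisely what is at issue, and your justification does not establish it. For a square move whose old path runs $a$--$b$--$c$ and whose fourth corner is $d$, all you know is that $d$ is a vertex of $K$ adjacent to $a$ and $c$; nothing in that local picture forces $d(u,d)+d(d,v)=d(u,v)$. Your ``interpolation'' remark concerns positions in the abstract model diagram, not combinatorial distances in $K$, and even if $d(u,d)$ did lie between $d(u,a)$ and $d(u,c)$ that alone would not place $d$ on a geodesic from $u$ to $v$. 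Lemma~\ref{lem:interval-edge} is not the template you want: it analyses edges already known to lie in $K[u,v]$, and it does so by a global construction (adjoining an auxiliary triangle and invoking Theorem~\ref{thm:straightening}), not by a cell-by-cell check of a model diagram. The same difficulty recurs, with more new vertices to control, for triangle-triangle and triangle-square-triangle moves. Once a single such step exits $K[u,v]$, your induction hypothesis is lost.

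The paper never touches a filling disc in $K$. Instead it cones the loop to the endpoints: Lemma~\ref{lem:interval-edge} places each edge of $\gamma$ either on a geodesic from $u$ to $v$ or inside a length-preserving move between two such geodesics, and Theorem~\ref{thm:straightening} interpolates between consecutive geodesics by further length-preserving moves. Every vertex appearing in this construction lies on some $u$-to-$v$ geodesic and is therefore in $K[u,v]$ by definition. Assembling these pieces gives a map $\sph^2\to K[u,v]$ with $u$ and $v$ at the poles and $\gamma$ on the equator; one hemisphere is the desired null-homotopy. Because the homotopy is built entirely from objects intrinsic to the interval, the question of whether an extrinsic disc stays inside $K[u,v]$ never arises.
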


\begin{proof}
  Let $K[u,v]$ be an interval in a $\cat(0)$ triangle-square complex
  $K$.  As a subcomplex of a $\cat(0)$ \pe $2$-complex, $K[u,v]$ is
  nonpositively curved by Corollary~\ref{cor:char-npc}.  Once we
  establish that it is simply-connected, $\cat(0)$ follows by
  Proposition~\ref{prop:char-cat}.
  Let $\gamma$ be a closed combinatorial path in $K[u,v]$.  Using
  Lemma~\ref{lem:interval-edge} we can embed each edge in $\gamma$ in
  either a geodesic from $u$ to $v$ or a move connecting two such
  geodesics.  Theorem~\ref{thm:straightening} can then be used to fill
  in the gaps between these geodesics and the geodesics on either side
  of these moves in such a way that there is a map $\sph^2 \to K[u,v]$
  sending the south pole to $u$, the north pole to $v$, the equator to
  $\gamma$ (with constant maps inserted between the edges), longitude
  lines to geodesics and lunes between the adjacent longitude lines
  alternately to the diagrams provided by
  Theorem~\ref{thm:straightening} and the geodesic or move containing
  a particular edge of $\gamma$.  The image of the southern hemisphere
  under this map shows that $\gamma$ is null-homotopic and $K[u,v]$ is
  simply-connected.
\end{proof}

%%%%%%%%%%%%%%%%%%%%%%%%%%%%%%%%%%%%%%%%%%%%%%%%%%%%%%%%%%
\section{Gersten-Short geodesics}\label{sec:gs-geos}
%%%%%%%%%%%%%%%%%%%%%%%%%%%%%%%%%%%%%%%%%%%%%%%%%%%%%%%%%%

In this section we define a collection of paths in a $\cat(0)$
triangle-square complex that reduces to the paths chosen by Gersten
and Short in $\cat(0)$ triangle complexes and in $\cat(0)$ square
complexes.  The first step is to show that all geodesics from $u$ to
$v$ either contain a common first edge or there is a unique nontrivial
move in the interval starting at $u$.

\begin{prop}[Unique $2$-cell]\label{prop:unique-cell}
  If both the old path and the new path of a doubly-based nonsingular
  $\cat(0)$ triangle-square disc diagram $D$ are geodesics, then the
  start vertex of $D$ lies in the boundary of a unique $2$-cell.
\end{prop}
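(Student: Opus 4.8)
The plan is to argue by contradiction. Suppose the start vertex $u$ of $D$ lies in the boundary of two or more $2$-cells; the goal is to show this is incompatible with both the old path $\alpha$ and the new path $\beta$ being geodesics.

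First I would fix notation and record the easy consequences. Since $D$ is nonsingular, $u$ has exactly two boundary edges, namely the first edge $e_\alpha$ of $\alpha$ and the first edge $e_\beta$ of $\beta$, and the $2$-cells at $u$ form a fan $C_1,\dots,C_m$ with $m\ge 2$, ordered so that $e_\alpha$ is an edge of $C_1$ and $e_\beta$ is an edge of $C_m$; consecutive cells $C_i$ and $C_{i+1}$ share an interior edge $f_i$ incident to $u$, so the edges at $u$ are exactly $e_\alpha,e_\beta,f_1,\dots,f_{m-1}$. Writing $n=d(u,v)=\size{\alpha}=\size{\beta}$, the assumption that $\alpha$ and $\beta$ are geodesics has two useful consequences. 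No interior vertex of $\alpha$ or of $\beta$ can be incident with exactly one triangle, for then its two neighbours along the path would be the remaining two vertices of that triangle and hence joined by an edge, contradicting minimality; thus every interior vertex of $\alpha$ or $\beta$ has angle sum at least $\frac{\pi}{2}$, i.e.\ curvature at most $\frac{\pi}{2}$. By contrast, the angle sum at $u$ is at least $\frac{2\pi}{3}$, so the curvature at $u$ is at most $\frac{\pi}{3}$.

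The main step is to excise the fan and apply combinatorial Gauss--Bonnet. First note that $D$ must have a $2$-cell not incident with $u$: otherwise $D$ is the fan itself, and then a direct check --- comparing $\size{\alpha}=\size{\beta}=d(u,v)$ with the shortcuts through the $f_i$ --- shows $\alpha$ and $\beta$ cannot both be geodesics. Now delete from $D$ the $2$-cells $C_1,\dots,C_m$, together with the vertex $u$ and the edges $e_\alpha,e_\beta,f_1,\dots,f_{m-1}$, which no longer meet any $2$-cell; what remains is a nonempty disc diagram $D'$ whose boundary cycle is the outer arc $P$ of the old fan --- a combinatorial path of length $\ell=\#\{\text{triangular }C_i\}+2\,\#\{\text{square }C_i\}\ge 2$ running from the far endpoint of $e_\alpha$ to the far endpoint of $e_\beta$ --- together with the arcs $\alpha'$ and $\beta'$ obtained from $\alpha$ and $\beta$ by deleting their first edges. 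As a simply connected subcomplex of a $\cat(0)$ complex, $D'$ is $\cat(0)$ by Corollary~\ref{cor:char-npc} and Proposition~\ref{prop:char-cat}, and $\alpha'$, $\beta'$ are geodesics of length $n-1$ which, because $D$ is nonsingular, meet only at $v$. By Corollary~\ref{cor:disc} the boundary curvatures of $D'$ sum to at least $2\pi$, while the only positive contributors are $v$, the interior vertices of $\alpha'$ and of $\beta'$, and the vertices of $P$. Bounding $v$ by $\frac{2\pi}{3}$, each interior vertex of $\alpha'$ or $\beta'$ by the geodesic cap $\frac{\pi}{2}$, the curvature along $P$ by the estimate in the proof of Lemma~\ref{lem:square-paths}, and using nonsingularity of $D$ to keep the ``fourth corners'' of the square cells that realise the extremal $\frac{\pi}{2}$ caps off $\partial D'$ (so that they contribute nothing), one should be able to show the total is strictly less than $2\pi$ --- the desired contradiction.

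The step I expect to be the real obstacle is exactly this curvature bookkeeping along the two geodesic boundary arcs. A geodesic in a triangle-square complex can turn a square corner, and thus contribute $\frac{\pi}{2}$, at arbitrarily many interior vertices, so the bound cannot be read off an arc in isolation; it must be wrung from nonsingularity of $D$ together with the fact that the complementary arc is also geodesic, and this is where a case analysis --- whether $C_1$ and $C_m$ are triangles or squares, and whether the relevant vertices of the fan lie in the interior of $D$ or on $\partial D$ --- is unavoidable. Should this prove unwieldy, a natural alternative is an induction on the number of $2$-cells of $D$: apply the length-preserving move supplied by Proposition~\ref{prop:exposed-moves}, check that (after splitting off the sub-diagram containing $u$ at any newly created interior intersection point) the result is again a $\cat(0)$ nonsingular doubly-based diagram with two geodesic boundary paths, and trace the effect of the move on the fan at $u$.
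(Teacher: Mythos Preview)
Your primary approach --- excising the fan at $u$ and applying Gauss--Bonnet to the leftover diagram $D'$ --- has a genuine gap that you yourself flag but do not close. The problem is exactly the one you name: an interior vertex of a geodesic boundary arc can have curvature $\frac{\pi}{2}$ (a single square), and nothing in your setup bounds the \emph{number} of such vertices along $\alpha'$ or $\beta'$. Your remark about ``fourth corners'' does not help: even if the opposite corner of such a square lies in the interior of $D'$, the inequality in Corollary~\ref{cor:disc} has already absorbed all interior-vertex curvature in passing from the Gauss--Bonnet identity to the boundary sum, so you cannot offset it a second time. As stated, the excision approach needs an a~priori bound on the total positive curvature along a geodesic side of a nonsingular $\cat(0)$ triangle-square disc, and obtaining that is essentially as hard as the proposition itself.

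Your alternative --- induction via the moves of Proposition~\ref{prop:exposed-moves} --- is the right idea and is what the paper actually does, but you are missing the organizing insight. The paper picks a \emph{minimal} counterexample (minimizing first $d(u,v)$, then the number of $2$-cells). Minimality then forces every move produced by Lemmas~\ref{lem:exposed-vertices} and~\ref{lem:square-paths} to have $u$ as an endpoint: any other move, being length-preserving on a geodesic side, yields a smaller counterexample (either with fewer $2$-cells, or singular and hence containing a nonsingular counterexample with smaller $d(u,v)$). This single observation confines all positive boundary curvature to $u$, to $v$, to the square paths through them, and to at most two (possibly degenerate) triangle-square-triangle moves ending at $u$; a short case analysis on whether the boundary edges at $u$ are triangle edges or square edges then shows the total falls strictly below $2\pi$. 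You should restructure your argument around this minimality hypothesis rather than around the excision.
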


\begin{proof}
  Suppose not and select a counterexample that is minimal in the
  following sense.  First minimize the distance between the basepoints
  and then minimize the number of $2$-cells in $D$.  Any move that can
  be applied to either path in such a minimal counterexample must
  contain the vertex $u$ as an endpoint since applying a move not
  containing $u$ either produces a counterexample with fewer $2$-cells
  or a singular diagram containing a counterexample whose basepoints
  are closer together.

  If all boundary edges of $D$ are square edges, then every positively
  curved boundary vertex is exposed with curvature $\frac{\pi}{2}$.
  To reach $2\pi$ (Corollary~\ref{cor:disc}) there must be at least
  $4$ such vertices. Since $u$ itself is not exposed, there exists an
  exposed vertex distinct from $u$, $v$ and the vertices adjacent to
  $u$, but this leads to a square move contradicting the minimality of
  $D$ (Lemma~\ref{lem:exposed-vertices}).  Thus triangle edges exist
  in $D$.

  Next, add the boundary vertex curvatures of $D$ so that the
  curvature of each boundary vertex not touching a square edge is an
  individual summand and the curvature of vertices in square paths are
  collected together into cumulative curvature summands.  The total is
  at least $2\pi$ (Corollary~\ref{cor:disc}), but the only positive
  summands come from $u$ and $v$ (and/or the square paths that contain
  them) and from moves that contain $u$ as an endpoint since other
  positive terms lead to moves that contradict the minimality of $D$
  (Lemmas~\ref{lem:exposed-vertices} and~\ref{lem:square-paths}).

  When both boundary edges touching $u$ are triangle edges, the
  curvature of $u$ is at most $\frac{\pi}{3}$ and any move with $u$ as
  an endpoint is a (possibly degenerate) triangle-square-triangle move
  with cumulative curvature at most $\frac{\pi}{3}$.  Note that there
  are at most two such moves, one in each path.  Finally, the
  curvature of $v$ or the cumulative curvature of the square path
  containing $v$ is at most $\frac{5\pi}{6}$, the extreme case being
  $v$ exposed in the middle of a square path.  These are the only
  possible sources of positive curvature and their total is bounded by
  $\frac{11\pi}{6} < 2\pi$, contradiction.  

  Next, suppose $u$ and $v$ belong the same square path.  If $u$ is
  not an endpoint, then the only positive summand is the cumulative
  curvature of this square path.  Moreover, its curvature is at most
  $\frac{\pi}{3} + 3 \cdot \frac{\pi}{2}$
  (Lemma~\ref{lem:square-paths}) since the $v$ and the two vertices
  adjacent to $u$ are the only ones that can be exposed.  If $u$ and
  $v$ belong to the same square path and $u$ is an endpoint, then the
  only positive summands are the cumulative curvature of this square
  path and a possible triangle-square-triangle move ending at $u$.
  These curvatures are bounded by $\frac{\pi}{3} + 2 \cdot
  \frac{\pi}{2}$ and $\frac{\pi}{3}$, respectively.  Both totals are
  less than $2\pi$, contradiction.
  
  Finally, suppose $u$ belongs to a square path that does not include
  $v$. As above, the curvature of $v$ or the cumulative curvature of
  the square path containing $v$ is at most $\frac{5\pi}{6}$, with the
  extreme case being $v$ exposed in the middle of a square path.  Thus
  it suffices to show that the curvatures associated with $u$ are at
  most $\pi$.  If $u$ is not an endpoint of the square path, then $u$
  itself is nonpositively curved and total curvature of the path on
  either side is bounded by $\frac{\pi}{3}$. To see this note that an
  easy bound is $\frac{\pi}{6} + \frac{\pi}{2}$, the first term from a
  positively curved endpoint and the second from a single exposed
  vertex adjacent to $u$, but if there are no unexpected angles along
  the path, then this leads a configuration that violates our
  assumption that both boundary paths from $u$ to $v$ are geodesics.
  Thus the improved bound is $\frac{\pi}{6} + \frac{\pi}{2}
  -\frac{\pi}{3} = \frac{\pi}{3}$.  And if $u$ is endpoint, then the
  cumulative curvature of the square path excluding $u$ is bounded by
  $\frac{\pi}{3}$ using the argument just given.  The curvature of $u$
  is at most $\frac{\pi}{6}$ and the triangle-square-triangle move
  ending at $u$, if it exists, has curvature at $\frac{\pi}{3}$.
  Thus, all configurations lead to totals less than $2\pi$.
\end{proof}

\begin{prop}[Initial link]\label{prop:initial-link}
  If $u$ and $v$ are distinct vertices in a $\cat(0)$ triangle-square
  complex $K$, then the link of $u$ in the interval $K[u,v]$ is either
  a single vertex or a single edge.
\end{prop}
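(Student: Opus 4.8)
The plan is to reduce the statement to the structure theory already developed for doubly-based diagrams, specifically Proposition~\ref{prop:unique-cell}. First I would dispose of the degenerate possibilities: the link of $u$ in $K[u,v]$ is nonempty since $u\neq v$ forces at least one geodesic, hence at least one edge of $K[u,v]$ incident to $u$, so the link contains at least one vertex. Our goal is to rule out anything larger than a single edge, i.e. to show the link cannot contain two vertices that fail to be joined by an edge, and cannot contain a vertex of valence $\geq 2$ inside the link (which would correspond to a triangle or square at $u$ with ``too much'' going on). The natural dichotomy, matching the remark preceding Proposition~\ref{prop:unique-cell}, is: either all geodesics from $u$ to $v$ share a common first edge, or they do not.

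In the first case --- all geodesics from $u$ to $v$ begin with a common edge $e$ --- I claim the link of $u$ in $K[u,v]$ is a single vertex. Every vertical edge of $K[u,v]$ at $u$ lies on a geodesic (Lemma~\ref{lem:interval-edge}) and hence must be $e$ itself, so $e$ is the unique vertical edge at $u$. A horizontal edge at $u$ would be contained in a triangle-square-triangle move (Lemma~\ref{lem:interval-edge}); but a horizontal edge at $u$ sits at distance $0$ from $u$, and tracing the structure of a triangle-square-triangle move shows the only vertices of valence $2$ (where the move's old and new paths meet) are its endpoints, so a move containing such an edge would have to have $u$ in its interior with two incident path-edges going ``up'', forcing a second vertical geodesic edge at $u$ distinct from $e$, a contradiction. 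Likewise a $2$-cell at $u$ would contribute a second vertical edge at $u$. Hence the only edge of $K[u,v]$ at $u$ is $e$ and no $2$-cell of $K[u,v]$ contains $u$, so the link is the single vertex corresponding to $e$.

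In the second case --- the geodesics from $u$ to $v$ do not all share a first edge --- I would invoke Theorem~\ref{thm:straightening}: picking two geodesics with distinct first edges and applying length-preserving moves, there is a finite sequence converting one to the other, and the first move in this sequence is nontrivial and has $u$ as an endpoint. Form the doubly-based disc diagram $D$ realizing a chosen pair of adjacent geodesics in this sequence (or more directly the diagram bounded by two geodesics differing near $u$); both its old and new paths are geodesics, so Proposition~\ref{prop:unique-cell} applies and $u$ lies in the boundary of a unique $2$-cell $C$ of $D$. This $C$ is either a triangle or a square. The link of $u$ in $K[u,v]$ is the union of the links of $u$ in the various $2$-cells and edges of $K[u,v]$ containing it; the content to extract is that in this case that union is exactly the link of $u$ in $C$, which is a single arc, i.e. a single edge of the link. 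The work here is to show that $K[u,v]$ contains no $2$-cell or edge at $u$ outside $C$: any extra vertical edge at $u$ would give a geodesic first edge not among the (two) edges of $C$ at $u$, and then by Theorem~\ref{thm:straightening} again one could build a doubly-based diagram exhibiting $u$ in the boundary of two distinct $2$-cells, contradicting Proposition~\ref{prop:unique-cell}; an extra horizontal edge or $2$-cell at $u$ is handled the same way since each forces an extra vertical edge at $u$ via Lemma~\ref{lem:interval-edge}.

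The main obstacle I anticipate is the bookkeeping in the second case: carefully arguing that every edge and $2$-cell of $K[u,v]$ incident to $u$ is forced to lie in the single cell $C$ supplied by Proposition~\ref{prop:unique-cell}, rather than merely that \emph{some} doubly-based subdiagram has this property. The clean way to handle this is to observe that if $e_1, e_2$ are two vertical edges at $u$ lying on geodesics $\gamma_1,\gamma_2$ to $v$, then $\gamma_1\gamma_2^{-1}$ (after splitting off any shared subpaths, as in the proof of Theorem~\ref{thm:straightening}) bounds a nonsingular $\cat(0)$ doubly-based diagram with both boundary paths geodesic, whose start vertex $u$ meets both $e_1$ and $e_2$; Proposition~\ref{prop:unique-cell} then forces $e_1$ and $e_2$ to be edges of a common $2$-cell, and an induction on the number of distinct geodesic first edges collapses everything at $u$ into one cell. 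Then Lemma~\ref{lem:interval-edge} absorbs the horizontal edges, and the link of $u$ is exactly the link of $u$ in that one cell: a single edge (when the cell is present) or a single vertex (when only $e$ is present).
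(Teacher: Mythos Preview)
Your reduction to Proposition~\ref{prop:unique-cell} is the right idea, and it is exactly how the paper begins: given any two edges $e_1,e_2$ of $K[u,v]$ at $u$, extend them to geodesics, fill the resulting loop by a (nonsingular, $\cat(0)$) doubly-based disc diagram, and apply Proposition~\ref{prop:unique-cell} to conclude that $e_1$ and $e_2$ bound a common $2$-cell. Where your argument diverges, and where it breaks, is the step ``an induction on the number of distinct geodesic first edges collapses everything at $u$ into one cell.'' This cannot work: a triangle or a square contains exactly \emph{two} edges at any of its vertices, so three distinct edges $e_1,e_2,e_3$ at $u$ can never lie in a single $2$-cell. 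Applying Proposition~\ref{prop:unique-cell} to the three diagrams built from the pairs $(e_1,e_2)$, $(e_1,e_3)$, $(e_2,e_3)$ only tells you that each pair spans a $2$-cell; these are three \emph{different} $2$-cells, and nothing in Proposition~\ref{prop:unique-cell} is contradicted, since that proposition speaks only of a single fixed diagram $D$, not of $K[u,v]$ globally.

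The ingredient you are missing is the curvature of the link. What the pairwise argument actually produces is a triangle in the link of $u$ in $K[u,v]$ (three link-vertices, each pair joined by a link-edge of length $\tfrac{\pi}{3}$ or $\tfrac{\pi}{2}$), hence a closed local geodesic loop of length at most $\tfrac{3\pi}{2}<2\pi$. Since $K[u,v]$ is $\cat(0)$ (Theorem~\ref{thm:intervals-cat0}), such a short loop is impossible; this is precisely how the paper finishes. The same link argument also disposes of multi-edges and loops in the link, which your case analysis does not address. (Incidentally, your discussion of horizontal edges at $u$ is unnecessary: a horizontal edge at $u$ would join $u$ to a vertex at distance $0$ from $u$, i.e.\ a loop, already excluded by the $\cat(0)$ hypothesis.)
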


\begin{proof} 
  Every pair of distinct vertices in the link corresponds to edges
  leaving $u$ that can be extended to geodesics $\alpha$ and $\beta$
  from $u$ to $v$. From these geodesics
  Theorem~\ref{thm:straightening} implicitly constructs a path in the
  link connecting the original vertices.  In fact, by
  Corollary~\ref{cor:disc} there is a $\cat(0)$ disc diagram $D$
  filling $\alpha\beta^{-1}$.  After restricting our attention to a
  nonsingular subdiagram containing $u$ if necessary,
  Proposition~\ref{prop:unique-cell} shows that there is a $2$-cell in
  $D$ touching $u$ providing an edge in the link connecting the two
  original vertices.  If the link contains three distinct vertices,
  then edges connect them to form a triangular geodesic loop that is
  short since each edge has length at most $\frac{\pi}{2}$.  This is
  impossible because $K[u,v]$ is $\cat(0)$
  (Theorem~\ref{thm:intervals-cat0}).  Finally, loops of length~$1$
  and multiple edges are prohibited in the link for similar reasons.
\end{proof}

\begin{thm}[First move]\label{thm:first-move}
  If $u$ and $v$ are distinct vertices in a $\cat(0)$ triangle-square
  complex $K$, then either every geodesic from $u$ to $v$ has the same
  first edge or there are exactly two possible first edges and there
  is a unique move in $K[u,v]$ that contains both of them.
\end{thm}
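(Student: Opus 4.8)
The plan is to run the argument through the link of $u$ in the interval $K[u,v]$, which Proposition~\ref{prop:initial-link} tells us is either a single vertex or a single edge. First I would dispose of the case where this link is a single vertex: that vertex corresponds to a unique edge leaving $u$ that extends to a geodesic toward $v$, so every geodesic from $u$ to $v$ begins with that common edge and we are in the first alternative. The real work is the case where the link of $u$ in $K[u,v]$ is a single edge $\varepsilon$ with endpoints two distinct vertices $p$ and $q$ of the link. These two link-vertices are precisely the directions of two distinct first edges $e_p$ and $e_q$ of geodesics from $u$ to $v$, so there are at least two possible first edges. I would then argue there are exactly two: a third distinct first edge would give a third vertex in the link of $u$, contradicting that this link is a single edge.

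The heart of the matter is producing the promised move. Since $e_p$ and $e_q$ are the first edges of geodesics $\alpha$ and $\beta$ from $u$ to $v$, I apply Corollary~\ref{cor:vk} to get a $\cat(0)$ disc diagram $D$ filling $\alpha\beta^{-1}$, and after passing to the nonsingular subdiagram of $D$ containing $u$ (as in the proof of Proposition~\ref{prop:initial-link}), I may assume $D$ is a doubly-based nonsingular $\cat(0)$ triangle-square disc diagram with both boundary paths geodesic. Proposition~\ref{prop:unique-cell} now applies: the start vertex $u$ lies in the boundary of a unique $2$-cell $C$ of $D$. The angle subtended by $C$ at $u$ is exactly the link-edge $\varepsilon$, so the two edges of $C$ incident to $u$ are exactly $e_p$ and $e_q$ (up to the two-cell possibly wrapping, which embeddedness of the link edge rules out). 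This cell $C$, read as a doubly-based diagram with start vertex $u$, is precisely a move in the sense of Definition~\ref{def:basic-moves}: a single triangle gives a triangle move and a single square gives (after choosing the non-adjacent vertex as end vertex) a square move. In either case the move contains both $e_p$ and $e_q$, and it lies in $K[u,v]$ because $K[u,v]$ is a full subcomplex containing all of $D$'s vertices between $u$ and $v$.

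Finally I would establish uniqueness of this move. Suppose $M$ and $M'$ are two moves in $K[u,v]$ each containing both first edges $e_p$ and $e_q$. Each of $M$, $M'$ is a move whose old path starts with one of $e_p,e_q$ and whose new path starts with the other, so each contributes a $2$-cell of $K[u,v]$ whose boundary meets $u$ in the angle $\varepsilon$ between $e_p$ and $e_q$. Two distinct such $2$-cells would fold together along the pair $e_p\cup e_q$ to create a short geodesic bigon (length at most $\pi$) in the link of $u$ in $K[u,v]$, contradicting that this link is a single edge, hence $\cat(0)$-thin, exactly as in the final sentence of the proof of Proposition~\ref{prop:initial-link}. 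So the $2$-cell is unique, and since the basic moves (triangle, square) are determined by their unique $2$-cell together with the choice of start and end vertices — and here the start vertex is $u$ while the end vertex of a triangle move is forced and that of a square move is the unique non-adjacent vertex — the move itself is unique. The main obstacle I anticipate is the bookkeeping in this last step: making sure that "the move is determined by its $2$-cell" is airtight, in particular handling the square case where one must check that the relevant square move's start and end vertices are forced by the requirement that $e_p$ and $e_q$ both emanate from the start vertex $u$.
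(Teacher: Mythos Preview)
Your argument has a genuine gap in the existence step. You assert that the unique $2$-cell $C$ at $u$, ``read as a doubly-based diagram with start vertex $u$, is precisely a move'': a triangle gives a triangle move, a square gives a square move. But when $C$ is a triangle this is wrong for the purpose at hand. A triangle move is \emph{length-reducing} (old path length~$2$, new path length~$1$), whereas here both edges $e_p,e_q$ leaving $u$ are first edges of geodesics to $v$, so their endpoints $p,q$ are both at distance $d(u,v)-1$ from $v$ and the edge $pq$ is horizontal. The move we need is a \emph{length-preserving} one whose two sides are both geodesic segments beginning at $u$; a single triangle cannot serve. In fact when $C$ is a triangle the required move is a triangle-triangle or triangle-square-triangle move, of which $C$ is only the initial cell. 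The paper produces the remaining cells by invoking Theorem~\ref{thm:straightening} (equivalently, Lemma~\ref{lem:interval-edge} applied to the horizontal edge $pq$ manufactures exactly such a move inside $K[u,v]$). Your argument supplies no mechanism for building these additional cells.

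This error propagates to your uniqueness argument. You correctly show the $2$-cell at $u$ is unique, and for a square move that does finish the job. But once the move can be a (possibly degenerate) triangle-square-triangle move with several cells, uniqueness of the first cell is not enough: two distinct moves could share the triangle at $u$ and diverge later. The paper handles this by looking at the \emph{first} cell where two putative moves differ, then using Proposition~\ref{prop:initial-link} at the common start vertex of the diverging vertical edges to manufacture a short loop in a vertex link of $K[u,v]$, contradicting $\cat(0)$. Your ``bookkeeping'' worry is therefore not about squares at all; it is about extending uniqueness past the first triangle, and that requires an inductive link argument you have not supplied.
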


\begin{proof}
  When the link of $u$ in $K[u,v]$ is a single vertex, then the
  corresponding edge in $K[u,v]$ is the first edge in every geodesic
  from $u$ to $v$.  The only other possibility, according to
  Proposition~\ref{prop:initial-link} is that the link of $u$ in
  $K[u,v]$ is a single edge.  In this case the vertex $u$ lies in the
  boundary of a unique $2$-cell and both edges adjacent to $u$ can be
  extended to geodesics from $u$ to $v$.  That this $2$-cell is part
  of some move in $K[u,v]$ is a consequence of
  Theorem~\ref{thm:straightening}.  To see uniqueness, suppose there
  were two such moves and focus on the $2$-cells closest to $u$ that
  are in one move but not the other.  Using
  Proposition~\ref{prop:initial-link} to fill in gaps between vertical
  edges with a common start vertex it is easy to construct a short
  loop in its link, contradicting the fact that $K[u,v]$ is $\cat(0)$.
\end{proof}

\begin{defn}[Gersten-Short geodesics]\label{def:gs-geos}
  Let $u$ and $v$ be vertices in a $\cat(0)$ triangle-square complex
  $K$.  A \emph{Gersten-Short geodesic} from $u$ to $v$ is defined
  inductively using Theorem~\ref{thm:first-move}.  If $u=v$ then the
  only geodesic is the trivial path.  When $u$ and $v$ are distinct
  there are two possibilities.  If every geodesic from $u$ to $v$ has
  the same first edge then the Gersten-Short geodesic travels along
  this edge to its other endpoint $u'$, and if there is a unique move
  in $K[u,v]$ that contains the two possible first edges, then we
  travel along either side of this unique move to its other endpoint
  $u'$.  In either case, the new vertex $u'$ is closer to $v$ and the
  rest of the path is an inductively defined Gersten-Short geodesic
  from $u'$ to $v$.  The vertex $u'$ is called the \emph{first choke
    point} of $K[u,v]$.  If we let $u_0=u$ and define $u_{i+1}$ as the
  first choke point of $K[u_i,v]$ then eventually $u_i=v$.  These
  $u_i$'s are the \emph{choke points} between $u$ and $v$ and every
  Gersten-Short geodesic from $u$ to $v$ goes through each one.
\end{defn}

\begin{figure}
  \includegraphics{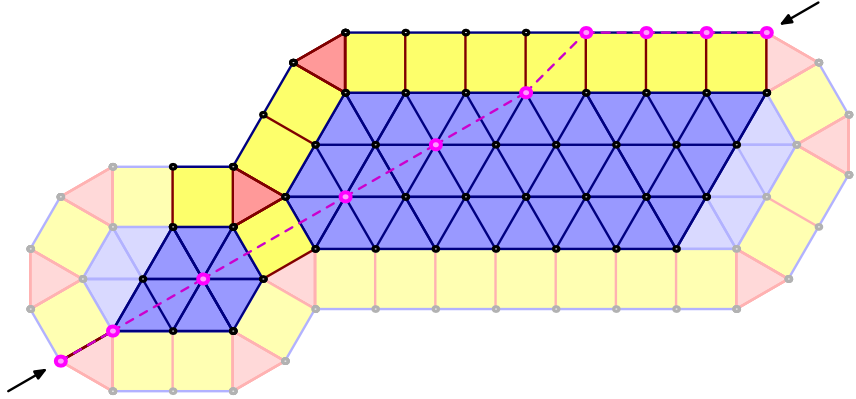}
  \caption{The choke points of the Gersten-Short geodesics in an
    interval.\label{fig:gs-geos}}
\end{figure}

\begin{exmp}[Gersten-Short geodesics]
  Figure~\ref{fig:gs-geos} shows the series of choke points in an
  interval used to define its Gesten-Short geodesics.  When the dotted
  line connecting two choke points runs along an edge, this edge
  belongs to all of the Gersten-Short geodesics between $u$ and $v$.
  When it cuts through a number of $2$-cells, these $2$-cells form the
  unique next move and every Gersten-Short geodesic runs along one
  side or the other.  In the example, there are, in order, an edge, a
  triangle-triangle move, a triangle-square-triangle move, two more
  triangle-triangle moves, a square move and finally three edges.
  Because there are $5$ moves involved there are $2^5=32$
  Gersten-Short geodesics.
\end{exmp}

\begin{exmp}[Pure complexes]
  When $K$ is a $\cat(0)$ triangle complex or a $\cat(0)$ square
  complex, its intervals look like subcomplexes of the standard square
  and triangular tilings of $\R^2$, respectively.  It is thus
  instructive to consider what intervals and Gersten-Short geodesics
  look like when $K$ itself is one of these tilings.  Nonsingular
  intervals in this special case look like parallelograms and
  rectangles and the choke points of typical Gersten-Short paths are
  shown in Figure~\ref{fig:gs-pure}.  In simple examples such as these
  it is easy to see the directed nature of the definition.  The
  Gersten-Short geodesics from $v$ to $u$ are definitely different.
\end{exmp}

\begin{figure}
  \begin{center}
    \begin{tabular}{cc}
      \includegraphics[scale=0.7]{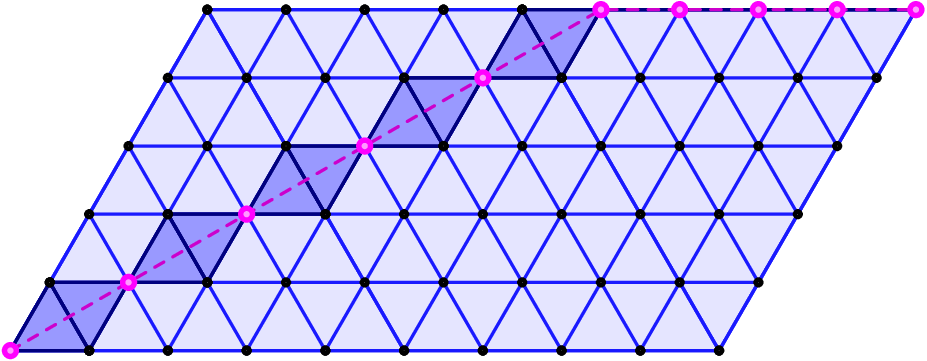} & 
      \includegraphics[scale=0.7]{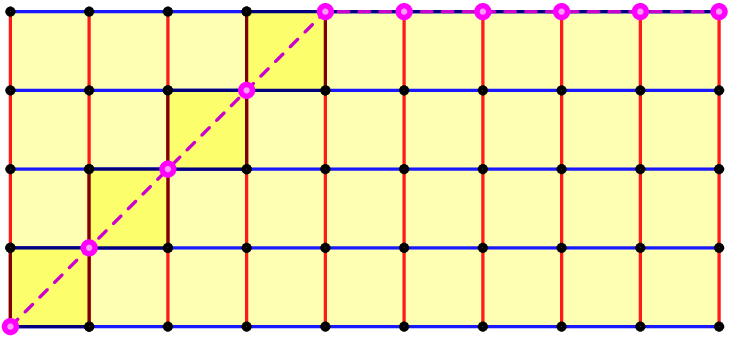}
    \end{tabular}
  \end{center}
  \caption{The choke points of Gersten-Short geodesics in intervals of
    the standard triangle and square tilings of
    $\R^2$.\label{fig:gs-pure}}
\end{figure}

It is important to remark, that in these pure cases, the paths that we
have identified are essentially the same paths that Gersten and Short
identified in \cite{GeSh90}.  The only difference is that they added
extra restrictions so that a \emph{unique} geodesic is selected
between every pair of endpoints $u$ and $v$.  This difference is
insignificant because of the following result which is an immediate
corollary of the definition and the fact that the two sides of a
length-preserving move $1$-fellow travel.

\begin{cor}[$1$-fellow travel]\label{cor:paths-fellow-travel}
  The Gersten-Short geodesics from $u$ to $v$ in a $\cat(0)$
  triangle-square complex $K$ pairwise $1$-fellow travel.
\end{cor}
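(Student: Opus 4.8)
The plan is to argue directly from the inductive structure of Definition~\ref{def:gs-geos}. Fix two Gersten-Short geodesics $\gamma$ and $\gamma'$ from $u$ to $v$. By the definition, both must pass through the first choke point $u'$ of $K[u,v]$: either they share the common first edge and arrive at $u'=u_1$ after one step, or they each traverse one of the two sides of the unique first move, and by construction (these moves are length-preserving basic moves) both sides have length one and end at $u'$. In the first case $\gamma(0)=\gamma'(0)$, $\gamma(1)=\gamma'(1)$, so on the initial segment they are literally equal; in the second case the two sides of a single basic move $1$-fellow travel, as noted just before the statement (every vertex of one side lies in the other side or is joined to it by an edge — this is exactly the observation recorded after Definition~\ref{def:basic-moves} that the subspace distance between the old and new path of a move is $1$), and since both sides are geodesics of the same length starting and ending at the same two points, Proposition~\ref{prop:ft-geo} upgrades the subspace bound $k=1$ with $\ell=0$ to a synchronous path-distance bound of $2$. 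But in fact for a length-preserving \emph{basic} move one sees directly that $\gamma$ and $\gamma'$ synchronously $1$-fellow travel on this initial segment: at each integer time $t$ the two points $\gamma(t)$ and $\gamma'(t)$ are corresponding vertices on the two sides and are at distance at most $1$ (this is immediate from inspecting the square move and the triangle-square-triangle move — opposite sides of a square or of the row of squares, matched up step by step).

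The heart of the argument is then the induction. After reaching the common vertex $u_1=u'$, the tails $\gamma|_{[u_1,v]}$ and $\gamma'|_{[u_1,v]}$ are themselves Gersten-Short geodesics from $u_1$ to $v$, so by induction on $d(u,v)$ they synchronously $1$-fellow travel. The only bookkeeping point is that the parametrizations glue correctly: every Gersten-Short geodesic from $u$ to $v$ spends the same number of steps (namely the common length of the first edge or first move, which is $1$) getting from $u$ to $u_1$, so the time parameter at which all of them sit at $u_1$ is the same integer, and the two fellow-traveling estimates — one for the initial segment, one for the tail — concatenate without any shift. Hence $d(\gamma(t),\gamma'(t))\le 1$ for every $t$, which is precisely the assertion that they $1$-fellow travel.

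I expect the main obstacle to be purely presentational: making sure the ``two sides of a length-preserving move $1$-fellow travel'' claim is justified with the right synchronization rather than merely as a subspace-distance statement. It would be cleanest to isolate this as a one-line sublemma — for each of the square move and the triangle-square-triangle move, fix the obvious step-by-step pairing of the vertices of the old path with the vertices of the new path and observe that paired vertices are equal or adjacent — and then the corollary is, as advertised, an immediate consequence of this sublemma together with the definition of Gersten-Short geodesics and induction on $d(u,v)$. No disc-diagram machinery is needed here; everything rides on Theorem~\ref{thm:first-move} having already done the work of establishing that the sequence of choke points is well-defined and common to all the geodesics in question.
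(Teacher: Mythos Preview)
Your approach is exactly the paper's: the corollary is declared immediate from the definition of Gersten-Short geodesics together with the assertion that the two sides of a length-preserving move $1$-fellow travel, and your induction along the choke-point sequence is the natural way to unpack that sentence.

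Two slips to correct. First, the sides of a length-preserving basic move do \emph{not} have length one: a square move and a triangle-triangle move each have sides of length~$2$, and a triangle-square-triangle move with $k$ squares has sides of length~$k+2$. What makes the induction glue is only that both sides have the \emph{same} length, which is precisely what ``length-preserving'' means; replace ``length one'' and ``which is~$1$'' by ``equal length'' and your bookkeeping goes through unchanged. Second, inspect the square move more carefully: at time~$1$ the old and new paths sit at the two corners of the square other than $u$ and $u'$, and these are themselves a pair of \emph{opposite} corners, hence at combinatorial distance~$2$ in general. Your step-by-step pairing does give distance at most~$1$ for triangle-triangle and (possibly degenerate) triangle-square-triangle moves, since paired intermediate vertices are joined by an interior edge of the diagram; the square move is the lone obstruction to the literal constant~$1$. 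The inductive structure of your argument is sound regardless of which small constant one ends up with.
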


%%%%%%%%%%%%%%%%%%%%%%%%%%%%%%%%
\section{Flats}\label{sec:flats}
%%%%%%%%%%%%%%%%%%%%%%%%%%%%%%%%

Now that we have established the existence of a language of geodesic
paths that generalizes the regular language of geodesics used by
Gersten and Short, we shift our attention to flats, the main obstacles
to establishing biautomaticity.

\begin{defn}[Flats]\label{def:flats}
  A \emph{triangle-square flat} is a triangle-square complex isometric
  to the Euclidean plane $\euc = \R^2$.  Suggestive portions of
  several triangle-square flats are shown in Figure~\ref{fig:flats}.
  More generally, a \emph{flat} is any metric space isometric to a
  Euclidean space $\R^n$, with $n$ at least $2$, and a \emph{flat in a
    metric space $K$} is a flat $F$ together with an isometric
  embedding $F\to K$.  When $F$ is a triangle-square flat and $K$ is a
  triangle-square complex we further insist that the embedding is a
  cellular map.
\end{defn}

The importance of flats is highlighted by the following result.

\begin{thm}[Flats and biautomaticity]\label{thm:flats-biaut}
  Let $K$ be a compact nonpositively curved \pe complex.  If the
  universal of $K$ does not contain an isometrically embedded flat
  plane, then $\widetilde K$ is $\delta$-hyperbolic, $\pi_1(K)$ is
  word hyperbolic, and thus $\pi_1(K)$ is biautomatic.
\end{thm}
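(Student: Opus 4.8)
The plan is to invoke the Flat Plane Theorem, which characterizes hyperbolicity of \cat(0) spaces via the absence of isometrically embedded flats, and then appeal to the standard fact that word hyperbolic groups are biautomatic. First I would pass to the universal cover: since $K$ is compact and nonpositively curved, $\widetilde K$ is a complete, simply connected, nonpositively curved \pe complex with only finitely many isometry types of cells (these are the cells of $K$, lifted), so $\widetilde K$ is \cat(0) by Proposition~\ref{prop:char-cat}. The group $G=\pi_1(K)$ acts geometrically on $\widetilde K$ by deck transformations.

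The key step is the Flat Plane Theorem: a \cat(0) space admitting a geometric group action is $\delta$-hyperbolic if and only if it contains no isometrically embedded copy of the Euclidean plane $\euc$. This is due to Gromov, with proofs given by Bridson and by Kleiner; for \pe complexes a clean reference is \cite{BrHa99} (Chapter III.H). Actually one needs a mild bridge here: the hypothesis forbids an isometrically embedded flat \emph{plane}, i.e. a copy of $\R^2$, and the theorem as usually stated forbids exactly that, so there is no gap. (If one worried about higher-dimensional flats $\R^n$ with $n\ge 3$, note any such contains an isometrically embedded $\R^2$, so the $n=2$ case is the only obstruction.) Applying the theorem, $\widetilde K$ is $\delta$-hyperbolic for some $\delta$.

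From here the conclusion is routine. A group acting geometrically on a $\delta$-hyperbolic geodesic space is word hyperbolic by the Milnor--\v{S}varc lemma together with the quasi-isometry invariance of hyperbolicity; thus $G=\pi_1(K)$ is word hyperbolic. Finally, every word hyperbolic group is biautomatic --- indeed the language of all geodesic words (or of shortlex-least words) over any finite generating set is regular and synchronously fellow-travels, which by Theorem~\ref{thm:biaut} gives a biautomatic structure. This last fact is \cite[Theorem~3.4.5]{ECHLPT92}.

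The main obstacle is not really a deep step but a bookkeeping one: ensuring that the Flat Plane Theorem applies in exactly the generality needed, namely to \pe \cat(0) complexes with cocompact (not necessarily free or properly discontinuous in a stronger sense than geometric) group actions, and reconciling the statement's phrase ``flat plane'' with the ``no embedded $\euc$'' hypothesis in the theorem. Since the deck action here is in fact free, proper, and cocompact, all hypotheses are comfortably met, so this amounts to citing the right version rather than proving anything new.
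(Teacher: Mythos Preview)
Your proposal is correct and follows essentially the same route as the paper: invoke the flat plane theorem from \cite{BrHa99} to get $\delta$-hyperbolicity of $\widetilde K$, deduce word hyperbolicity of $\pi_1(K)$, and then cite \cite{ECHLPT92} for the biautomaticity of word hyperbolic groups. The paper's own proof is a terse one-sentence version of exactly this argument, so your expansion (making the passage to the universal cover and the use of Milnor--\v{S}varc explicit) adds helpful detail without deviating from the intended approach.
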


\begin{proof}
  These assertions follow from the flat plane theorem \cite{BrHa99},
  the definition of word hyperbolicity, and the fact, proved in
  \cite{ECHLPT92}, that every word hyperbolic group is biautomatic.
\end{proof}

More generally, Chris Hruska has shown in \cite{Hr05} that any group
that acts geometrically on a $\cat(0)$ $2$-complex with the isolated
flats property is biautomatic.  For $\cat(0)$ $2$-complexes, the
isolated flats property is the same as the non-existence of an
isometric embedding of a \emph{triplane}, a triple of upper
half-planes sharing a common boundary line.  Thus, if $\widetilde K$
contains no flats (or if those that do exist are isolated), then
$\pi_1(K)$ is already known to be biautomatic.  In particular, when
biautomaticity fails, it fails directly or indirectly because of the
flats in $\widetilde K$.  When analyzing a flat, we begin by
decomposing it into regions.

\begin{defn}[Regions]\label{def:regions}
  Let $F$ be a triangle-square flat and consider the equivalence
  relation on its $2$-cells generated by placing $2$-cells in the same
  equivalence class when they are the same type and share an edge.
  The union of the closed $2$-cells in an equivalence class is called
  a \emph{region} of $F$ and it is \emph{triangular} or \emph{square}
  depending on the type of $2$-cell it contains.  For example, the
  flat in the lower lefthand corner of Figure~\ref{fig:flats} has six
  visible triangular regions and five square regions.
\end{defn}

The next step is to observe that these regions are convex, based on
the strong local restrictions that vertices in triangle-square flats
must satisfy.

\begin{figure}
  \begin{center}
    \begin{tabular}{cccc}
      \begin{tabular}{c}\includegraphics{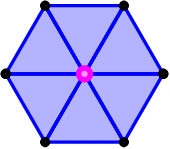}\end{tabular}&
      \begin{tabular}{c}\includegraphics{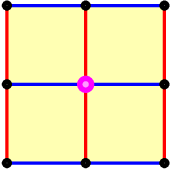}\end{tabular}&
      \begin{tabular}{c}\includegraphics{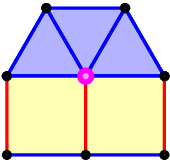}\end{tabular}&
      \begin{tabular}{c}\includegraphics{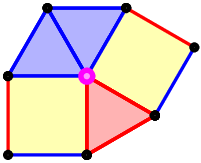}\end{tabular}\\
    \end{tabular}
  \end{center}
  \caption{Four possible vertex neighborhoods in a
    flat.\label{fig:vtypes}}
\end{figure}

\begin{defn}[Vertices in flats]
  A vertex $v$ in a triangle-square flat $F$ is surrounded by corners
  whose angles total $2\pi$ and the possibilities are extremely
  limited.  If $v$ lies in the interior of a region then it is either
  surrounded by $6$~triangles or by $4$~squares.  The only other
  possibility is that there are $3$~triangles and $2$~squares touching
  $v$ and they are arranged in one of two distinct ways.  When the two
  squares are adjacent, we say $v$ is \emph{along a side} and when
  they are not, $v$ is a \emph{corner} of a region.  See
  Figure~\ref{fig:vtypes}.
\end{defn}

In each local configuration, the boundaries of the represented regions
are locally convex and thus every region is (globally) convex.  When
analyzing more complicated triangle-square flats, it is useful to
introduce a color scheme to highlight additional features.

\begin{defn}[Colors]\label{def:colors}
  Let $F$ be a triangle-square flat and assume it has been oriented so
  that at least one of its edges is vertical or horizontal.  The
  restrictive nature of the angles involved means that for every edge
  $e$ in $F$, the line $\ell$ containing $e$ points in one of six
  directions.  Concretely, the angle from horizontal is a multiple of
  $\frac{\pi}{6}$.  We divide the edges of $F$ into two classes based
  on the parallelism class of the lines containing them.  If the line
  containing $e$ is horizontal (or forms a $\frac{\pi}{3}$ angle with
  a horizontal line) then it is a \emph{blue edge}.  If the line
  containing $e$ is vertical (or forms a $\frac{\pi}{3}$ angle with a
  vertical line) then it is a \emph{red edge}.  Under these
  definitions, the six possible parallelism classes alternate between
  red and blue.  Note that all three edges of a triangle receive the
  same color so that it makes sense to speak of \emph{red triangles}
  and \emph{blue triangles}.  The edges of a square, on the other
  hand, alternate in color.  Being neither red nor blue, squares are
  assigned a third color such as yellow.  (For those viewing an
  electronic version of this article, we have followed these coloring
  conventions in all of our illustrations.)  When $F$ is oriented so
  that our color conventions apply, we say that $F$ is \emph{colored}.
\end{defn}

\begin{figure}
  \begin{center}
    \begin{tabular}{ccc}
      \includegraphics[scale=0.45]{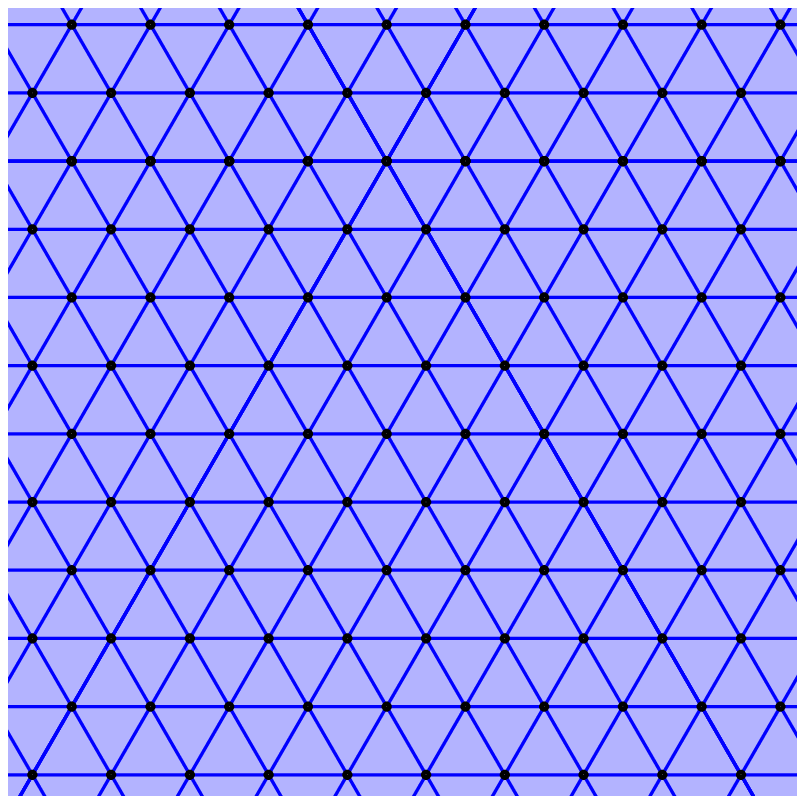} & 
      \includegraphics[scale=0.45]{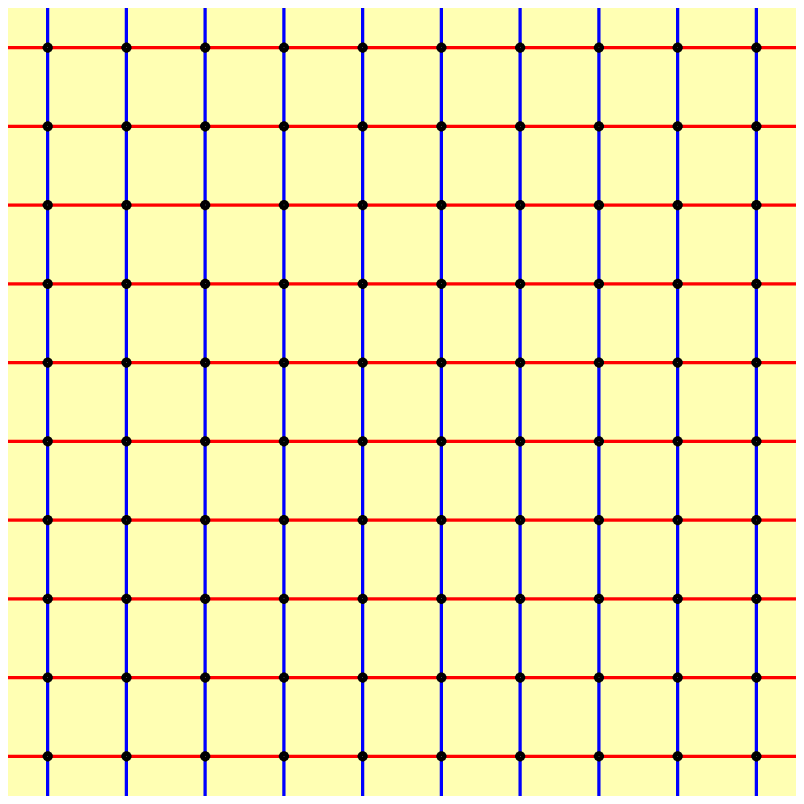} & 
      \includegraphics[scale=0.45]{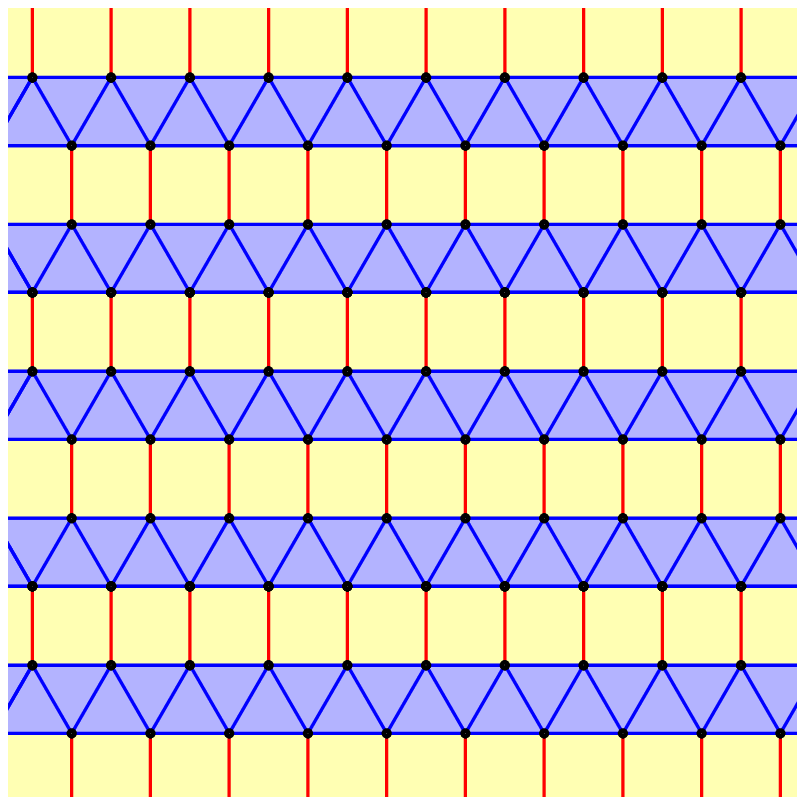}\\
      \includegraphics[scale=0.45]{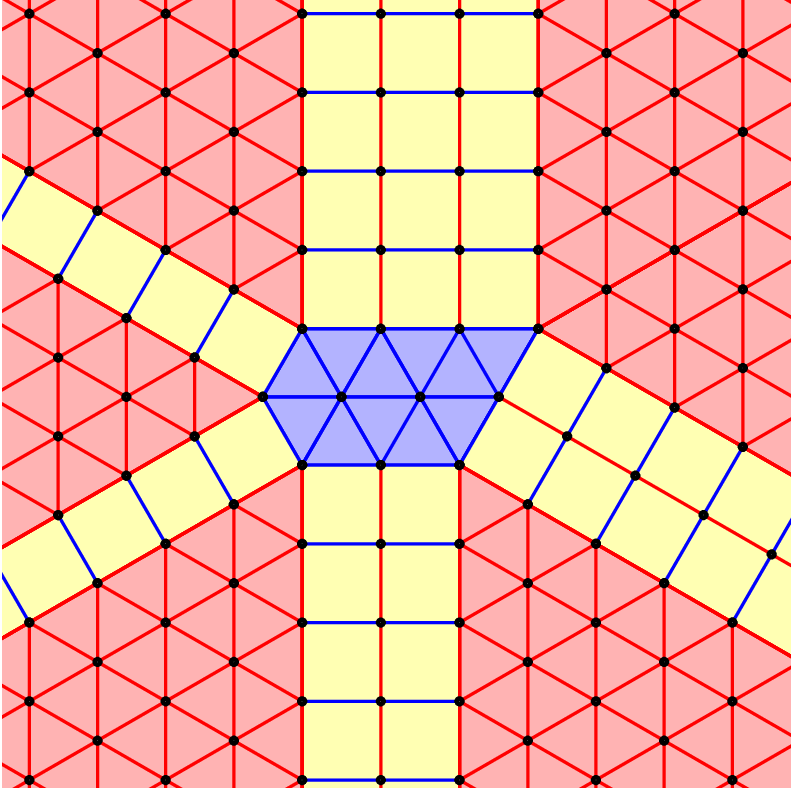} & 
      \includegraphics[scale=0.45]{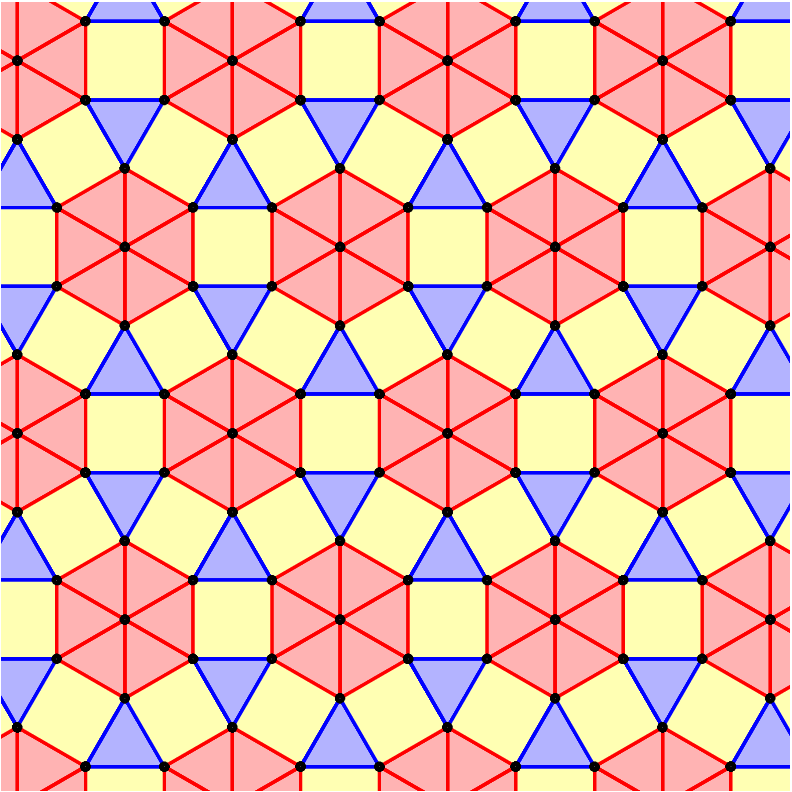} & 
      \includegraphics[scale=0.45]{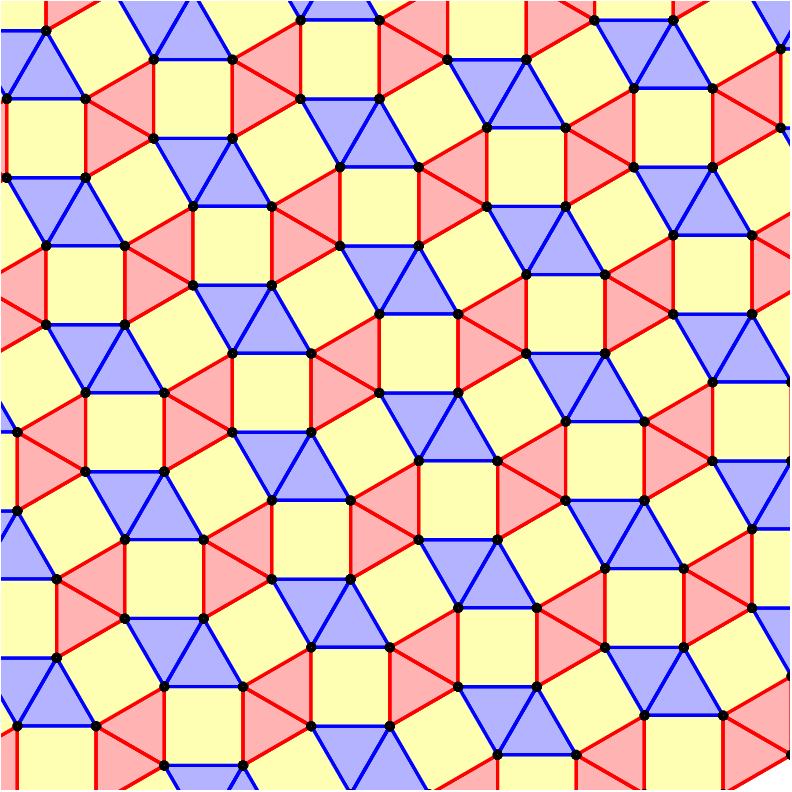}\\
    \end{tabular}
  \end{center}
  \caption{Six examples of triangle-square flats. They are pure
    triangle, pure square, striped, radial, and a pair of thoroughly
    crumpled flats.  The radial flat is intrinsically aperioidic; the
    other five are potentially periodic.\label{fig:flats}}
\end{figure}

\begin{defn}[Types of flats]
  Let $F$ be a triangle-square flat.  If $F$ contains only triangles
  or only squares, we say that $F$ is \emph{pure}.  If $F$ contains
  both triangles and squares but it contains no corner vertices, then
  $F$ is \emph{striped}.  If $F$ contains a corner vertex and at least
  one region that is unbounded, then $F$ is \emph{radial}.  Finally,
  if every region of $F$ is bounded, then $F$ is \emph{crumpled} and
  if there is a uniform bound on the number of $2$-cells in a region
  of $F$, we say $F$ is \emph{thoroughly crumpled}.  Several examples
  are shown in Figure \ref{fig:flats}).
\end{defn}

The reason that impure flats with no corners are called striped planes
is explained by the following observation.

\begin{lem}[Striped flats]\label{lem:striped}
  Let $F$ be a triangle-square flat.  If one region of $F$ has a
  straight line as part of its boundary then every region of $F$ is
  bounded by parallel straight lines.  In particular, a
  triangle-square flat is striped iff it contains a region with a
  straight line as part of its boundary, which is true iff every
  region is an infinite strip or a half-plane.
\end{lem}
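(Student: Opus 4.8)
The plan is to exploit the extremely rigid local structure of vertices in a triangle-square flat (the four neighborhood types in Figure~\ref{fig:vtypes}) and propagate a single straight boundary segment across the whole flat. First I would set up the key local fact: if an edge $e$ lies on the boundary of a region $R$ and $e$ is part of a straight line $\ell$ in the boundary of $R$ — meaning the two boundary edges of $R$ adjacent to $e$ at each endpoint continue $e$ colinearly — then at each endpoint vertex $v$ of $e$ the angle on the $R$-side is exactly $\pi$, so the angle on the non-$R$ side is also $\pi$. Consulting the list of vertex types, an angle of $\pi$ on one side forces $v$ to be an ``along a side'' vertex (three triangles and two adjacent squares): the flat-interior types contribute $\pi$ only as $3\cdot\frac{\pi}{3}$ (triangular interior) or $2\cdot\frac{\pi}{2}$ (square interior), and a corner vertex splits $2\pi$ as $\frac{\pi}{3}+\frac{\pi}{2}+\frac{\pi}{3}+\frac{\pi}{3}+\frac{\pi}{2}$, whose partial sums never equal $\pi$. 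So a straight boundary edge of a region is flanked on both sides by region-boundaries that are themselves straight across $v$, and the region on the far side of $\ell$ is again bounded (at least locally near $v$) by the straight line $\ell$.

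Second, I would promote this to a global statement by an induction/connectedness argument. Starting from the region $R_0$ whose boundary contains a straight line $\ell_0$, the previous paragraph shows the adjacent region $R_1$ across $\ell_0$ also has $\ell_0$ as a full straight boundary line (propagating the colinearity endpoint-by-endpoint along all of $\ell_0$, using that each interior vertex of $\ell_0$ is an ``along a side'' vertex). Iterating perpendicular-ish to $\ell_0$, every region met by crossing straight lines one after another is bounded by two parallel lines, all parallel to $\ell_0$; since $F$ is connected and tiled by these stacked strips (and two half-planes can only occur as the first/last region in a finite stack, which cannot happen in the plane, so in fact every region is a bi-infinite strip), we conclude every region of $F$ is an infinite strip or a half-plane and all their boundary lines are parallel. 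This gives the first sentence and the implication ``contains a region with a straight line boundary $\Rightarrow$ every region is a strip or half-plane.''

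Third, I would close the cycle of iff's. The implication ``every region is a strip or a half-plane $\Rightarrow$ $F$ is striped'' is essentially definitional: strips and half-planes have no corner vertices on their boundary, and an interior vertex of a region is never a corner vertex either, so $F$ has no corner vertices and contains both triangles and squares (a pure flat has only one region, which as a strip would have to be... actually a pure flat tiled by one region type has boundary-free regions, so once there are genuine strips both types must appear — I would phrase this carefully). For the remaining implication ``$F$ striped $\Rightarrow$ some region has a straight line boundary,'' I would argue that in a striped flat a bounded region is impossible: a bounded region $R$ has a boundary circuit, hence a vertex $v$ on $\partial R$ where $R$ turns, and a turning boundary vertex of a region that is not a corner vertex must be an ``along a side'' vertex — but I need the region to actually bend, and the only vertex types allowing $\partial R$ to have total turning are corner and along-a-side; with corners excluded, every boundary vertex is along-a-side, where the region boundary goes \emph{straight} through, so $\partial R$ can never turn and $R$ cannot be bounded. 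Hence some region is unbounded; an unbounded convex region (convexity from the paragraph after Definition~\ref{def:regions}) whose boundary never turns is a strip or half-plane, so in particular it has a straight line as part of its boundary.

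The main obstacle I anticipate is the bookkeeping in the third step — specifically ruling out bounded regions in a striped flat and handling the half-plane versus bi-infinite-strip dichotomy. The subtlety is that ``no corner vertices'' must be converted into ``region boundaries are geodesic lines,'' which requires checking that at an ``along a side'' vertex the boundary arc of \emph{each} of the two regions meeting there passes straight through (angle $\pi$ on each side) and that at interior vertices no boundary passes at all; this is a finite check against Figure~\ref{fig:vtypes} but must be stated cleanly. Everything else is a propagation argument that is routine once the local rigidity lemma is in hand.
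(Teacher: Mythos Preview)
Your approach is viable and broadly correct, but it is organized differently from the paper's and has one real gap that the paper's argument closes in a single stroke.

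The paper's proof leans almost entirely on the convexity of regions (stated just before the lemma). Given a region $R$ whose boundary contains a straight line $\ell$, convexity immediately forces $R$ to be a half-plane or an infinite strip: any additional boundary component must be a line, and a nonparallel line would meet $\ell$, contradicting convexity. One then hops across $\ell$ to the neighboring region and repeats. Your vertex-type analysis in step~1 is correct and nicely rules out corner vertices along $\ell$, but your step~2 only establishes that the neighbor $R_1$ across $\ell_0$ again has $\ell_0$ as a straight boundary line; it does \emph{not} show that the remaining boundary of $R_0$ (or of $R_1$) is a single parallel line. Your ``iterating perpendicular-ish'' sentence assumes each $R_i$ is already a strip, which is precisely what needs proving. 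You do know about convexity (you invoke it in step~3), so the fix is simply to use it here as the paper does; without it, the local lemma alone does not propagate.

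Your step~3 for ``striped $\Rightarrow$ some region has a straight line boundary'' is correct but more circuitous than necessary. The paper just observes that a striped flat contains an edge with a triangle on one side and a square on the other (since both cell types occur and the complex is connected); with no corner vertices present, that edge extends in both directions to a full straight boundary line. This avoids the detour through ``bounded regions are impossible.'' Your version works, but it reproves the first sentence of the lemma in disguise. In summary: your vertex-type route is a legitimate alternative to the paper's convexity route, and in fact your step~1 supplies a nice explanation of \emph{why} regions are convex along straight stretches; but to finish cleanly you should import the convexity conclusion into step~2 rather than leave the strip structure asserted.
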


\begin{proof}
  Let $R$ be the region with a straightline as part of its boundary.
  If this is all of its boundary then $R$ is a half-plane.  If it is
  not all of its boundary then the convexity of $R$ forces the
  remaining portion to be a parallel straightline (since corners lead
  to contradictions to convexity and nonparallel straight lines would
  intersect).  In other words, $R$ must be an infinite strip, an
  interval cross $\R$.  Shifting our the region on the other side of a
  boundary line of $R$ we see that it too is either a half-plane or a
  strip and we can continue in this way until the entire plane is
  exhausted.  The final assertion is now nearly immediate.  If $F$ has
  both triangles and squares but no corner vertices then there is an
  edge bordering both a triangle and a square and it must extend to a
  straight line boundary between regions.  As argued above, it follows
  that every region is then an infinite strip or a half-plane.
  Conversely, when every region is either a half-plane or an infinite
  strip, the flat clearly contains both triangles and squares and no
  corner vertices.
\end{proof}

As a corollary of Lemma~\ref{lem:striped} note that the obvious flats
embedded in a triplane must be pure or striped since triplanes are
formed from three tiled half-planes with a common boundary.  The wide
variety of cell structures exhibited by triangle-square flats is the
key difficulty encountered when trying to extend results from pure
triangle and pure square complexes to mixed triangle-square complexes.
This is in sharp contrast with the pure situation since every flat in
a triangle complex or in a square complex is obviously pure.

\begin{defn}[Pure flats]\label{def:pure-flats}
  A pure triangle-square flat of either type is essentially unique in
  that it must look like the standard tiling of $\R^2$ by triangles or
  a standard tiling by squares.  Because the vertex sets of pure flats
  can be identified with the Eisenstein integers $\Z[\omega]$ in the
  triangle case or the Gaussian integers $\Z[i]$ in the square case
  (where $\omega$ and $i$ are primitive third and fourth roots of
  unity), we call these complexes \emph{Eisenstein} and
  \emph{Gaussian} planes, respectively, and denote them $\E$ and $\G$.
\end{defn}

One illustration of the variety available in the mixed case, is the
existence of flats that are intrinsically aperiodic.

\begin{defn}[Intrinsically aperiodic flats]
  Let $K$ be a complex and let $F$ be a flat that embeds into its
  universal cover $\widetilde K$.  Composition with the covering
  projection immerses $F$ into $K$ itself and it is traditional to
  call $F$ \emph{periodic} or \emph{aperiodic} depending on whether or
  not this immersion $F \to K$ factors through metric Euclidean torus.
  This is equivalent to finding a subgroup isomorphic to $\Z \times
  \Z$ in $\pi_1(K)$ so that the corresponding deck transformations of
  $\widetilde K$ stabilize the image of $F$ setwise and act
  cocompactly on this image.  For our purposes we need a different
  distinction that is intrinsic to $F$ itself.  We say that $F$ is
  \emph{potentially periodic} if there exists a cocompact $\Z \times
  \Z$ action on $F$ by cellular maps and \emph{intrinsically
    aperiodic} if no such action exists.  We should note that by
  passing to a finite-index subgroup if necessary, we can always
  insist that the $\Z\times \Z$ action on $F$ consists solely of
  translations.
\end{defn}

Only certain types of flats can be potentially periodic.

\begin{prop}[Potentially periodic flats]\label{prop:aperiodic-flats}
  Every potentially periodic triangle-square flat is pure, striped or
  thoroughly crumpled.
\end{prop}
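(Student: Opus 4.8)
The plan is to prove the contrapositive: if a triangle-square flat $F$ is neither pure nor striped nor thoroughly crumpled, then no cocompact $\Z\times\Z$ action on $F$ by cellular maps exists. By the classification of flats into pure, striped, radial, and crumpled types, the flats that are neither pure nor striped are exactly the radial flats and the crumpled flats, and among the latter exactly the ones that are not thoroughly crumpled. So I would split into two cases: (1) $F$ is radial, and (2) $F$ is crumpled but not thoroughly crumpled, i.e.\ every region is bounded but the number of $2$-cells in a region is unbounded.

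For case (1), recall that a radial flat contains a corner vertex and an unbounded region. First I would observe that a cocompact $\Z\times\Z$ action forces the set of corner vertices to be either empty or infinite and "uniformly spread" (a Dirichlet-domain / fundamental-domain argument: cocompactness means there are only finitely many orbits of vertices, hence only finitely many orbits of corner vertices, and each orbit is a lattice-translate of a single point). The key tension is that an unbounded region $R$ has a boundary which, by the local analysis of vertices in flats (the convexity of regions established just before the statement), is a convex polygonal curve turning only at corner vertices; having at least one corner, $R$ is a convex region that is either a half-plane-like wedge or an unbounded convex polygon. An unbounded convex region with at least one corner has boundary rays, and I would argue that the two boundary rays of the wedge containing that corner cannot both be respected by a cocompact $\Z\times\Z$ translation lattice unless they are parallel — but if they are parallel the region is a strip and has no corner. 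More carefully: a $\Z\times\Z$ translation acting on $F$ must permute the regions, and cocompactness forces the orbit of $R$ to cover $F$ up to finitely many region-orbits; translating the unbounded wedge $R$ by a lattice generator either fixes $R$ setwise (impossible, since a nontrivial translation of a proper convex wedge is never the wedge itself) or moves $R$ off itself, and iterating shows infinitely many disjoint translates of an unbounded region must fit in the plane with bounded complementary regions, which a curvature/area count rules out. The cleanest route is probably: the corner vertex has a well-defined pair of "region-boundary directions" meeting at an angle strictly between $0$ and $\pi$ and not equal to $\pi$; a translation lattice cannot act cocompactly while the combinatorial type at every vertex in an orbit is a corner with the \emph{same} pair of directions, because following either boundary ray from the corner, the ray is an infinite geodesic in the $1$-skeleton bounding an unbounded region on one side, and cocompactness would force a second parallel such ray on the other side within bounded distance, pinching the unbounded region into a strip — contradiction.

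For case (2), $F$ is crumpled with regions of unbounded size. A cocompact $\Z\times\Z$ action has a fundamental domain of bounded diameter, say $\le C$. Any region $R$ must then satisfy: $R$ is covered by translates of the fundamental domain, and since the action permutes regions with finitely many orbits, if $R$ is large then some lattice translate $g$ maps a cell of $R$ into another cell of $R$; but then $g$ maps $R$ to a region sharing a cell with $R$, hence $g\cdot R = R$ (regions are equivalence classes, and two regions sharing a $2$-cell are equal). So $R$ is $g$-invariant for some nontrivial translation $g$, making $R$ unbounded — contradicting crumpledness. Running this for a sequence of regions whose sizes exceed any bound shows every sufficiently large region is invariant under a nontrivial translation, hence unbounded; so all regions have size bounded by the threshold, i.e.\ $F$ is thoroughly crumpled. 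The main obstacle is making the "large region forces a self-translation" step airtight: I need that a region with more $2$-cells than, roughly, the number of cells in a fundamental domain must contain two cells in the same $\Z\times\Z$-orbit, which follows from counting cells per fundamental domain, and then that the translation carrying one to the other carries the whole region to a region meeting it — this uses cocompactness to bound how the orbit map interacts with the region decomposition, and the fact (from convexity of regions) that regions are "thick" enough that a cell lying in a region determines the region. I expect case (1), the radial case, to be the genuinely delicate one, since ruling out cocompact actions on wedge-like unbounded regions requires the convexity input and a careful argument that parallel boundary rays would collapse the region to a strip; case (2) is essentially a pigeonhole-plus-cocompactness argument.
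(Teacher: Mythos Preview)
Your Case (2) argument is essentially the paper's proof: pigeonhole on a region with more $2$-cells than a fundamental domain yields a nontrivial lattice element $g$ with $gR=R$, and a convex subset of $\R^2$ invariant under a nontrivial translation is a strip, half-plane, line, or all of $\R^2$. You stop there by saying ``hence $R$ is unbounded, contradicting crumpledness,'' which is fine for Case (2). The paper instead observes that such an $R$ (unless $R=\R^2$, i.e.\ $F$ is pure) has a straight line as part of its boundary and then invokes the striped-flats lemma (Lemma~\ref{lem:striped}) to conclude $F$ is striped. That is the whole proof.

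The point you are missing is that this single argument already covers your Case (1). A radial flat has an \emph{unbounded} region, which in particular has more $2$-cells than any fundamental domain, so the same pigeonhole gives a nontrivial $g$ with $gR=R$. There is no need for a separate analysis of wedges, rays, or corner-vertex orbits. Your Case (1) discussion has real gaps as written: you consider translating $R$ by an arbitrary lattice generator and split into ``$gR=R$'' versus ``$gR$ disjoint from $R$,'' but nothing forces a \emph{generator} to land in either case, and the ``infinitely many disjoint unbounded translates'' and ``pinching into a strip'' arguments are left as sketches that would take genuine work to make rigorous (and are unnecessary). The clean route is exactly what you did in Case (2): pigeonhole $\Rightarrow$ translation-invariant convex region $\Rightarrow$ straight-line boundary $\Rightarrow$ Lemma~\ref{lem:striped} forces $F$ to be striped, contradicting radial. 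So drop the case split, run your Case (2) argument once for any region with more $2$-cells than a fundamental domain, and finish with Lemma~\ref{lem:striped}.
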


\begin{proof}
  Let $F$ be a potentially periodic triangle-square flat.  By
  assumption there is a cocompact action of $\Z\times \Z$ on $F$.  Let
  $n$ be the number of vertices in a fundamental domain for this
  action and note that regions are sent to regions under this cellular
  action.  If $F$ is not thoroughly crumpled, it contains a region $R$
  with more than $n$ vertices, which implies that there is a
  non-trivial translation of $F$ that sends $R$ to $R$.  In
  particular, $R$ is a convex subset of $\R^2$ that is invariant under
  a non-trivial translation.  Unless $F$ is pure, this means that $R$
  contains a straightline as part of its boundary and by
  Lemma~\ref{lem:striped}, the flat $F$ is striped.
\end{proof}

%%%%%%%%%%%%%%%%%%%%%%%%%%%%%%%%%%%%%%%%%%%%%%%%%%%
\section{Geodesics in flats}\label{sec:flat-geos}
%%%%%%%%%%%%%%%%%%%%%%%%%%%%%%%%%%%%%%%%%%%%%%%%%%%

As a first step towards showing that the Gersten-Short geodesics can
be used to define biautomatic structures, consider the following
question.  Suppose that $F$ is a triangle-square flat viewed as a
$\cat(0)$ triangle-square complex in its own right.  Does there exists
a universal constant $k$, possibly depending on the structure of $F$,
such that every pair of Gersten-Short paths in $F$ that start and end
at most $1$ unit apart synchronously $k$-fellow travel?  The answer,
unfortunately, is that this is true for certain types of flats (such
as pure flats and striped flats) but it is definitely not true in
general.  In other words, this is an instance where new phenomena are
produced when triangles and squares are mixed.  We begin by showing
that Gersten-Short paths in pure flats and striped flats are
well-behaved.

\begin{prop}[Fellow traveling in pure flats]\label{prop:ft-pure}
  If $F$ is a pure triangle or pure square flat, then pairs of
  Gersten-Short geodesics in $F$ that start and end within $1$ unit of
  each other synchronously $k$-fellow travel for a small explicit
  value of $k$.
\end{prop}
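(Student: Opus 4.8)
The plan is to exploit the very rigid structure that Definition~\ref{def:gs-geos} forces on Gersten-Short geodesics inside the pure flats, reduce fellow travelling to a comparison of ``corridors'', and then feed that comparison into Proposition~\ref{prop:ft-geo}. First I would pin down the corridors. In $\G$, identify the vertices with $\Z^2$ so that the two edge directions are the coordinate axes; the interval $K[u,v]$ is then the axis-parallel rectangle with opposite corners $u$ and $v$. Since a pure square flat contains no triangles, the only nontrivial move of Definition~\ref{def:basic-moves} that can appear in Theorem~\ref{thm:first-move} is a square move (moves between geodesics are length-preserving by Theorem~\ref{thm:straightening}, so triangle moves are excluded even in $\E$). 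Hence the recursion of Definition~\ref{def:gs-geos} simply slices off the corner square at $u$ over and over: the choke points march diagonally from $u$ toward $v$ until one side of the rectangle is exhausted, and then the path runs straight along the remaining side. Consequently every Gersten-Short geodesic from $u$ to $v$ is contained in the canonical subcomplex $C(u,v)$ built from this diagonal chain of unit squares together with the terminal straight segment --- a ``thickened staircase with a tail'' of width one, determined by $u$ and $v$ alone. In $\E$ the picture is identical with the standard triangular tiling replacing $\Z^2$: triangle moves are length-reducing and triangle-square-triangle moves require squares, so the only nontrivial move is a triangle-triangle move (a rhombus), the interval is a parallelogram, and every Gersten-Short geodesic from $u$ to $v$ lies in an analogous canonical corridor $C(u,v)$ made of a diagonal chain of rhombi with a straight tail.

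Next I would observe two things. First, a single Gersten-Short geodesic from $u$ to $v$ not only lies in $C(u,v)$ but is $2$-dense in it, since between consecutive choke points it runs along one of the two sides of a single square or rhombus, and those two sides are within graph distance $2$ of each other; so $C(u,v)$ and any one such geodesic have bounded subspace distance. Second, and this is the crux, if $d(u,u')\le 1$ and $d(v,v')\le 1$ then the Hausdorff distance between $C(u,v)$ and $C(u',v')$ is bounded by a small explicit constant. This is a short case analysis: the vector $v-u$ changes by at most $2$, so the ``knee'' where the diagonal chain meets the straight tail moves by a bounded amount, while the diagonal portion and the tail are each translated by at most $1$ because their directions are fixed. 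The one subtlety --- the main obstacle, such as it is --- is the borderline configurations in which $v-u$ has (nearly) equal coordinates, so that a unit perturbation can swap which side of the rectangle is the long one and thereby reflect the corridor across its diagonal; but even then the two corridors differ by only a couple of units, and there are finitely many such configuration types, so a crude bound on each suffices. Combining the two observations, any Gersten-Short geodesic $\alpha$ from $u$ to $v$ and any Gersten-Short geodesic $\beta$ from $u'$ to $v'$ have subspace distance at most some explicit $k_0$.

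Finally, since $\alpha$ and $\beta$ are honest combinatorial geodesics in the $1$-skeleton that start at most $1$ unit apart, Proposition~\ref{prop:ft-geo} upgrades the subspace bound $k_0$ into the path-distance bound $2k_0+1$, so $\alpha$ and $\beta$ synchronously $(2k_0+1)$-fellow travel; carrying the constants through the case analysis yields the advertised small explicit $k$. Essentially all the real work is the Hausdorff comparison of corridors, and even that reduces to bookkeeping over a short list of configurations, so I expect no genuine obstacle here. It is worth noting that the argument uses heavily that intervals in $\E$ and $\G$ are planar, locally Euclidean, and fit into a single standard tiling (contrast Figure~\ref{fig:bad-intervals}); once triangles and squares are genuinely mixed these fail, and with them the existence of a uniform thin corridor, which is precisely the phenomenon the next section investigates.
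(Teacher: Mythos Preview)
Your proposal is correct and follows essentially the same approach as the paper: identify the ``diagonal chain of moves followed by a straight tail'' structure of Gersten-Short geodesics in a pure flat, argue that a unit perturbation of the endpoints moves this structure by a bounded amount (hence bounds the subspace distance), and then invoke Proposition~\ref{prop:ft-geo} to upgrade to synchronous fellow travelling. The paper phrases the comparison in terms of the piecewise-linear path through the choke points rather than your corridor $C(u,v)$, and it dispatches your ``borderline configuration'' with a parenthetical, but the content is the same.
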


\begin{proof} 
  Let $F$ be a pure square flat and let $\alpha$ be a Gersten-Short
  geodesic from $u$ to $v$.  Because the structure of $F$ is so
  simple, we know that the choke points initially jump across a
  (possibly empty) sequence of square moves, followed by a (possibly
  empty) sequence of unique edges.  In particular, if we arrange $F$
  so that its edges are horizontal/vertical and then connect the
  successive choke points with straight segments, the result is a pair
  of line segments approximating $\alpha$ where the first has slope
  $\pm 1$ and the second is either horizontal or vertical.  See Figure
  \ref{fig:gs-pure}.  If $\beta$ is a second Gersten-Short geodesic
  that starts within $1$ unit of $u$ and ends within $1$ unit of $v$,
  the same procedure will produce a pair of line segments with the
  same slopes and in the same configuration (unless one of the two
  original segments is very short).  In particular, every choke point
  of $\alpha$ is connected to a choke point of $\beta$ by an edge and
  vice versa.  Since every vertex in $\alpha$ or $\beta$ is within $1$
  unit of a choke point, the subspace distance between the two paths
  is at most $3$ and by Proposition~\ref{prop:ft-geo}, $\alpha$ and
  $\beta$ synchronously $7$-fellow travel.  (With a bit more work one
  can show that $\alpha$ and $\beta$ synchronously $3$-fellow travel,
  but it is the existence of a bound that is significant.)  The
  argument for Gersten-Short geodesics in pure triangle flats is
  essentially identical.
\end{proof}

A similar result can be established for geodesics in striped flats.

\begin{figure}
  \begin{center}
    \begin{tabular}{ccc}
      \begin{tabular}{c}\includegraphics[scale=0.5]{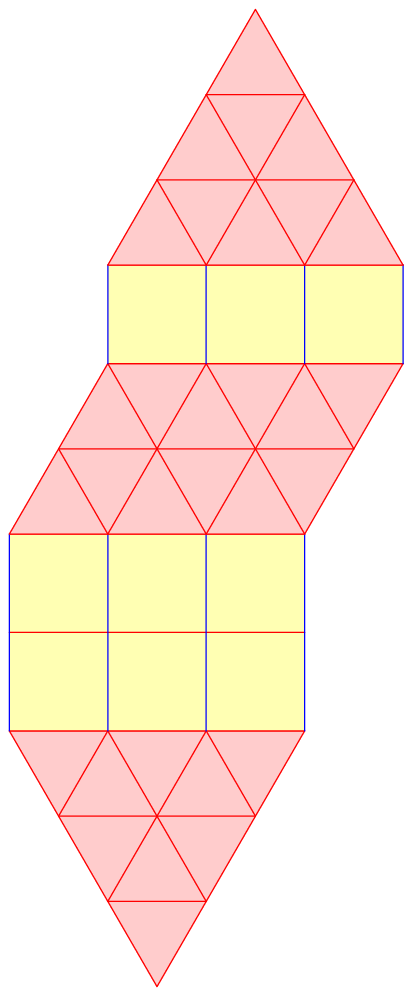}\end{tabular} & 
      \begin{tabular}{c}\includegraphics[scale=0.5]{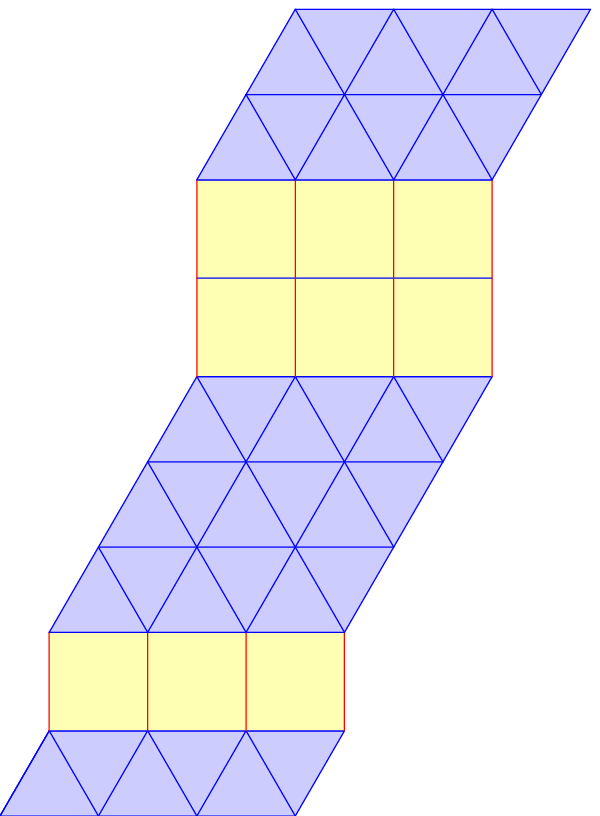}\end{tabular} &
      \begin{tabular}{c}\includegraphics[scale=0.5]{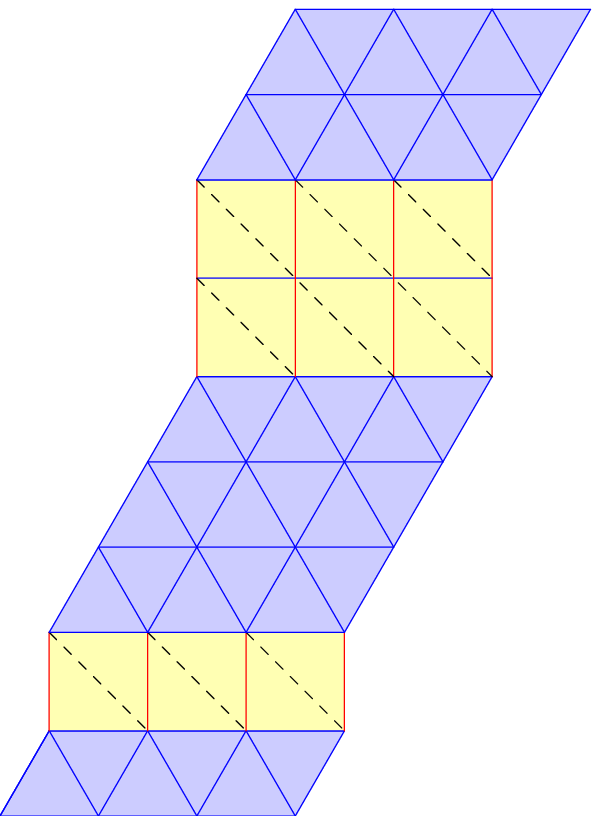}\end{tabular}\\
    \end{tabular}
  \end{center}
  \caption{Two examples of intervals in striped flats and a
    subdivision of the squares in the second example as described in
    the proof of Proposition
    \ref{prop:ft-striped}.\label{fig:ft-striped}}
\end{figure}

\begin{prop}[Fellow traveling in striped flats]\label{prop:ft-striped} 
  If $F$ is a striped flat, then pairs of Gersten-Short geodesics in
  $F$ that start and end within $1$ unit of each other synchronously
  $k$-fellow travel for a small explicit value of $k$.
\end{prop}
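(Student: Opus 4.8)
The plan is to leverage the rigid structure that Lemma~\ref{lem:striped} forces on a striped flat and then to mimic, strip by strip, the argument used for pure flats in Proposition~\ref{prop:ft-pure}. Orient $F$ so that the common direction of its region boundaries is horizontal. Then $F$ is a stack of horizontal strips, and since two adjacent strips share a whole line of unit edges they cannot be of the same type, so the strips alternate between triangle strips (of height $\tfrac{\sqrt3}{2}$) and square strips (of height $1$), with at most one half-plane capping each end. Fix vertices $u$ and $v$ with, say, $v$ no lower than $u$. The interval $F[u,v]$ is $\cat(0)$ (Theorem~\ref{thm:intervals-cat0}) and, because the union of the strips spanned by $u$ and $v$ is a convex slab containing every geodesic from $u$ to $v$, the interval and hence every Gersten-Short geodesic $\alpha$ from $u$ to $v$ lives in that slab and meets each strip it visits in a connected subpath, giving a decomposition $\alpha = \alpha_0\alpha_1\cdots\alpha_m$, one piece per strip.

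First I would carry out a single-strip analysis. Inside a square strip the local picture — the available moves (square moves and forced edges) and the shape of the interval — is exactly that of a pure square flat, and inside a triangle strip it is that of a pure triangle flat; the only genuinely new feature is how $\alpha$ passes from one strip to the next across a boundary line, and there the along-a-side vertex configuration leaves only finitely many possibilities, the nontrivial one being a height-one triangle-square-triangle move. As in the proof of Proposition~\ref{prop:ft-pure} I would join the successive choke points of $\alpha_i$ by Euclidean segments to obtain a path $\widehat{\alpha_i}$ that uses at most a bounded number of segments, whose slopes lie in the finite admissible set, and that lies within an absolute constant of $\alpha_i$; moreover the data of $\widehat{\alpha_i}$ is determined, up to bounded perturbation, by the two points at which $\alpha$ crosses the bounding lines of that strip. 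Concatenating, $\widehat\alpha$ is a piecewise-linear path uniformly close to $\alpha$ whose behaviour inside each strip is pinned down by its two crossing points on the walls of that strip. A picture of this, in which the squares of the interval have been cut into triangles so that the square strips look like sheared triangle strips and the choke-point analysis of the pure triangle case can be transcribed as a bounded bi-Lipschitz deformation, is drawn in Figure~\ref{fig:ft-striped}.

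The comparison step proceeds by an induction on strips. Let $\beta$ be a second Gersten-Short geodesic with endpoints $u',v'$, where $d(u,u')\le 1$ and $d(v,v')\le 1$; then $\alpha$ and $\beta$ span the same block of strips up to one half-open strip at either end. The base of the induction is that $\alpha$ and $\beta$ enter the lowest strip within a bounded distance of one another, since their initial points do. For the inductive step, the single-strip analysis shows that inside a given strip each of $\widehat\alpha,\widehat\beta$ is controlled by its entry and exit crossing points; because the Gersten-Short construction is canonical and directed (Definition~\ref{def:gs-geos}), these crossing points depend on the pair of endpoints only up to bounded perturbation, exactly as in the pure case, so from bounded separation of the entry points one gets bounded separation of the exit points, i.e.\ of the entry points into the next strip, and the top constraint $d(v,v')\le 1$ closes off the last strip. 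Since a striped flat has only universally bounded local configurations, all of these bounds are absolute, so the subspace distance between $\alpha$ and $\beta$ is at most an explicit constant (and when $\alpha$ and $\beta$ share endpoints this is just Corollary~\ref{cor:paths-fellow-travel}). Proposition~\ref{prop:ft-geo} then converts this into a synchronous $k$-fellow-traveller bound with small explicit $k$, proving the proposition.

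The main obstacle is precisely this strip-to-strip bookkeeping: one must show that the perturbation handed from one wall to the next does not accumulate, so that $\widehat\alpha$ and $\widehat\beta$ do not drift apart by an amount growing with the number of strips crossed. The reassuring point is that between two consecutive walls the relevant piece of $F$ is a single pure strip whose behaviour is already understood through Proposition~\ref{prop:ft-pure}; the only thing transmitted across a wall is the bounded location of one crossing point, and the canonicity of the Gersten-Short paths keeps that location tied to $u$ and $v$ rather than letting it float.
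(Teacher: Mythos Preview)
Your strip-by-strip induction is a different route from the paper's, and the step you yourself flag as ``the main obstacle'' is exactly where it is incomplete.  The paper avoids the accumulation question entirely by a global reduction rather than an induction: it observes that for a given interval $F[u,v]$ the boundary edges between triangular and square regions are either all ``horizontal'' or all ``vertical'' in the sense of Definition~\ref{def:v/h-edges}, and then treats these two cases separately.  In the horizontal case every square sits in a triangle-square-triangle move whose choke segment is vertical, so collapsing all squares vertically sends the interval to an interval in a pure triangle flat without changing the distances between Gersten--Short paths.  In the vertical case every square is part of a square move, and inserting the diagonal joining the two vertices equidistant from $u$ in each square turns the interval into (a combinatorially equivalent copy of) a pure triangle interval with the same set of combinatorial geodesics.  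Either way one invokes Proposition~\ref{prop:ft-pure} once, on a single pure object, and reads off a constant that is manifestly independent of the number of strips.

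By contrast, your inductive step only gives ``bounded entry separation $\Rightarrow$ bounded exit separation'' strip by strip, and as written this allows the bound to grow linearly in the number of strips crossed.  Your justification that ``the canonicity of the Gersten--Short paths keeps that location tied to $u$ and $v$'' is really the whole content of the proposition and cannot be asserted by analogy with the pure case; in the pure case it follows because the choke points lie on a single straight segment, and you have not established the analogue here.  The cleanest fix is precisely the paper's: make the horizontal/vertical dichotomy, perform the appropriate global transformation (collapse or subdivide), and then appeal to Proposition~\ref{prop:ft-pure} just once.  Your reference to cutting the squares into triangles is in fact one half of this idea; carried out globally rather than locally, it gives the vertical case directly.
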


\begin{proof} 
  Let $F[u,v]$ be an interval in a striped flat $F$ between vertices
  $u$ and $v$.  If $F$ is arranged so that the boundaries between
  regions are horizontal, then $F[u,v]$ can viewed as an interval in a
  pure triangle flat, i.e. a (possibly degenerate) triangulated
  parallelogram, with with horizontal square strips inserted.  Two
  possibilities are illustrated in Figure~\ref{fig:ft-striped}.  The
  key difference between the two examples is whether the edges that
  lie on the boundary between a triangular region and a square region
  are ``horizontal'' or ``vertical'' in the terminology of
  Definition~\ref{def:v/h-edges}.  When one of these boundary edges is
  ``horizontal'', they are all ``horizontal'' and every square in a
  square region of $F[u,v]$ is part of a triangle-square-triangle move
  where the line segment connecting the choke points at either end is
  vertical.  In particular, these line segments are all parallel to
  each other and the distance between the Gersten-Short paths in this
  interval is the same as the distance between Gersten-Short paths in
  the pure triangle regions formed by vertically collapsing every
  square simultaneously.  On the other hand, when one of these
  boundary edges is ``vertical'', they are all ``vertical'' and every
  square in a square region of $F[u,v]$ is part of a square move.  In
  particular, diagonal lines can be added connecting the opposite
  vertices of each square that are the same distance from $u$.  The
  result is a new combinatorial pattern that looks like a portion of a
  pure triangle flat (as shown on the right hand side of
  Figure~\ref{fig:ft-striped}).  The key observation is that the set
  of paths that are combinatorial geodesics did not change.  The
  arguments used to prove Proposition~\ref{prop:ft-pure} show that
  Gersten-Short geodesics in intervals of this type that start and end
  close to each other remain close throughout.  The only change is
  that as a final step, the inserted diagonals need to be removed
  which merely forces the old $k$ to be replaced by a slightly larger
  $k'$.
\end{proof}

Unfortunately, concrete examples show that these types of arguments
cannot be extended to cover all triangle-square flats.

\begin{figure}
  \includegraphics[scale=.9]{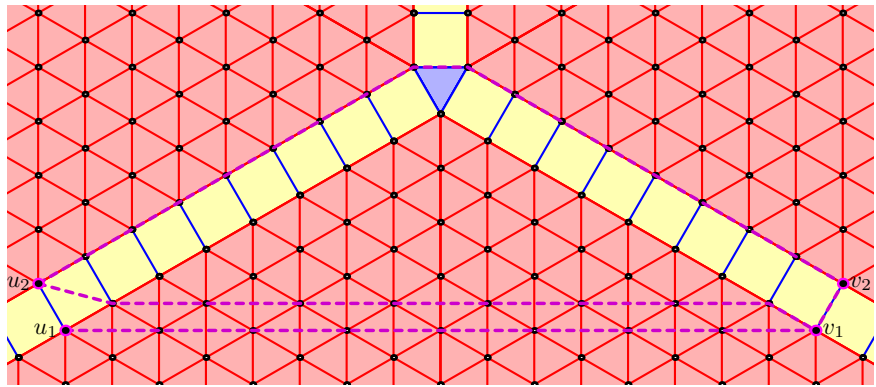}
  \caption{Paths that fail to $2$-fellow-travel.\label{fig:bad-plane}}
\end{figure}

\begin{prop}[Fellow traveling in radial flats]\label{prop:ft-radial} 
  There exist a radial flat plane $F$ such that for every $k>0$ we can
  find pairs of Gersten-Short geodesics $\alpha$ and $\beta$ in $F$
  that start and end an edge apart but where the subspace distance
  between $\alpha$ and $\beta$ is larger than $k$.  In particular,
  there are radial flat planes in which the Gersten-Short geodesics do
  not synchronously $k$-fellow travel for any value of $k$.
\end{prop}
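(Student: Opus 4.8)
The plan is to construct an explicit radial flat $F$ and an explicit family of geodesic pairs whose divergence grows without bound. A radial flat has a corner vertex $c$ and at least one unbounded region; the natural model to work with is a flat that is ``$6$ sectors around $c$'', alternating triangular and square regions, so that each of the three square sectors is an infinite strip-like wedge (of angle $\frac{\pi}{2}$ or $\frac{2\pi}{3}$, whatever the local vertex conditions of Figure~\ref{fig:vtypes} allow) emanating from $c$, and each of the three triangular sectors fills in the rest. The picture in Figure~\ref{fig:bad-plane} is exactly this kind of flat. First I would fix such an $F$, check it is genuinely a triangle-square flat (every vertex has angle sum $2\pi$ and is of one of the four admissible types), and verify it is radial (it contains the corner $c$ and at least one region, in fact several, that is unbounded).

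Next I would choose the basepoints. Take $u$ and $v$ to be two vertices lying in the interior of a single unbounded triangular region, far from $c$ and positioned symmetrically about the ray through $c$ that bisects that triangular sector, with $d(u,v)$ large. Inside a pure triangular region the interval $K[u,v]$ is a triangulated parallelogram, and by Proposition~\ref{prop:ft-pure}–style reasoning the Gersten-Short geodesic there is essentially a two-segment path. The point is that a Gersten-Short geodesic from $u$ to $v$ is \emph{directed}: it first crosses a sequence of moves and then follows a sequence of forced edges (Definition~\ref{def:gs-geos}), and whether the initial burst of moves carries the path \emph{toward} $c$ (hence into a square region and then across it) or \emph{away} from $c$ depends delicately on the position of $u$ relative to $v$. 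By translating $u$ by a single edge — changing it to an adjacent vertex $u'$ — I would arrange that the Gersten-Short geodesic $\alpha$ from $u$ to $v$ stays in the triangular region and runs ``below'' the bisecting ray, while the Gersten-Short geodesic $\beta$ from $u'$ to $v$ is forced by the first-move rule to dive toward $c$, cross one of the square sectors, and come back out — so that $\beta$ makes a detour that passes within bounded distance of $c$. Because the two paths bracket the corner, the midpoints of $\alpha$ and $\beta$ are separated by a distance comparable to the distance from the bisecting ray out to where $\alpha$ runs, i.e. to $d(u,v)$.

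The main calculation, and the main obstacle, is the book-keeping that shows the first-move rule really does send $\alpha$ and $\beta$ to opposite sides of $c$: one has to follow the choke-point recursion of Definition~\ref{def:gs-geos} through the square sector and confirm that the unique next move at each stage behaves as claimed, using the convexity of regions and the structure of intervals straddling a region boundary (as in the proof of Proposition~\ref{prop:ft-striped}). Once that is in hand, the divergence bound is immediate: given any $k>0$, choose $d(u,v) > 2k + $ (a fixed constant depending only on the distance from the triangular sector to $c$ in the chosen $F$); then the subspace distance between $\alpha$ and $\beta$ exceeds $k$, since some point of $\beta$ (near its closest approach to $c$) is farther than $k$ from every point of $\alpha$, which stays uniformly on the far side of the bisecting ray. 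Since $\alpha$ and $\beta$ start one edge apart and end at the same vertex $v$, this contradicts $k$-fellow travelling for the chosen $k$; as $k$ was arbitrary, no uniform constant works for this $F$, which is the assertion.
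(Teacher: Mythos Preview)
Your construction places both endpoints $u$ and $v$ (and hence also $u'$) in the interior of a single triangular region.  That region is convex, so the interval $F[u,v]$ is simply the standard triangulated parallelogram inside the region; the same is true of $F[u',v]$.  Both Gersten-Short geodesics therefore lie entirely in that one triangular region, and by the pure-triangle argument of Proposition~\ref{prop:ft-pure} they fellow travel with a small fixed constant.  In particular, the path you want $\beta$ to take---``dive toward $c$, cross one of the square sectors, and come back out''---is strictly longer than any path staying in the convex triangular region, so it is not a geodesic at all, let alone a Gersten-Short geodesic.  The ``main calculation'' you flag as an obstacle is not just bookkeeping: it fails outright for this configuration.

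The paper uses a genuinely different placement.  The endpoints $u_i$ and $v_i$ sit in two \emph{different} square regions, separated by a triangular region $T$; specifically $u_1$ and $v_1$ lie on the boundary of $T$ while $u_2$ and $v_2$ are one edge deeper into their respective square regions.  The key observation is that the interval $F[u_2,v_2]$ contains a \emph{unique} combinatorial geodesic, and that unique geodesic is forced to run along the boundaries of the square regions and through the central triangle.  Meanwhile the Gersten-Short geodesics for $(u_1,v_1)$ stay roughly straight across $T$.  No appeal to the first-move recursion is needed for the divergent pair: uniqueness of the geodesic does all the work.  You should relocate your endpoints to distinct square regions and look for this rigidity, rather than trying to coax the choke-point rule into producing a detour.
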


\begin{proof}
  Let $F$ be the radial flat plane shown in Figure~\ref{fig:bad-plane}
  with only one bounded region consisting of a single triangle and all
  other regions unbounded.  Next, let $u_1$ and $u_2$ be two vertices
  connected by an edge in the interior of one of the three square
  regions distance $\ell$ from the bounded region.  Similarly let
  $v_1$ and $v_2$ be two vertices connected by an edge in the interior
  of a different square region distance $\ell$ from the one bounded
  region with subscripts chosen so that $u_1$ and $v_1$ belonging to
  the boundary of the same triangular region.  The figure illustrates
  such a configuration for $\ell = 8$.  The Gersten-Short geodesics in
  the intervals $F[u_1,v_1]$, in $F[u_2,v_1]$ and in $F[u_1,v_2]$ are
  all roughly parallel (and, in this illustration, roughly
  horizontal).  In $F[u_2,v_2]$, however, the only geodesic, and
  therefore the unique Gersten-Short geodesic, is an edge path that
  travels along the boundaries of the square regions and includes an
  edge from the central triangle.  By chosing $\ell$ sufficiently
  large, the subspace distance between any Gersten-Short geodesic in
  $F[u_1,v_1]$ and the Gersten-Short geodesic in $F[u_2,v_2]$ can be
  made as large as possible.  In particular, the Gersten-Short
  geodesics in $F$ do not synchronously $k$-fellow travel for any
  value of $k$.
\end{proof}

At this point it is important to emphasize what this example does and
does not show.  Although it demonstrates that there are problems that
need to be avoided, it does not in and of itself show that
Gersten-Short geodesics cannot be used to establish the main
conjecture, Conjecture~\ref{conj:biaut}.  This is because the radial
flat used as a counterexample is intrinsically aperiodic
(Proposition~\ref{prop:aperiodic-flats}) and it is not at all clear
that such a flat can occur in the universal cover of a compact
nonpositively curved triangle-square complex $K$.  In fact, we
conjecture that no such embeddings exist.

\begin{conj}[Intrinsically aperiodic flats]\label{conj:aperiodic}
  If $K$ is a compact nonpositively curved triangle-square complex and
  $F$ is an intrinsically aperiodic flat then $F$ does not embed into
  $\widetilde K$ and it does not immerse into $K$.
\end{conj}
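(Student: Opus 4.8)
The plan is to prove the contrapositive of the embedding assertion — that every flat $F$ embedded in $\widetilde K$ is potentially periodic — and to reduce the immersion assertion to it. For the reduction, suppose $\phi\colon F\to K$ is an immersion of a triangle-square flat. Since $F$ is homeomorphic to $\R^2$ it is simply connected, so $\phi$ lifts to a cellular map $\widetilde\phi\colon F\to\widetilde K$ that is again an immersion away from the $0$-skeleton and an isometry on each cell; its vertex-link maps are immersions of circles of length $2\pi$ into graphs of girth at least $2\pi$ and hence embeddings, so $\widetilde\phi$ is a local isometry in the metric sense. A local isometry whose domain is a complete $\cat(0)$ space and whose target is $\cat(0)$ carries local geodesics to local geodesics, hence geodesics to geodesics, and is therefore a global isometric embedding onto a convex subset (see \cite{BrHa99}); thus $\widetilde\phi$ realizes $F$ as a flat in $\widetilde K$. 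So it suffices to show that an arbitrary flat $F\subset\widetilde K$ admits a cocompact $\Z\times\Z$ action by cellular maps.

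Two features of $\widetilde K$ drive the argument. First, because $K$ is compact, $\widetilde K$ is locally finite and $\pi_1(K)$ acts cocompactly on it, so for each radius $R$ there are only finitely many isometry types of radius-$R$ balls in $\widetilde K$ and hence in $F$ with its induced Euclidean metric; I call this the \emph{finite local complexity} of $F$. Second, the vertex analysis of Section~\ref{sec:flats} shows every region of $F$ is convex and the only vertex types are the two interior types, side vertices, and corner vertices. Combining these with Lemma~\ref{lem:striped} partitions the problem: if $F$ is pure it is obviously potentially periodic; if $F$ is non-pure with no corner vertex then Lemma~\ref{lem:striped} forces it to be striped; and if $F$ is non-pure with a corner vertex then $F$ is either radial (some region unbounded) or crumpled (every region bounded). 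By Proposition~\ref{prop:aperiodic-flats} a radial flat, and a crumpled flat that is not thoroughly crumpled, are never potentially periodic, so the conjecture reduces to: (i) no radial flat embeds in $\widetilde K$; (ii) no crumpled-but-not-thoroughly-crumpled flat embeds in $\widetilde K$; and (iii) a striped or thoroughly crumpled flat in $\widetilde K$ has a genuinely periodic region pattern, not merely one of finite local type.

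For (ii) and (iii) the natural tool is a compactness argument. A region with many $2$-cells has large diameter, so a flat violating (ii) contains arbitrarily large pure disks, and a striped or thoroughly crumpled flat whose pattern is not periodic contains, by pigeonhole on the finitely many local pictures, two occurrences of an arbitrarily large patch sitting in incompatible global positions. Rescaling is unavailable, but one can base $F$ deep inside these patches and pass to a pointed Gromov–Hausdorff limit; pushing basepoints onto a $\pi_1(K)$-orbit and using cocompactness, the limit ambient space is again $\widetilde K$ and the limit flat $F_\infty$ contains a straight line separating a pure half-plane from the rest, so by Lemma~\ref{lem:striped} it is striped (or pure) with a genuine period. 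The delicate part is transporting that extra symmetry back to the original $F$: producing some periodic sub-flat of $\widetilde K$ is routine, but concluding that $F$ itself — not merely some flat asymptotic to it — is periodic requires ruling out that $F$ has a ``core'' differing from its periodic model, which is exactly the phenomenon exhibited by radial flats.

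That identifies (i), the exclusion of radial flats, as the main obstacle. A radial flat has a bounded core surrounded by $m\ge1$ unbounded wedge-shaped regions whose bounding rays emanate from the core, and Proposition~\ref{prop:ft-radial} shows such flats genuinely occur as abstract $\cat(0)$ triangle-square complexes, so the only leverage is the ambient $\widetilde K$. The natural attack runs the limiting argument at a deep point of a wedge to get a pure plane $P\subset\widetilde K$ asymptotic to that wedge, then slides $P$ back toward the core: convexity of the wedge forces the interface between $P$ and its neighbour in $F$ either to be a straight triangle–square line — which by Lemma~\ref{lem:striped} would make $F$ striped, not radial — or to meet a corner vertex, and finite local complexity bounds the angular contributions of corner vertices, so after finitely many steps the wedge angles are pinned to values that cannot sum to $2\pi$ around the core unless every region is in fact a strip or half-plane. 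Converting this heuristic into a proof is the real work: one must control how the bounded core can absorb a mismatch between the finitely many corner configurations and the constraint $\sum\theta_i=2\pi$, and I expect a clean argument will need an honest invariant of the core — for instance a combinatorial Gauss–Bonnet tally over an exhausting sequence of balls, using that interior vertices of a flat have curvature exactly $0$ — rather than the soft limiting sketch above.
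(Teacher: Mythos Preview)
The statement you are attempting to prove is labeled a \emph{Conjecture} in the paper, and the paper gives no proof of it; it is explicitly posed as an open problem immediately after Proposition~\ref{prop:ft-radial}. So there is no paper proof to compare against, and the relevant question is whether your proposal actually closes the gap.

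It does not, and to your credit you largely say so yourself: you describe step~(i) as ``the main obstacle'' and call your argument there a ``heuristic'' whose conversion into a proof ``is the real work,'' and for step~(iii) you flag that transporting periodicity from a limit flat back to $F$ is ``delicate.'' These are not minor loose ends but the entire content of the conjecture. Let me be concrete about why the sketch for~(iii) cannot be completed as written. Finite local complexity of $F$ --- the fact that only finitely many $R$-ball types occur --- does \emph{not} imply periodicity: aperiodic tilings of finite local complexity are a well-studied phenomenon, and nothing in your pigeonhole step rules them out. Your Gromov--Hausdorff limit produces at best a periodic flat $F_\infty\subset\widetilde K$ asymptotic to $F$ along some direction; it does not force $F=F_\infty$, and indeed radial flats show exactly how a flat can agree with a periodic model on arbitrarily large balls yet differ globally. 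The same circularity infects step~(i): your proposed attack on radial flats slides a limit pure plane $P$ back toward the core and appeals to finite local complexity to ``pin'' wedge angles, but finite local complexity only constrains local pictures, not the global arrangement of finitely many corner vertices around a bounded core, so there is no mechanism forcing the angle sum to fail.

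In short, your reduction of the immersion statement to the embedding statement is fine, and your trichotomy into pure/striped, radial, and crumpled cases is the natural one; but the substantive steps~(i)--(iii) remain at the level of a research plan rather than a proof, which is consistent with the paper's own assessment that this is an open conjecture.
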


On the other hand, since arbitrarily large portions of this example
can occur in potentially periodic flats, this example does show that
it is not possible to establish a global value $k_0$ such that pairs
of Gersten-Short geodesics that start and end within one unit of each
other in the universal cover of \emph{any} compact nonpositively
curved triangle-square complex $K$ sychronously $k_0$-fellow travel.
This is in contrast with the pure cases studied by Gersten and Short
where such a global value does indeed exist.

%%%%%%%%%%%%%%%%%%%%%%%%%%%%%%%%%%%%%%%%%%%%%%%%%%%%%%%%%%%%%%%%%%
\section{Geodesics in periodic flats}\label{sec:periodic-flats}
%%%%%%%%%%%%%%%%%%%%%%%%%%%%%%%%%%%%%%%%%%%%%%%%%%%%%%%%%%%%%%%%%%

At this point we have shown that Gersten-Short geodesics are
well-behaved in pure flats and striped flats but they need not fellow
travel in radial flats.  More over, we have conjectured
(Conjecture~\ref{conj:aperiodic}) that the only flats that occur in
the universal cover of a compact nonpositively curved triangle-square
complex are ones that are potentially periodic.  In this section we
prove that Gersten-Short geodesics are well-behaved in every
potentially periodic flat $F$ by extending the earlier results for
pure flats and striped flats to flats that are thoroughly crumpled
(Proposition~\ref{prop:aperiodic-flats}).  The argument is of
necessity more delicate since the fellow traveling constant $k$ must
now vary depending on the fine structure of the flat $F$ under
consideration.  We begin by imposing order on the wide variety of
triangle-square flats in a slightly surprising fashion.  In
particular, we prove that every triangle-square flat embeds in a
single $4$-dimensional \pe complex, the product $\E \times \E$ of two
Eisenstein planes.  This is a tiling of $\R^4$ by $4$-polytopes that
are the metric product of two equilateral triangles.

\begin{defn}[Projections]
  Let $F$ be a triangle-square flat arranged so that it is colored
  (Definition~\ref{def:colors}) and imagine altering the metric on $F$
  as follows.  Let every red edge have length $s$, let every blue edge
  length $t$ and keep all of the angles the same.  Under this new
  metric the red and blue triangles are rescaled equilateral triangles
  and the yellow squares become $s$ by $t$ rectangles.  For every
  choice of positive reals $s$ and $t$ the resulting metric space
  remains a flat, i.e. isometric to $\R^2$.  Next, consider the
  limiting case where $s$ shrinks to $0$ while $t$ is fixed at $1$.
  The end result is (a subcomplex of) the Eisenstein tiling $\E$ and
  it is the same as the quotient of $F$ that collapses red edges and
  red triangles to points, and collapses every yellow square to a blue
  edge.  Visually, what happens is that the blue triangles in $F$
  slide together.  See Figure~\ref{fig:projections}.  Similarly,
  reversing the roles the colors play slides the red triangles
  together to produce a second projection from $F$ to (a rotated
  version of) an Eisenstein tiling $\E$.  We call these projections
  $p_b: F \longrightarrow \E$ and $p_r: F \longrightarrow \E$ where
  the subscript indicates the color of the edges that survive.
  Although there are minor ambiguities in the definition of these maps
  (such as which vertex is sent to the origin and which edges have
  horizontal images), they are not substantial since any two possible
  definitions of $p_b$ or $p_r$ differ by a cellular isometry $\E \to \E$.
\end{defn}

\begin{figure}
  \begin{center}
    \begin{tabular}{ccc}
      \includegraphics[scale=0.45]{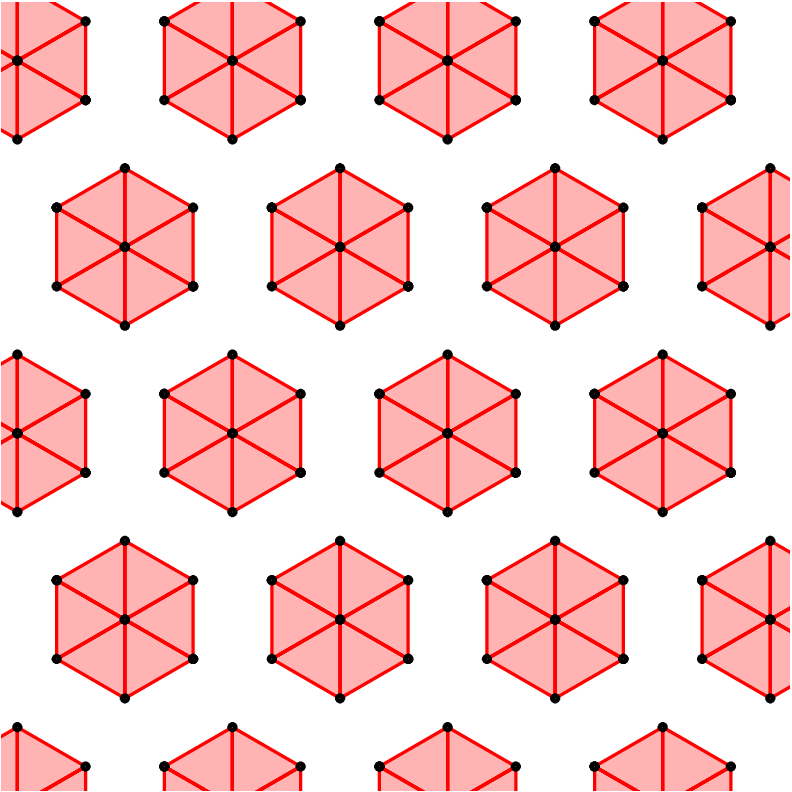} &
      \includegraphics[scale=0.45]{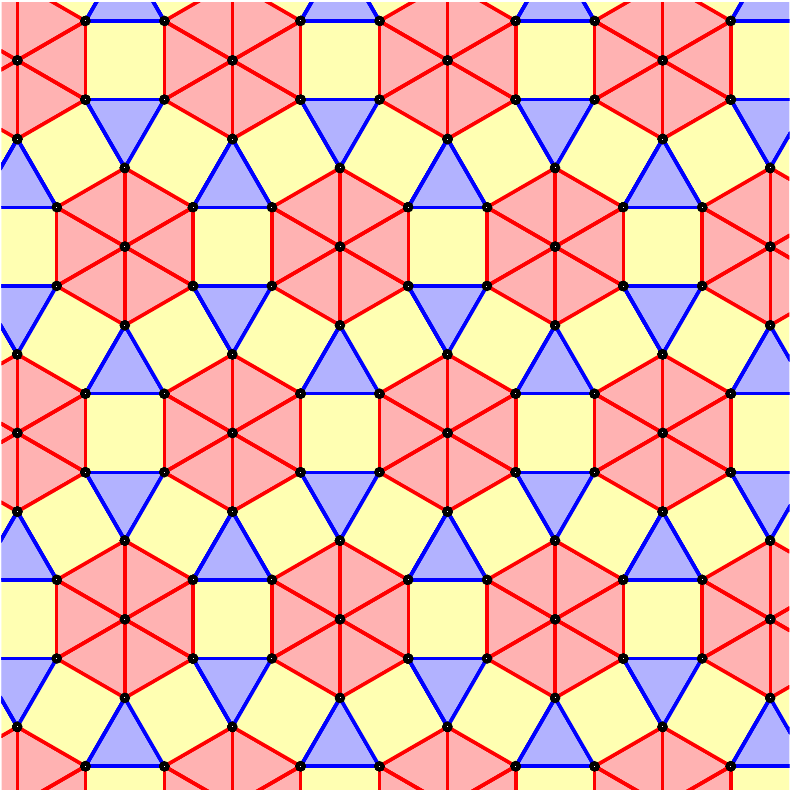} & 
      \includegraphics[scale=0.45]{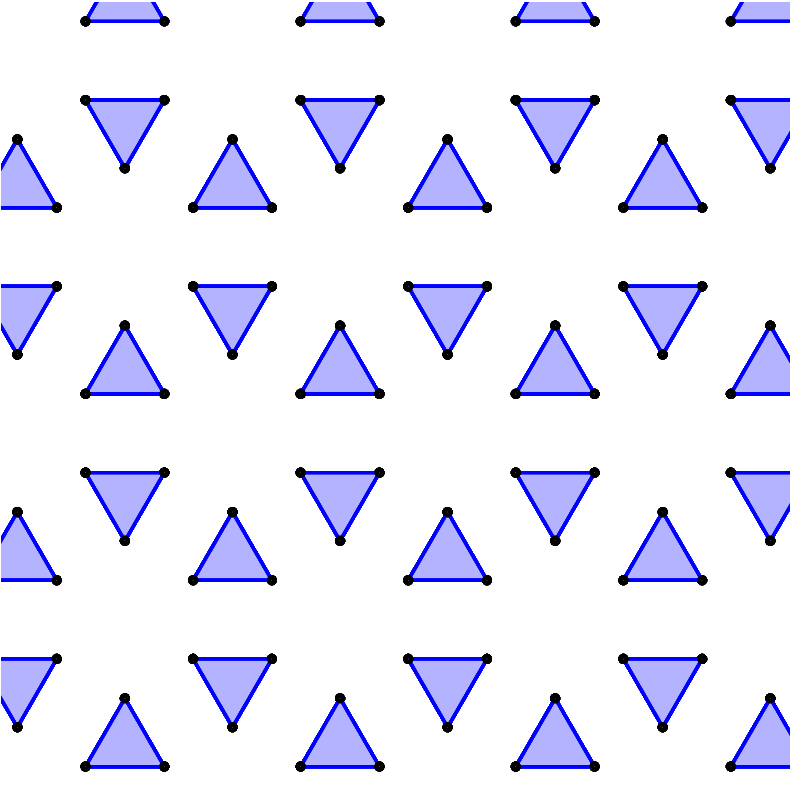}
    \end{tabular}
  \end{center}
  \caption{A periodic flat with its ``red'' and ``blue'' triangles
    shown on either side.\label{fig:projections}}
\end{figure}

Locally it is clear that adjacent vertices are identified under one of
these projections iff the edge connecting them is one of those that
collapses and two sides of a $2$-cell are identified iff they are
opposite sides of a square and of the non-collapsing color.  The
following proposition is the global version of these observations.

\begin{prop}[Identifications]\label{prop:identifications}
  Let $F$ be a colored triangle-square flat and let $p_r$ and $p_b$ be
  the red and blue projections defined above. If $u$ and $v$ are
  vertices of $F$, then $p_r(u) = p_r(v)$ iff there is a pure blue
  path in $F$ from $u$ to $v$ and $p_b(u) = p_b(v)$ iff there is a
  pure red path in $F$ from $u$ to $v$.  More generally, the preimage
  of vertex in $\E$ is a (possibly degenerate) triangular region and
  the preimage of an edge is a (possibly degenerate) strip of squares.
\end{prop}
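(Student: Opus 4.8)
The plan is to prove both halves simultaneously, since $p_r$ and $p_b$ play symmetric roles; I will argue the statement for $p_r$ and note that interchanging the two colors gives the statement for $p_b$. First I would establish the "local step": two adjacent vertices $u,v$ of $F$ satisfy $p_r(u)=p_r(v)$ if and only if the edge $uv$ is blue (equivalently, one of the edges that $p_r$ collapses) or $uv$ is a blue edge appearing as the image under the collapse of a yellow square, i.e.\ $u$ and $v$ are the two endpoints of a blue edge obtained by crushing a square. This is exactly the observation recorded just before the proposition; I would simply expand it to a complete case check using the list of four vertex neighborhoods from the "Vertices in flats" definition, noting that in each local picture the set of edges that $p_r$ collapses around a vertex forms either the empty set, a single point's worth of incident red edges, or the boundary data of a triangular region or square strip. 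The upshot of the local step is that $p_r(u)=p_r(v)$ for adjacent $u,v$ precisely when $u$ and $v$ lie in the same triangular region or the same square (as the two endpoints of a collapsing diagonal pair), which in either case means they are joined by a pure blue path of length one (an edge of that triangular region or the blue edge of that square).

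Next I would upgrade from adjacency to arbitrary pairs. For the "if" direction: a pure blue path from $u$ to $v$ maps under $p_r$ to a constant path, since every blue edge collapses and $p_r$ is continuous and cellular, so $p_r(u)=p_r(v)$. For the "only if" direction, suppose $p_r(u)=p_r(v)$. I would connect $p_r(u)$ to itself is trivial, so instead I would connect $u$ to $v$ by an arbitrary edge path $\gamma$ in $F$; its image $p_r(\gamma)$ is a closed edge path in $\E$ based at $p_r(u)=p_r(v)$. Because $\E$ is a $\cat(0)$ complex, hence contractible and simply connected, $p_r(\gamma)$ bounds a reduced disc diagram over $\E$ (Van Kampen's Lemma). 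The heart of the argument is then to lift this filling back through $p_r$: I would show that the preimage of a vertex of $\E$ under $p_r$ is a connected subcomplex on which $F$ restricts to a single triangular region (possibly degenerate, i.e.\ a single vertex or edge), and the preimage of an edge of $\E$ is a strip of squares (possibly degenerate). Granting this, the combinatorics of the collapse force $u$ and $v$ to lie in the same fiber component, and since that component is a triangular region, any edge path within it can be taken to be pure blue — giving the desired pure blue path. Equivalently, and perhaps more cleanly, I would argue directly: track $\gamma$ and repeatedly use the local step to homotope $\gamma$, rel endpoints, across collapsing cells, reducing the length of $p_r(\gamma)$ until it is constant, at which point $\gamma$ has been replaced by a pure blue path.

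For the "more generally" sentence I would argue as follows. Fix a vertex $w$ of $\E$ and consider $p_r^{-1}(w)$. By the fiber-connectivity just established it is a connected subcomplex of $F$; by the local step every edge in it is blue and every $2$-cell in it is a blue triangle (a yellow square would have one vertex off the fiber). So $p_r^{-1}(w)$ is a connected union of blue triangles sharing edges, which by the definition of regions is contained in a single triangular region $R$; conversely every vertex of $R$ maps to $w$ since $R$ is connected by blue edges. Hence $p_r^{-1}(w)=R$, a triangular region, which may degenerate to an edge or a vertex. For an edge $f$ of $\E$, the preimage $p_r^{-1}(f)$ consists of the yellow squares whose non-collapsing (blue) pair of edges map onto $f$, together with the blue edges that are the fibers of the two endpoints' worth of boundary; using convexity of regions (from the "Vertices in flats" discussion) and the local adjacency structure, these squares line up edge-to-edge along $f$ to form a strip $[0,n]\times\R$-type subcomplex, degenerating to a single blue edge when no square sits over $f$.

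The main obstacle I expect is the lifting/fiber-connectivity claim: showing that $p_r^{-1}(\text{vertex})$ is connected and is exactly a triangular region, and correspondingly that a closed loop in $\E$ in the image of $p_r$ can be pulled back to a loop in $F$ that is homotopic rel basepoint to a pure blue path. The subtlety is that $p_r$ is very far from a covering map — it collapses cells — so one cannot lift paths naively; instead one must use that $F$ is a flat (so global convexity of regions is available from the local vertex analysis) to rule out a triangular region "wrapping around" and meeting a fiber in two components. I would handle this by invoking Lemma~\ref{lem:striped}-style convexity together with the fact that $F\cong\R^2$ is simply connected, so that the preimage of a point cannot separate $F$ in a way that would create multiple components. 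Once convexity of regions is in hand the rest is bookkeeping.
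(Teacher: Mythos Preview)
The paper does not actually supply a proof of this proposition: it records the local observation in the paragraph immediately preceding the statement and then asserts the proposition as ``the global version of these observations,'' moving directly to its corollary. So there is no argument in the paper to compare yours against, and the question is simply whether your outline stands on its own.

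Your ``if'' direction is fine, though the phrasing of the local step is muddled: under $p_r$ it is the blue edges that collapse, so for adjacent $u,v$ one has $p_r(u)=p_r(v)$ iff the edge $uv$ is blue---the additional clause about ``a blue edge obtained by crushing a square'' is redundant. The real content is the ``only if'' direction, and here there is a genuine gap. You correctly isolate fiber-connectivity as the crux, but neither of your two approaches closes it. The Van Kampen route founders exactly where you say it does: $p_r$ collapses cells, so there is no lifting lemma. The ``homotope $\gamma$ across collapsing cells to shorten $p_r(\gamma)$'' route is circular. Pushing $\gamma$ across a blue triangle does not change $p_r(\gamma)$ at all, and pushing across a square replaces (blue, red) by (red, blue), which also leaves $\lvert p_r(\gamma)\rvert$ unchanged. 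To genuinely shorten $p_r(\gamma)$ you must cancel a backtrack in the image, i.e.\ find two red edges $r_1,r_2$ of $\gamma$ with $p_r(r_1)=p_r(r_2)^{-1}$ and replace the subpath between their far endpoints by a pure blue path---but the existence of that blue path is exactly an instance of the statement you are proving. Your fallback to ``convexity of regions plus simple-connectedness of $F$'' is too vague: those facts alone do not obviously forbid two distinct blue components from having the same $p_r$-image.

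A clean argument that avoids all of this uses one observation you have not made: on every red triangle the map $p_r$ is affine with linear part equal to the \emph{identity} (this is the precise content of ``the red triangles slide together'' as the blue cells shrink). Hence along the straight segment $[u,v]\subset F\cong\R^2$ in direction $d=v-u$, the $d$-component of $p_r$ is nondecreasing: it grows at rate $\lvert d\rvert^2$ inside red triangles, at rate $\langle d,r\rangle^2$ inside a square with red direction $r$, and at rate $0$ inside blue triangles. If $p_r(u)=p_r(v)$ the total increment vanishes, so the segment meets no open red triangle and meets each square only parallel to its blue side. The segment therefore lies in the blue subcomplex (closed blue triangles together with blue edges), and a routine homotopy there yields the pure blue edge path. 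The same monotonicity immediately gives the edge-preimage statement as well.
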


As a corollary we note that crumpled plane projections are onto.

\begin{cor}[Projections onto $\E$]\label{cor:project-onto}
  If $F$ is a colored crumpled triangle-square flat then the
  projections $p_r$ and $p_b$ are onto maps.
\end{cor}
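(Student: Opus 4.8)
The plan is to show $p_b$ is onto; the argument for $p_r$ is the mirror image, with the two colours exchanged. Since $p_b$ is cellular its image $p_b(F)$ is a subcomplex of $\E$, and since $F$ is connected so is $p_b(F)$. First I would record that $p_b(F)$ contains at least one triangle of $\E$. A crumpled flat is neither pure (its unique region would be unbounded) nor striped (by Lemma~\ref{lem:striped} its regions would be infinite strips or half-planes, again unbounded), so $F$ contains a corner vertex; the local picture at a corner vertex forces triangles of both colours to occur there (the triangle wedged between the two squares is monochromatic since all three of its edges share a colour, and the alternation of colours along the two squares then forces the remaining two triangles to be the other colour). In particular $F$ has a blue triangle, which survives $p_b$ and maps onto a triangle of $\E$, so $p_b(F)$ is $2$‑dimensional. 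Because the triangles of $\E$ form a connected graph under edge–adjacency, and a subcomplex of $\E$ containing all $2$‑cells is all of $\E$, it now suffices to prove the propagation statement: \emph{if a triangle $T$ lies in $p_b(F)$ and $e$ is an edge of $T$, then the other triangle $T'$ of $\E$ on $e$ lies in $p_b(F)$ as well.}

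To prove propagation, write $T=p_b(\Delta)$ for a blue triangle $\Delta$ of $F$ (triangles of $\E$ can only arise as images of blue triangles, since squares collapse to edges and red triangles to vertices), and let $\bar e$ be the unique edge of $\Delta$ with $p_b(\bar e)=e$. By Proposition~\ref{prop:identifications} the fibre $\Sigma:=p_b^{-1}(e)$ is a (possibly degenerate) strip of squares, and here is where crumpledness is essential: since every region of $F$ is bounded, $\Sigma$ is a \emph{finite} strip, i.e.\ a combinatorial rectangle $S_1\cup\dots\cup S_m$ whose consecutive squares are glued along blue edges, with two short sides that are blue edges each mapping onto $e$ (when $m=0$, $\Sigma$ degenerates to the single edge $\bar e$). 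Now $\bar e$ lies in $\Sigma$, and it cannot be one of the interior cross‑sections of $\Sigma$, since both $2$‑cells meeting such an edge lie in $\Sigma$ whereas $\Delta$ does not; hence $\bar e$ is one of the two short sides. Let $\bar f$ be the opposite short side and $\Delta''$ the $2$‑cell of $F$ lying across $\bar f$ from $\Sigma$. That cell meets a blue edge, is not a square (a square on $\bar f$ would collapse to $e$ and so lie in $\Sigma$), and is not a red triangle (which has only red edges), so $\Delta''$ is a blue triangle and $p_b(\Delta'')$ is a triangle of $\E$ containing $e$; thus $p_b(\Delta'')\in\{T,T'\}$.

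It remains to exclude $p_b(\Delta'')=T$, and this is the only genuinely delicate point: one must know that $p_b$ does not fold $F$ back on itself across the collapsing rectangle $\Sigma$. I would deduce this from the construction of $p_b$ as a limit of the Euclidean metrics on $F$ in which every red edge has length $s$ and every blue edge has length $1$: for each $s>0$ this is an honest flat metric on $F$, the identity is an orientation‑preserving homeomorphism of $F$ onto the rescaled plane, and as $s\to 0$ the rectangle $\Sigma$ shrinks onto the segment $e$ while the two outward directions across its two short sides $\bar e$ and $\bar f$ are pushed to the two genuinely opposite sides of $e$. Concretely, a transverse arc from the interior of $\Delta$ to the interior of $\Delta''$ meets $\Sigma$ once, and its image meets $e$ once, so $p_b(\Delta)$ and $p_b(\Delta'')$ lie on opposite sides of $e$ in $\E$; hence $\{p_b(\Delta),p_b(\Delta'')\}=\{T,T'\}$ and $T'\in p_b(F)$. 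Combined with the first paragraph this forces $p_b(F)=\E$, and the same argument with the colours exchanged gives $p_r(F)=\E$. The non‑folding step is the main obstacle; everything else is bookkeeping with Proposition~\ref{prop:identifications} and the local structure of flats. (Alternatively, one can package the non‑folding step by noting that every fibre of $p_b$ is compact and convex, so $p_b$ is a monotone map of the plane onto its image, and appealing to the standard behaviour of such maps on surfaces.)
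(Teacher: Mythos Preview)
Your argument is correct and follows essentially the same route as the paper: use Proposition~\ref{prop:identifications} to see that the preimage of an edge in the image is a strip of squares, invoke crumpledness to make that strip finite, and then use the (blue) triangles at the two ends of the strip to conclude that both $2$-cells of $\E$ adjacent to $e$ lie in the image, so the image has no boundary and equals $\E$. The paper's proof is just this, stated in three sentences.

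Where you differ is in thoroughness rather than strategy. You explicitly verify two points the paper takes for granted: first, that the image contains a $2$-cell to start the propagation (your corner-vertex argument; the paper simply begins with an edge in the image and lets the end-triangles supply the first $2$-cell); second, and more interestingly, the non-folding step ensuring the two end-triangles land on \emph{opposite} sides of $e$ rather than the same side. The paper's sentence ``the triangles on either end of this strip show that $e$ is in the interior of the image'' silently uses this, presumably regarding it as part of the construction of $p_b$ as a limit of flat metrics. Your justification via the family of orientation-preserving plane homeomorphisms is the right way to make that honest, and your remark that the fibres of $p_b$ are compact and convex (hence $p_b$ is monotone) is an equally clean alternative. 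So: same proof, with a genuine subtlety correctly identified and handled.
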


\begin{proof}
  Let $e$ be an edge in the image of $F$ under one of the projection
  maps.  By Proposition~\ref{prop:identifications} its preimage is a
  strip of squares and because $F$ is crumpled, this strip is finite.
  The triangles on either end of this strip show that $e$ is in the
  interior of the image of $F$.  Thus the image has no boundary edges
  and must therefore be all of $\E$.
\end{proof}

A more precise statement would be that both projections are onto iff
every square region of $F$ is bounded.  We are now ready to show that
every triangle-square flat, no matter how complicated, can be embedded
in a direct product of two Eisenstein planes.

\begin{thm}[Embedding flats into $\E \times \E$] \label{thm:flat-embedding}
  Let $F$ be a colored triangle-square flat and let $p_b$ and $p_r$ be
  the blue and red projections defined above.  If $p: F\to \E \times
  \E$ is the map sending $x$ to the ordered pair $(p_b(x),p_r(x))$,
  then $p$ is a piecewise linear cellular map that embeds $F$ into the
  $2$-skeleton of the $4$-dimensional complex $\E \times \E$ built out
  of copies of the $4$-polytope formed as a direct product of two
  equilateral triangles.
\end{thm}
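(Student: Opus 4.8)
The plan is to establish the three claims in turn: that $p$ is a well-defined piecewise linear cellular map, that it is injective on vertices, and that it is an embedding of the $2$-skeleton of $\E\times\E$ (in particular that no additional identifications happen on edges and $2$-cells and that the images of distinct cells meet only along shared faces). The local picture is essentially free: each of $p_b$ and $p_r$ is a quotient that is linear on every cell (collapsing a triangle or square to a point, an edge, or an equilateral triangle), so the pair map $p$ is piecewise linear and cellular, and on each triangle or square of $F$ one of the two coordinate maps is an isometric embedding onto an equilateral triangle while the other collapses appropriately; in all cases the image is a genuine cell of $\E\times\E$ (a point times a triangle, an edge times an edge, or a triangle times a point), lying in the $2$-skeleton.

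The heart of the argument is injectivity on vertices, and this is exactly where Proposition~\ref{prop:identifications} does the work. Suppose $p(u)=p(v)$, so $p_b(u)=p_b(v)$ and $p_r(u)=p_r(v)$. By Proposition~\ref{prop:identifications}, the first equality gives a pure red path from $u$ to $v$ in $F$, and the second gives a pure blue path from $u$ to $v$ in $F$. Concatenating the red path with the reverse of the blue path yields a closed loop in $F$; since $F\cong\R^2$ is simply connected this loop bounds, and I want to argue that a nontrivial loop made of a pure red arc followed by a pure blue arc is impossible unless both arcs are trivial. The cleanest way is to appeal to the convexity of regions and the structure of vertices in flats: a pure red path stays within (the closure of) a single chain of red triangles, i.e. within the preimage $p_b^{-1}$ of a single point — wait, more carefully, a pure red path lies in $p_b^{-1}(\text{edge or vertex})$, hence in a single strip of squares together with its bounding red triangles — and its red-coordinate image $p_r$ moves monotonically along that strip, so a pure red path from $u$ to $v$ with $p_r(u)=p_r(v)$ must already be trivial (it cannot return to its starting $p_r$-value without backtracking, and a geodesic-free argument or the convexity of the strip forbids that). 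The same reasoning with colors swapped handles the blue path. Thus $u=v$.

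Once vertices are separated, the rest follows by a dimension/local-injectivity bookkeeping argument. Distinct edges of $F$ have distinct endpoint pairs, so they have distinct images; and two $2$-cells of $F$ sharing no face have, by the vertex-injectivity just proved together with the local description above, images that are distinct cells of $\E\times\E$ meeting only in their common image faces — here one uses that $\E\times\E$ is itself a \pe complex (a tiling of $\R^4$), so its cells intersect in faces, and that $p$ restricted to the star of any vertex of $F$ is injective, which can be read off from the four possible vertex neighborhoods in Figure~\ref{fig:vtypes} by checking in each case that the red and blue coordinates together distinguish the incident corners. Finally, a continuous injection of the plane $F$ that is a local embedding (injective on each closed star) is a topological embedding, and since it is cellular and piecewise linear it is a cellular embedding into the $2$-skeleton.

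I expect the main obstacle to be the global injectivity step — specifically, ruling out the "pure red arc then pure blue arc" closed loop cleanly. The subtlety is that a pure red path is not confined to one triangular region (it can pass through squares whose red edges it uses), so one genuinely needs the strip structure from Proposition~\ref{prop:identifications} and the monotonicity of the non-collapsing coordinate along a strip, rather than a naive "stays in one region" argument. Everything else is local and combinatorial, essentially a careful reading of the four vertex types.
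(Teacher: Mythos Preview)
Your overall strategy matches the paper's: reduce to vertex injectivity and use Proposition~\ref{prop:identifications}. The paper's entire proof is essentially your middle paragraph, and the extra bookkeeping you do on edges, $2$-cells, and stars is more than the paper bothers with.

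Where you go astray is precisely at your ``wait, more carefully'' moment, and the anticipated obstacle you flag at the end is the same misstep. Your first instinct was right and your revision is wrong. A pure red path has \emph{constant} $p_b$-image: every red edge collapses to a point under $p_b$, so traversing red edges never moves you in the blue coordinate. Hence a pure red path lies in $p_b^{-1}(\text{a single vertex})$, which by Proposition~\ref{prop:identifications} is one (possibly degenerate) red triangular region --- not a strip of squares. Your worry that ``a pure red path is not confined to one triangular region (it can pass through squares whose red edges it uses)'' is unfounded: a red edge of a square is also an edge of an adjacent red triangle (check the four vertex types), so it already lies in the red region; and crossing to the \emph{opposite} red edge of that square would require a blue edge.

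Once you see that the path sits inside a single red triangular region, the finish is one line --- and this is how the paper does it, with colors swapped. The projection $p_r$ collapses nothing in a red triangular region, so it is an isometric embedding there; in the paper's words, the monochromatic path ``remains rigid and unchanged'' under the same-colored projection. Thus $p_r(u)=p_r(v)$ forces $u=v$. No closed-loop argument, no simple connectivity, no monotonicity along strips is needed.
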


\begin{proof}
  It is easy to check that vertices, edges, triangles and squares in
  $F$ are sent to vertices, edges, triangles and squares in
  $\E\times\E$.  Thus the only assertion that needs to be verified is
  that $p$ is an embedding.  In fact, it is sufficient to show that
  $p$ is injective on the vertices of $F$.  By
  Proposition~\ref{prop:identifications}, if $u$ and $v$ are distinct
  vertices of $F$ with $p_r(u)=p_r(v)$, then there is a combinatorial
  path in $F$ connecting them that consists solely of blue edges.  But
  this blue path remains rigid and unchanged under the projection
  $p_b$.  In other words, when $p(u)$ and $p(v)$ agree in the second
  coordinate, their first coordinates disagree.  Repeating this
  argument with the colors reversed shows that distinct vertices are
  sent to vertices that might agree on one coordinate but cannot agree
  on both.
\end{proof}

In fact, the embedding of $F$ into $\E\times \E$ satisfies an even
stronger condition of being an isometric embedding with respect to the
$1$-skeleton metric.  Although true for arbitrary flats, we restrict
our attention to flats that are crumpled.

\begin{thm}[Embedding flats isometrically]\label{thm:isom-embed}
  If $F$ be a colored crumpled triangle-square flat and $p:F\to
  \E\times\E$ is the embedding described above, then $p$ preserves
  combinatorial distances between vertices.  In particular, $p$
  isometrically embeds the $1$-skeleton of $F$ into the $1$-skeleton
  of $\E\times \E$.
\end{thm}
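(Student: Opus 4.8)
The plan is to show that for any two vertices $u$ and $v$ of $F$, the combinatorial distance $d_{\E\times\E}(p(u),p(v))$ equals $d_F(u,v)$. One inequality is free: since $p$ is a cellular map, it sends an edge path in $F$ from $u$ to $v$ to an edge path of the same length in $\E\times\E$, so $d_{\E\times\E}(p(u),p(v)) \le d_F(u,v)$. The whole content is the reverse inequality, namely that a geodesic edge path in $\E\times\E$ between $p(u)$ and $p(v)$ cannot be shorter than any edge path in $F$.

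First I would understand distances in $\E\times\E$ itself. In a product of two graphs (with the natural product $1$-skeleton, where each edge moves in exactly one factor), the combinatorial distance between $(a_1,a_2)$ and $(b_1,b_2)$ is $d_\E(a_1,b_1) + d_\E(a_2,b_2)$, since any edge path projects to paths in each factor and the two contributions are independent. Next I would compute distances in the Eisenstein plane: writing $p_b$ and $p_r$ for the two projections, I want $d_F(u,v) \le d_\E(p_b(u),p_b(v)) + d_\E(p_r(u),p_r(v))$. Combined with the previous paragraph this gives exactly the needed inequality, because $d_{\E\times\E}(p(u),p(v)) = d_\E(p_b(u),p_b(v)) + d_\E(p_r(u),p_r(v))$.

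So the heart of the matter is: given a geodesic $\bar\alpha$ in $\E$ from $p_b(u)$ to $p_b(v)$ and a geodesic $\bar\beta$ in $\E$ from $p_r(u)$ to $p_r(v)$, build an edge path in $F$ from $u$ to $v$ of length $\size{\bar\alpha} + \size{\bar\beta}$. The idea is to lift. By Proposition~\ref{prop:identifications} the preimage under $p_b$ of a vertex of $\E$ is a triangular region and the preimage of an edge is a strip of squares, both finite because $F$ is crumpled. I would lift $\bar\alpha$ step by step through these preimages: when $\bar\alpha$ crosses a blue edge of $\E$, that edge's preimage is a strip of blue triangles glued along blue edges, and one can pass from the current fiber (a triangular region) to the next by crossing exactly one blue edge of $F$; when $\bar\alpha$ traverses a blue edge that is a side of squares... wait — $p_b$ collapses yellow squares to blue edges, so crossing such an edge means walking along a square strip, each square traversed by one blue edge of $F$. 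Either way, each unit step of $\bar\alpha$ lifts to a single blue edge of $F$, giving a blue path in $F$ of length $\size{\bar\alpha}$ from $u$ to some vertex $u'$ with $p_b(u') = p_b(v)$, and since blue paths are rigid under $p_b$ this forces $p_r(u')$ to differ from $p_r(v)$ only in a controlled way — in fact, a pure blue path projects to a point under $p_b$ only if trivial, and here it preserves the $p_r$-image up to... Let me restate: the cleaner route is that a blue path doesn't change $p_b$-fiber membership structure but does realize the $\E$-geodesic $\bar\alpha$ in the $p_b$-image; then from $u'$ I follow a red path of length $\size{\bar\beta}$ realizing $\bar\beta$, landing at a vertex whose $p_b$-image is still $p_b(v)$ (red paths are collapsed by $p_b$, hence stay in the fiber) and whose $p_r$-image is now $p_r(v)$; since $p$ is injective (Theorem~\ref{thm:flat-embedding}) that vertex is $v$. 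This yields an edge path in $F$ of the required length.

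The main obstacle will be making the lifting argument precise and checking that each elementary step of an $\E$-geodesic genuinely lifts to one $F$-edge regardless of whether the $\E$-edge crossed is the image of a triangle edge or of a square (a collapsed yellow square) — and, crucially, that after lifting $\bar\alpha$ by a blue path and then $\bar\beta$ by a red path, the red path stays inside the $p_b$-fiber over $p_b(v)$ so that the endpoint is forced to be $v$ by injectivity. This amounts to carefully combining the local picture from the Identifications proposition (preimages of vertices are triangular regions, preimages of edges are square strips) with the finiteness guaranteed by crumpledness, so that no fiber or strip is infinite and every lift terminates. Once that bookkeeping is in place, the two inequalities match and the isometric embedding statement follows; the ``in particular'' clause is then immediate since preserving combinatorial distances between vertices is exactly isometry of $1$-skeleta with the path metric.
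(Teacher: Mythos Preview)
Your approach is genuinely different from the paper's, and it has a real gap in the lifting step.

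Both arguments share the easy direction ($p$ cellular gives $d_{\E\times\E}(p(u),p(v)) \le d_F(u,v)$) and both rely, explicitly or implicitly, on the product formula $d_{\E\times\E}(p(u),p(v)) = d_\E(p_b(u),p_b(v)) + d_\E(p_r(u),p_r(v))$.  For the reverse inequality, however, the paper argues by contradiction: take a minimal $F$-geodesic $\alpha$ whose image is not a geodesic; then one projection, say $p_b(\alpha)$, is a minimal non-geodesic in $\E$, and such a path has a very restricted shape (the old path of a trivial move, or a row of triangle-triangle moves capped by a triangle move).  The paper pulls back that small diagram to $F$ --- using crumpledness via Corollary~\ref{cor:project-onto} to guarantee the preimages are finite --- and reads off a strictly shorter path in $F$ with the same endpoints, contradicting that $\alpha$ was geodesic.

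Your direct construction, by contrast, asserts that ``each unit step of $\bar\alpha$ lifts to a single blue edge of $F$,'' and this is where it breaks.  If the current vertex lies in the interior of a \emph{red} triangular region it has no blue edges whatsoever, so no blue step is available and the pure-blue lift of $\bar\alpha$ cannot even begin.  Even at boundary vertices only some of the six blue directions are realized (four at a side vertex, for instance), and the direction demanded by $\bar\alpha$ may be among the missing ones.  (Incidentally, by Proposition~\ref{prop:identifications} the $p_b$-preimage of an edge of $\E$ is a strip of \emph{squares}, not of blue triangles; crossing it costs one blue edge only if you are already standing at the correct rung.)  So the ``all blue, then all red'' path from $u$ to $v$ need not exist.

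One might hope to repair this by interleaving colors --- at each step take whichever of a blue or red edge decreases $d_{\E\times\E}(p(\cdot),p(v))$ --- but this requires showing that such an edge exists at every vertex, and that is a nontrivial case analysis you have not supplied (at a vertex in the interior of a square region, for example, only two of the six blue directions and two of the six red directions are present).  The paper's minimal-counterexample argument sidesteps this difficulty entirely by pushing all the work into the well-understood structure of shortest non-geodesics in the Eisenstein plane.
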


\begin{proof}
  Because $p$ is a cellular map, the distance between vertices in $F$
  is at least as large as the distance between their images in $\E
  \times \E$.  Thus, if the theorem fails, there is a geodesic
  $\alpha$ in $F$ whose image is not a geodesic in $\E\times \E$ and
  thus not a geodesic under one of the projection maps.  Arguing by
  contradiction, let $\alpha$ a minimal such path and assume, without
  loss of generality, that its image is not geodesic under the
  projection $p_b$.  The only possibilities are that $p_b(\alpha)$ is
  the old path of a trivial move or it looks like the old path in a
  doubly-based diagram similar to the one shown on the right in
  Figure~\ref{fig:e-shortening}, i.e. one that can be shortened using
  a sequence of triangle-triangle moves followed by a triangle move,
  (and this includes the old path of triangle move as a degenerate
  case).  That these are the only two possibilities is a relatively
  straightforward consequence of applying
  Theorem~\ref{thm:straightening} to paths in $\E$.  When the image is
  the old path of a trivial move, the preimage of the edge traversed
  is a finite strip of squares in $F$ and $\alpha$ traverses three
  sides of this rectangle.  The remaining side shows that $\alpha$ was
  not a geodesic in $F$, contradiction.  When the projection of
  $\alpha$ looks like the old path of the diagram on the right in
  Figure~\ref{fig:e-shortening} we consider the preimage of the
  diagram $D$ bounded by $p_b(\alpha)$ and the straight line geodesic
  path that connects its two endpoints.  For concreteness, assume that
  this geodesic is horizontal.  By Corollary~\ref{cor:project-onto}
  the projection $p_b$ is onto and by
  Propostion~\ref{prop:identifications} the preimage contains a
  portion that looks like the diagram on the left in
  Figure~\ref{fig:e-shortening} (i.e. triangles connected by finite
  strips of squares).  Since the path $\beta$ along the bottom of this
  diagram consists solely of edges that are either horizontal or an
  angle of $\frac{\pi}{6}$ from horizontal, its projections are
  geodesics which implies that it is sent to a geodesic in $\E \times
  \E$ and thus a geodesic in $F$.  Moreover, $\beta$ is strictly
  shorter than $\alpha$, since its $p_b$ projection is strictly
  shorter and its $p_r$ projection is a geodesic.  But this means that
  $\alpha$ is not a geodesic in $F$, contradiction.
\end{proof}

\begin{figure}
  \begin{center}
    \begin{tabular}{cc}
      \begin{tabular}{c}\includegraphics[scale=.8]{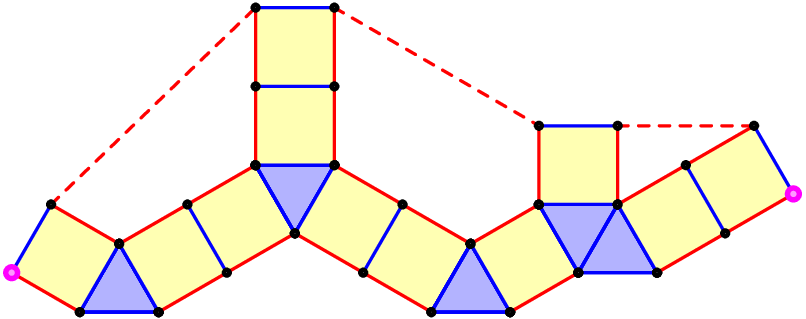}\end{tabular} &
      \begin{tabular}{c}\includegraphics[scale=.8]{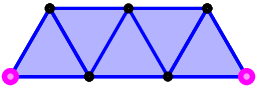}\end{tabular}
    \end{tabular}
  \end{center}
  \caption{The diagram on the right is a doubly-based disc diagram
    whose old path can be shortened using two triangle-triangle moves
    and one triangle move.  The diagram on the left shows a portion of
    a possible preimage of this diagram in a triangle-square flat $F$
    with respect to the projection map $p_b$.  The dashed red lines
    represent paths in the missing red triangular
    regions.\label{fig:e-shortening}}
\end{figure}

By Theorem~\ref{thm:flat-embedding} geodesics in the image of a flat
correspond exactly with geodesics in the domain, making the following
corollary immediate.

\begin{cor}[Images of intervals]\label{cor:interval-images}
  Let $F$ be a colored crumpled triangle-square flat and let $p$ be
  its embedding into $K=\E\times \E$.  For all vertices $u$ and $v$ in
  $F$ with images $u'$ and $v'$ in $K$, the image of the interval
  $F[u,v]$ is the intersection of the interval $K[u',v']$ with the
  embedded flat $p(F)$.
\end{cor}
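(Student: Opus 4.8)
The plan is to reduce the statement to the distance-preservation property of $p$ established in Theorem~\ref{thm:isom-embed}, together with the elementary observation that $F[u,v]$, $K[u',v']$, and the image $p(F)$ are all full subcomplexes, hence completely determined by their vertex sets. First I would record the consequence of Theorem~\ref{thm:isom-embed} at the level of betweenness. Since $p$ preserves combinatorial distance between vertices, for every vertex $w$ of $F$ we have $d(u',p(w)) = d(u,w)$, $d(p(w),v') = d(w,v)$, and $d(u',v') = d(u,v)$ (all distances taken in the respective $1$-skeleton metrics). Therefore $d(u,w) + d(w,v) = d(u,v)$ holds in $F$ if and only if $d(u',p(w)) + d(p(w),v') = d(u',v')$ holds in $K$; that is, $w$ is between $u$ and $v$ in $F$ exactly when $p(w)$ is between $u'$ and $v'$ in $K$. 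Since $p$ is injective, this says that $p$ restricts to a bijection from the vertex set of $F[u,v]$ onto the set of vertices of $p(F)$ that lie in $K[u',v']$.

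Next I would lift this to cells. By Theorem~\ref{thm:flat-embedding}, $p$ is a cellular embedding carrying each vertex, edge, triangle, or square of $F$ to a cell of the same type in $K = \E\times\E$, so the cells of the subcomplex $p(F)$ correspond bijectively to the cells of $F$ and carry over their vertex sets. Now $F[u,v]$ is by definition the full subcomplex of $F$ on the vertices between $u$ and $v$, so a cell $\sigma$ of $F$ lies in $F[u,v]$ precisely when all of its vertices do; likewise $K[u',v']$ is the full subcomplex of $K$ on the vertices between $u'$ and $v'$, so a cell $p(\sigma)$ of $p(F)$ lies in $K[u',v']$ precisely when all of its vertices do. Combining these two fullness conditions with the vertex bijection of the first step yields $\sigma \in F[u,v]$ if and only if $p(\sigma) \in K[u',v'] \cap p(F)$, which is exactly the asserted identity $p(F[u,v]) = K[u',v'] \cap p(F)$.

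There is no real obstacle here: the entire geometric content lies in Theorem~\ref{thm:isom-embed}, and what remains is bookkeeping with full subcomplexes. The only step deserving a moment's care is the reverse inclusion --- checking that a cell of $K[u',v']$ which happens to lie in $p(F)$ is the image of a cell of $F[u,v]$, and not merely of some cell of $F$ --- and this is precisely where one uses that $F[u,v]$ contains \emph{every} cell all of whose vertices are between $u$ and $v$.
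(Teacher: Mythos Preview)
Your proof is correct and follows the same approach as the paper: the paper simply declares the corollary ``immediate'' from the fact that $p$ sends geodesics to geodesics (i.e., from Theorem~\ref{thm:isom-embed}), and you have spelled out precisely why, via the betweenness equivalence on vertices and the fullness of the subcomplexes involved. Your explicit treatment of the reverse inclusion using fullness is exactly the content hidden in the paper's word ``immediate.''
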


This corollary is interesting, at least in part, because intervals in
$\E \times \E$ are extremely simple in form.

\begin{prop}[Intervals in $\E \times \E$]
  For all vertices $u$ and $v$ in the $4$-complex $K = \E \times \E$,
  the interval $K[u,v]$ is a (possibly degenerate) $4$-dimensional
  parallelepiped.
\end{prop}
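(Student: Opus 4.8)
The plan is to reduce the statement to the (essentially classical) fact that intervals in the Eisenstein plane $\E$ itself are parallelograms, exploiting the product structure of $K=\E\times\E$. First I would observe that the $1$-skeleton of $\E\times\E$ is the Cartesian product of the $1$-skeleton of $\E$ with itself, since an edge of $\E\times\E$ alters one $\E$-coordinate by a single edge of $\E$ and leaves the other fixed. Hence the $1$-skeleton metric is additive, $d_K\big((u_1,u_2),(v_1,v_2)\big)=d_\E(u_1,v_1)+d_\E(u_2,v_2)$, so a vertex $(w_1,w_2)$ is between $(u_1,u_2)$ and $(v_1,v_2)$ exactly when $w_1$ is between $u_1$ and $v_1$ and $w_2$ is between $u_2$ and $v_2$. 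Thus the vertex set of $K[u,v]$ is the product of the vertex sets of $\E[u_1,v_1]$ and $\E[u_2,v_2]$. Because every cell of $\E\times\E$ is a product $\tau_1\times\tau_2$ of cells of $\E$, and such a cell has all of its vertices in a product vertex set precisely when each $\tau_i$ has all of its vertices in the corresponding factor, taking full subcomplexes commutes with the product and $K[u,v]=\E[u_1,v_1]\times\E[u_2,v_2]$.

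Next I would establish the shape of an interval in $\E$: for any vertices $a,b$ of the standard triangular tiling, $\E[a,b]$ is a (possibly degenerate) parallelogram. After translating $a$ to the origin and applying a rotation of $\E$ about it by a multiple of $\tfrac{\pi}{3}$ (a cellular isometry), one may assume $b=pe_1+qe_2$ with $p,q\ge 0$, where $e_1,e_2$ are edge-vectors meeting at angle $\tfrac{\pi}{3}$ and $e_2-e_1$ is the third edge direction. A short computation with these three directions shows that the only way to realize the distance $d_\E(a,b)=p+q$ is to take $p$ steps in direction $e_1$ and $q$ steps in direction $e_2$; hence every combinatorial geodesic from $a$ to $b$ is a shuffle of these $p+q$ unit steps, and conversely every such shuffle has length $p+q$ and is therefore geodesic. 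The vertices lying on such geodesics are exactly $\{ce_1+de_2 : 0\le c\le p,\ 0\le d\le q\}$, and the full subcomplex on this set is the triangulated closed parallelogram with corners $a$, $a+pe_1$, $a+pe_1+qe_2$, $a+qe_2$ — a segment if $p$ or $q$ vanishes, a point if both do.

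Finally, a product of two (possibly degenerate) parallelograms is a (possibly degenerate) $4$-dimensional parallelepiped: if $P_i\subseteq\R^2$ is based at $c_i$ with edge-vectors $a_i,b_i$, then $P_1\times P_2\subseteq\R^2\times\R^2=\R^4$ is the parallelepiped based at $(c_1,c_2)$ with edge-vectors $(a_1,0),(b_1,0),(0,a_2),(0,b_2)$, which degenerates exactly when one of these vectors is zero. Combining this with the two previous paragraphs yields the proposition. The only step requiring genuine (if still elementary) work is the geodesic computation in $\E$; everything else is bookkeeping about products of cell complexes and their $1$-skeleton metrics. The one point to state with care is that the rotations of $\E$ used to normalize $b$ are cellular isometries, so that both ``being an interval'' and ``being a parallelogram'' are preserved under the reduction.
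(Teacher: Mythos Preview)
Your argument is correct and follows essentially the same approach as the paper: reduce via the product structure to show $K[u,v]=\E[u_1,v_1]\times\E[u_2,v_2]$, and then invoke the fact that intervals in $\E$ are (possibly degenerate) parallelograms. The paper's proof is much terser, simply asserting both of these facts, whereas you have supplied the details for each; in particular your explicit geodesic computation in $\E$ justifies a claim the paper leaves implicit.
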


\begin{proof}
  Because $K$ is a direct product, a vertex $w$ is between $u$ and $v$
  iff the image of $w$ is between the images of $u$ and $v$ under both
  projections $p_r$ and $p_b$.  In particular, the interval in $K$ is
  a direct product of two intervals in $\E$ and these are (possibly
  degenerate) $2$-dimensional parallelograms.
\end{proof}

With these tools in hand, we now shift our attention to flats that are
potentially periodic.  The self-similarity of the potentially
perioidic flat means that its image under the embedding described
above looks, roughly speaking, like an affine plane in $\R^4$, a
description that can be made precise using the notion of a
quasi-isometry.

\begin{defn}[Quasi-isometries] 
  Let $f:X\to Y$ be a function between metric spaces.  If there are
  constants $\lambda$ and $\delta$ such that for all $x,x' \in X$ with
  images $y, y' \in Y$ both $d_Y(y,y') \leq \lambda \cdot d_X(x,x') +
  \delta$ and $d_X(x,x') \leq \lambda \cdot d_Y(y,y') + \delta$, then
  $f$ is called a \emph{quasi-isometric embedding} of $X$ into $Y$. If
  $f$ is also \emph{quasi-onto} (i.e. there is a constant $c$ such
  that the $c$-neighborhood of $f(X)$ is all of $Y$) then $f$ is
  called a \emph{quasi-isometry} and $X$ and $Y$ are
  \emph{quasi-isometric}.
\end{defn}

Although it is not immediately obvious from this definition, the
relation of being quasi-isometric in an equivalence relation on metric
spaces.

\begin{prop}[Quasi-flats]\label{prop:quasi-flat}
  Let $F$ be a colored triangle-square flat and let $p:F\to \E \times
  \E$ be the corresponding embedding.  If $F$ is potentially periodic
  then its image $p(F)$ is quasi-isometric to a flat affine plane in
  $\R^4 \cong \E \times \E$.
\end{prop}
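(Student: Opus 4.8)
The plan is to leverage the cocompact $\Z\times\Z$ action on $F$ together with the fact, recorded in Theorem~\ref{thm:isom-embed}, that $p$ isometrically embeds the $1$-skeleton of $F$ into that of $\E\times\E$. As noted in the definition of intrinsically aperiodic flats, we may assume the given $\Z\times\Z$ action on $F$ consists of translations, since passing to a finite-index subgroup affects neither the hypotheses nor the conclusion. A translation of $F$ preserves every parallelism class of lines, so it carries red edges, blue edges, red triangles, blue triangles and squares to cells of the same type; it therefore descends through each of the collapses defining $p_b$ and $p_r$, and the descended maps $\bar\tau_b,\bar\tau_r$ are again \emph{translations} of the two Eisenstein planes. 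Indeed they are orientation-preserving cellular isometries with no fixed point, the latter because a nontrivial translation of $F$ cannot preserve any of the bounded regions of a crumpled $F$ (the pure and striped cases, which by Proposition~\ref{prop:aperiodic-flats} are the only other potentially periodic flats, are easy to handle directly). This is exactly the role played by the colour scheme. We thus obtain a homomorphism $\phi=(\phi_b,\phi_r)\colon \Z\times\Z \longrightarrow \mathrm{Trans}(\E)\times\mathrm{Trans}(\E)=\R^2\times\R^2=\R^4$ for which $p$ is equivariant, the $\Z\times\Z$ action on $\E\times\E$ being translation through $\Lambda:=\phi(\Z\times\Z)$. The homomorphism $\phi$ is injective: a kernel element acts trivially on $\E\times\E$, hence trivially on $p(F)$, hence (as $p$ is injective and the action on $F$ is free) trivially on $F$, hence equals the identity. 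So $\Lambda\cong\Z\times\Z$ is a group of translations of $\R^4$.

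Next I would promote the embedding to a quasi-isometric one and transport the group action. By Theorem~\ref{thm:isom-embed} (together with an easy direct check for the pure and striped flats), $p$ preserves combinatorial $1$-skeleton distances; since the $1$-skeleton metrics on $F$ and on $\E\times\E$ are quasi-isometric to the \pe metrics, which are isometric to $\R^2$ and to $\R^4$, the map $p$ is a $\Z\times\Z$-equivariant quasi-isometric embedding $F\hookrightarrow\R^4$. A quasi-isometric embedding is proper and pulls bounded sets back to bounded sets; combining this with equivariance and with the properness, cocompactness and freeness of the action of $\Z\times\Z$ on $F$ shows that the induced action of $\Z\times\Z$ on $p(F)$ — by Euclidean isometries, through $\Lambda$ — is again proper, cocompact and free. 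Hence $p(F)$ is quasi-isometric to $\Z\times\Z$, and therefore to $\R^2$.

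Finally I would locate the affine plane. Fix a vertex $v_0\in F$ and put $x_0=p(v_0)$. The orbit $x_0+\Lambda$ lies in $p(F)$, is $r$-dense in $p(F)$ by cocompactness, and is a locally finite subset of $\R^4$ by properness; so $\Lambda$ is a discrete subgroup of $(\R^4,+)$, and being isomorphic to $\Z\times\Z$ it is a lattice of rank $2$. Let $V=\mathrm{span}_{\R}(\Lambda)$, a $2$-dimensional linear subspace, and set $P=x_0+V$. Then $x_0+\Lambda$ is a cocompact net in the flat affine plane $P$, so $P\subseteq N_{r'}(x_0+\Lambda)\subseteq N_{r'}(p(F))$, while cocompactness gives $p(F)\subseteq N_r(x_0+\Lambda)\subseteq N_r(P)$. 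Therefore $p(F)$ lies within bounded Hausdorff distance of $P$, and in particular is quasi-isometric to it; as $P$ is a flat affine plane in $\R^4\cong\E\times\E$, this is the assertion. The step I expect to be the main obstacle is the very first one: confirming, via the restricted structure of potentially periodic flats (Proposition~\ref{prop:aperiodic-flats}) and the direction-preserving nature of $p_b$ and $p_r$, that the translation action on $F$ really induces a genuine \emph{translation} action on $\E\times\E$ — so that $\Lambda$ is an honest lattice in $\R^4$ rather than a more complicated abelian group of Euclidean isometries — together with the injectivity of $\phi$. Once the action is pinned down in this form, the remaining assertions are formal consequences of Theorem~\ref{thm:isom-embed} and the \v{S}varc--Milnor lemma.
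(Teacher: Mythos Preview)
Your argument is correct and follows the paper's strategy: the $\Z\times\Z$ translations of $F$ descend through $p_b$ and $p_r$ to translations of $\E\times\E$, the orbit of a vertex is a rank-$2$ lattice, and cocompactness places $p(F)$ within bounded Hausdorff distance of the affine span of that orbit. Two simplifications are worth noting. First, the paper does not invoke Theorem~\ref{thm:isom-embed} at all (and so needs no case split for pure and striped flats): once the orbit $p(u)+\Lambda$ is seen to be cocompact in $p(F)$ and contained in the affine plane $A$, the Hausdorff bound and hence the quasi-isometry follow directly, with the $2$-dimensionality of $A$ deduced a posteriori from the fact that lines and planes are not quasi-isometric. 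Second, your justification that $\bar\tau_b,\bar\tau_r$ are translations via ``orientation-preserving cellular isometry with no fixed point'' is more delicate than necessary (one must also rule out rotations about face or edge centers, which your bounded-preimage argument does handle once extended to all cells); the observation you already recorded---that a translation of $F$ preserves every parallelism class of edges---immediately forces the descended maps to preserve every edge direction in $\E$, and a direction-preserving isometry of $\R^2$ is a translation.
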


\begin{proof} 
  Let $u$ be a vertex of $F$ embedded in $\E \times \E \cong
  \R^{4}$. Because $F$ is potentially periodic there are a pair of
  linearly independent translations from $F$ to itself that generate a
  cocompact action of $\Z\times \Z$ on $F$.  Let $v$ and $w$ be the
  images of $u$ under these two maps.  Since the translations acting
  on $F$ do not change colors or orientations, they also induce
  translations of $\E \times \E$ that preserve the image of $F$.  In
  particular, if we let $A$ be the affine span in $\R^4 = \E \times
  \E$ of the points $p(u)$, $p(v)$ and $p(w)$, then each point of
  $p(F)$ lies within a bounded distance of a point in $A$ (since each
  point $F$ lies within a bounded distance of a point in the orbit of
  $u$ under the $\Z \times \Z$ action).  Thus, by choosing $c$ large
  enough, $p(F)$ lies in a $c$-neighborhood of $A$ which implies that
  $F$ is quasi-isometric to $A$.  Finally, note that $p(u)$, $p(v)$
  and $p(w)$ cannot be collinear since Euclidean planes and lines are
  not quasi-isometric. Thus $A$ is a $2$-dimensional affine plane.
\end{proof}

And finally, in order to understand the behavior of Gersten-Short
geodesic in crumpled triangle-square flats, we need to know that the
direction of the next choke point can be determined locally.

\begin{lem}[Determining the first move]\label{lem:next-move}
  Let $F$ be a colored crumpled triangle-square flat, let $p$ be the
  corresponding embedding of $F$ into $K=\E \times \E$ and let $u$ and
  $v$ be vertices in $F$ with images $u'$ and $v'$ in $K$.  The first
  move of a Gersten-Short geodesic from $u$ to $v$ is completely
  determined by the vertex neighborhood of $u$.  More specifically,
  the edges touching $u$ whose images lie in the parallelopiped
  $K[u',v']$ span the initial cell of the first move.
\end{lem}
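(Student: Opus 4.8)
\textit{Plan.} The statement will be obtained by assembling three results already in hand: the identification $p(F[u,v])=K[u',v']\cap p(F)$ (Corollary~\ref{cor:interval-images}), the description of $\mathrm{lk}(u,F[u,v])$ as a single vertex or a single edge (Proposition~\ref{prop:initial-link}), and the first-edge/first-move dichotomy of Theorem~\ref{thm:first-move}. Throughout we assume $u\neq v$, so that the first move is defined. The first step is to pin down the relevant edges. Let $e$ be an edge of $F$ incident to $u$, with far endpoint $w$. Since $p$ is cellular (Theorem~\ref{thm:flat-embedding}), $p(e)$ is an edge of $\E\times\E$ incident to $u'=p(u)$; and since $K[u',v']$ is the full subcomplex of $\E\times\E$ on the vertices between $u'$ and $v'$ (a parallelepiped with $u'$ and $v'$ as opposite corners, by the proposition on intervals in $\E\times\E$), we have $p(e)\subseteq K[u',v']$ iff $p(w)\in K[u',v']$, which by Corollary~\ref{cor:interval-images} and the injectivity of $p$ is equivalent to $w\in F[u,v]$, which in turn (as $F[u,v]$ is the full subcomplex of $F$ on the vertices between $u$ and $v$, and $u$ always lies between $u$ and $v$) is equivalent to $e\subseteq F[u,v]$. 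Hence the set $S_u$ of edges at $u$ whose images lie in $K[u',v']$ is exactly the set of edges of $F[u,v]$ incident to $u$, i.e. the edges of $F$ realizing the vertices of $\mathrm{lk}(u,F[u,v])$.

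By Proposition~\ref{prop:initial-link} this link is a single vertex or a single edge, so $S_u$ consists of either one edge or two edges meeting at $u$. If $S_u$ is a single edge, Theorem~\ref{thm:first-move} says every geodesic from $u$ to $v$ begins along it; this edge is then the initial cell of the first move and is spanned by $S_u$. If $S_u$ consists of two edges meeting at $u$, Theorem~\ref{thm:first-move} supplies a unique move in $K[u,v]$ containing both of them, and its initial cell is the $2$-cell at $u$ having those two edges on its boundary. This cell is well defined because a triangle-square flat is a genuine planar tiling, in which a pair of edges meeting at a common vertex lies in at most one $2$-cell, and it is spanned by $S_u$. In either case the initial cell of the first move is precisely the cell of $F$ determined by $S_u$, which is the ``more specifically'' assertion.

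It remains to see that this datum is local at $u$. By the two previous paragraphs the first move is a function of $S_u$, and to compute $S_u$ one need only decide, for each of the boundedly many edges $e$ of $F$ at $u$, whether the unit edge $p(e)$ of $\E\times\E$ emanating from the corner $u'$ of the parallelepiped $K[u',v']$ points into that parallelepiped. Since $K[u',v']$ is a parallelepiped with $u'$ and $v'$ as opposite corners, $p(e)\subseteq K[u',v']$ holds precisely when $p(e)$ is one of the (at most two) parallelepiped edges at $u'$ pointing towards $v'$, which is a condition on the direction of $p(e)$ relative to the direction from $u'$ toward $v'$. Thus $S_u$, and with it the first move, is determined by the combinatorial neighborhood of $u$ in $F$ together with the coarse direction toward $v$ — which is the sense in which it is ``determined by the vertex neighborhood of $u$.''

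\textit{Main obstacle.} The step requiring the most care is the chain of equivalences in the first paragraph: one must be sure that $K[u',v']$ contains an edge of $\E\times\E$ as soon as it contains both its endpoints (clear once one notes that it is a full subcomplex of $\E\times\E$, a product of two convex full-subcomplex intervals in $\E$) and that Corollary~\ref{cor:interval-images} genuinely rules out a vertex of $K[u',v']\cap p(F)$ having a preimage outside $F[u,v]$. Everything else is an assembly of Proposition~\ref{prop:initial-link}, Theorem~\ref{thm:first-move}, and the elementary fact that in a planar tiling a pair of edges at a common vertex determines at most one $2$-cell.
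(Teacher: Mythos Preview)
Your proposal is correct and follows essentially the same route as the paper: use Corollary~\ref{cor:interval-images} to identify the edges at $u$ lying in $F[u,v]$ with those whose images land in $K[u',v']$, and then invoke Theorem~\ref{thm:first-move} (and, in your write-up, Proposition~\ref{prop:initial-link}) to conclude that these edges span the initial cell of the first move. One small slip in your locality paragraph: the parallelepiped $K[u',v']$ is (generically) $4$-dimensional, a product of two parallelograms in $\E$, so there are up to \emph{four} parallelepiped edges at $u'$, not two; this does not affect your argument, since the relevant point is only that membership of $p(e)$ in $K[u',v']$ is determined by the direction of $p(e)$.
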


\begin{proof}
  By Theorem~\ref{thm:first-move} the first move is determined by its
  first cell which in turn is determined by the edges leaving $u$ that
  extend to geodesics connecting $u$ to $v$ and such edges are
  characterized by whether or not their images lie in $K[u',v']$.
  More specifically, if an edge leaving $u$ is the first edge of a
  geodesic from $u$ to $v$ then by Corollary~\ref{cor:interval-images}
  its image lies in $K[u',v']$.  Conversely, any edge leaving $u$
  whose image lies in $K[u',v']$ is contained in $K[u',v'] \cap p(F)$,
  by Corollary~\ref{cor:interval-images} it is contained in
  $p(F[u,v])$, and thus it extends to a geodesic from $u$ to $v$.
\end{proof}

As a consequence of local determination of the next move, we can show
that Gersten-Short geodesics in a flat do not cross in the following
sense.

\begin{cor}[Noncrossing geodesics]\label{cor:noncrossing} 
  Let $F$ be a colored crumpled triangle-square flat and let $p$ be
  the corresponding embedding of $F$ into $K=\E \times \E$. If $u$ and
  $v$ are vertices in $F$ with images $u'$ and $v'$ in $K$, then for
  every pair of vertices $x$ and $y$ within $1$ unit of $u$ and $v$ in
  $F$, the next choke point in the Gersten-Short geodesic from $x$ to
  $y$ is locally determined by the neighborhood of $x$ and the
  straight line segment from $x$ to this next choke point in $F$ does
  not cross the straight line segment from $u$ to its next choke
  point.
\end{cor}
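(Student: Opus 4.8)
The plan is to combine Lemma~\ref{lem:next-move} with the quasi-Euclidean picture of the embedded flat to reduce the non-crossing assertion to an elementary planar fact about parallelepiped intervals in $\E \times \E$. First I would establish the local-determination half of the statement: since $x$ and $y$ lie within $1$ unit of $u$ and $v$ respectively, and $F$ is crumpled, Lemma~\ref{lem:next-move} applies verbatim with $u,v$ replaced by $x,y$ --- the first cell of the first move of the Gersten-Short geodesic from $x$ to $y$ is spanned precisely by the edges leaving $x$ whose images land in the parallelepiped $K[x',y']$, where $x',y'$ are the images of $x,y$ under $p$. So the next choke point of the $x$-to-$y$ geodesic depends only on the vertex neighborhood of $x$ together with the combinatorial ``direction'' toward $y'$, i.e. on which of the (at most six) edge-directions at $x$ point into $K[x',y']$.

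Next I would set up the non-crossing claim inside the product complex. By Corollary~\ref{cor:interval-images} the relevant segments live in $p(F)$, and by Theorem~\ref{thm:isom-embed} the embedding $p$ preserves the $1$-skeleton metric, so it suffices to argue non-crossing for the images in $K=\E\times\E$. The image of the straight segment from $u$ to its next choke point, call it $u_1$, is a segment inside the $4$-dimensional parallelepiped $K[u',v']$ emanating from the corner $u'$ along one of its edge-directions or along a face-diagonal of a single triangle/square cell; likewise for the segment from $x$ to its next choke point $x_1$ inside $K[x',y']$. The key point is that $d(u',x') \le 1$ and $d(v',y') \le 1$, so the two parallelepipeds $K[u',v']$ and $K[x',y']$ are ``nearly parallel'': each is a product of two parallelogram intervals in $\E$, and the defining inequalities $d(u',w)+d(w,v')=d(u',v')$ differ from $d(x',w)+d(w,y')=d(x',y')$ only by the bounded perturbation of the endpoints. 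I would make this precise by noting that ``between $u'$ and $v'$'' in a product $\E\times\E$ means coordinatewise between, and in each $\E$-factor the interval is a parallelogram whose two edge-directions are fixed (they are the two of the three triangle-directions of $\E$ that point from $u'$ toward $v'$). Since $x'$ and $v'$, $y'$ differ from $u',v'$ by at most one edge, the pair of edge-directions selected at $x$ is the same as at $u$ unless $x$ is essentially on a ``boundary'' between two regimes --- and in that borderline situation the two first-move segments share the corner region rather than cross.

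The main obstacle --- and the step I would spend the most care on --- is precisely ruling out a genuine transverse crossing in that borderline case, where the direction toward $y'$ from $x'$ differs from the direction toward $v'$ from $u'$. Here I would use the coloring/projection structure: a crossing of the two straight segments in $F$ would force, after projecting by $p_b$ or $p_r$ to $\E$, a crossing of the corresponding segments in an Eisenstein plane, and in $\E$ the Gersten-Short first-move segments out of two vertices at bounded distance are controlled by Proposition~\ref{prop:ft-pure}'s picture (the ``slope $\pm$ one direction then a fixed direction'' description), which visibly cannot produce a transverse crossing when the endpoints are within $1$ of each other. Pulling this back through the isometric embedding $p$ (Theorem~\ref{thm:isom-embed}, Corollary~\ref{cor:interval-images}) shows the $F$-segments cannot cross either. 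I expect the only real work is the careful bookkeeping of which of the six edge-directions at $x$ versus at $u$ are ``active'', i.e. point into the respective parallelepiped intervals, and checking that in every one of the finitely many vertex-neighborhood types of Figure~\ref{fig:vtypes} the active directions at $x$ and at $u$ are either equal or nested in a way that keeps the two initial segments on the same side. Once that finite case analysis is done, the corollary follows.
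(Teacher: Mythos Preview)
Your local-determination paragraph is fine and matches the paper. For the non-crossing half, however, your route is considerably more elaborate than the paper's, and the projection step has a real gap.

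The paper's argument is short and does not use the projections $p_b$, $p_r$ at all beyond what is already encoded in Lemma~\ref{lem:next-move}. It first observes that since $K[u',v']$ and $K[x',y']$ are nearly identical parallelepipeds, one may reduce to the case $v=y$. Then it makes a single combinatorial observation: the straight segment from $u$ to its first choke point crosses exactly the \emph{horizontal} edges of the first move, i.e.\ the edges whose two endpoints are equidistant from $v$, and no other edge of that move is horizontal. Since $v=y$, the segment from $x$ to its first choke point crosses only edges with the same property. Hence neither segment can enter the interior of the other's move, and they cannot cross. No case analysis on vertex types and no appeal to the pure-$\E$ picture is needed.

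By contrast, your plan hinges on the assertion that a transverse crossing of the two segments in $F$ would force, after applying $p_b$ or $p_r$, a transverse crossing in $\E$. This is not automatic: the projections are highly non-injective (they collapse entire triangular regions and square strips), so two segments that genuinely cross in $F$ can have images in a single $\E$-factor that are collinear, overlap, or merely touch rather than cross. You would need a separate argument that at least one of the two projections detects the crossing, and that argument is not supplied. Even if you patched this, you would then need to know that the first-move segment of a Gersten--Short geodesic in $F$ projects to something controlled by the first-move segment of a Gersten--Short geodesic in $\E$, which is also not established anywhere in the paper. The finite case analysis you propose might eventually close these gaps, but the paper's ``horizontal edge'' observation bypasses all of it in two lines.
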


\begin{proof}
  Since the parallelepipeds $K[u',v']$ and $K[x',y']$ are nearly
  identical, except in situations where one of them is degenerate, we
  can assume by Lemma~\ref{lem:next-move} that $v=y$.  Next, note that
  the edges crossed by the straight line segment from $u$ to its first
  choke point are edges with both endpoints the same distance from $v$
  and also note that no other edges in the move have this property.
  Because $v=y$ the edges crossed by the straight line segment from
  $x$ to its next choke point have the same property.  In particular,
  neither straight line segment can cross into the interior of the
  move containing the other.  The argument in the exceptional case is
  essentially the same.
\end{proof}

Combining Lemma~\ref{lem:next-move} and
Corollary~\ref{cor:noncrossing}, we now show that image of a
Gersten-Short geodesic in $\E \times \E$ is approximately linear until
the situation degenerates.

\begin{lem}[Roughly linear]\label{lem:quasi-linear} 
  Let $F$ be a colored potentially periodic flat, let $p$ be the
  corresponding embedding of $F$ into $K=\E \times \E$, and let $u$
  and $v$ be vertices in $F$ with images $u'$ and $v'$ in $K$.  There
  exists a direction vector and a constant depending only on the
  structure of $F$ and the directions of the sides of the
  parallelepiped $K[u',v']$ so that image under $p$ of any
  Gersten-Short geodesic from $u$ to $v$ stays with this constant of
  the line through $u'$ with this direction until that point $w'$ at
  which the interval $K[w',v']$ degenerates into a lower dimensional
  parallelepiped.
\end{lem}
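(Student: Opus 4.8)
The plan is to transfer the problem to the Eisenstein plane $\E$ via the two projection maps and then bootstrap off the known behavior of Gersten-Short geodesics in pure flats. By Proposition~\ref{prop:aperiodic-flats} a potentially periodic flat $F$ is pure, striped, or thoroughly crumpled; the pure and striped cases are already handled by Propositions~\ref{prop:ft-pure} and~\ref{prop:ft-striped}, so I would assume $F$ is thoroughly crumpled and in particular crumpled, so that the isometric embedding $p:F\to K=\E\times\E$ of Theorem~\ref{thm:isom-embed} applies and both projections $p_b,p_r$ are onto. First I would fix $u$, $v$ and their images $u',v'$ and observe, via Corollary~\ref{cor:interval-images}, that the image $p(F[u,v])$ is exactly $K[u',v']\cap p(F)$, where $K[u',v']$ is the product of two (possibly degenerate) parallelograms $\E[p_b(u),p_b(v)]\times\E[p_r(u),p_r(v)]$. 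The key point I want to extract is that, as long as both of these parallelograms are genuinely two-dimensional, the edges leaving $u$ that extend to geodesics toward $v$ are exactly those whose images lie in $K[u',v']$ (Lemma~\ref{lem:next-move}), and this condition depends only on which ``octant'' $v$ lies in relative to $u$ — i.e. only on the parallelism classes of the four edge-directions of the parallelepiped.

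Next I would run the inductive definition of the Gersten-Short geodesic (Definition~\ref{def:gs-geos}) and track the choke points $u=u_0,u_1,u_2,\dots$ By Lemma~\ref{lem:next-move} the first move, hence the choke point $u_1$, is determined by the vertex neighborhood of $u$ together with the directions of the sides of $K[u',v']$; moreover Corollary~\ref{cor:noncrossing} says that this choice is stable under moving the endpoints by $1$ unit and that the straight segments $\overline{u u_1}$ for nearby starting points do not cross. The crucial structural claim is that, as long as $K[u_i',v']$ remains a full-dimensional parallelepiped with the \emph{same} four side-directions as $K[u',v']$, the displacement $u_{i+1}-u_i$ (projected into $\R^4$) takes one of boundedly many values, all of which have a common ``drift direction'' $d$ determined by those four side-directions together with the cocompact $\Z\times\Z$ structure of $F$. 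I would prove this by noting that each choke-point step crosses a move whose initial cell is locally determined; since $F$ is thoroughly crumpled there is a uniform bound $N$ on the number of $2$-cells in a region, hence a uniform bound on the combinatorial length of any single triangle-square-triangle move, hence on $\|u_{i+1}-u_i\|$; and since the move-type is determined by the octant, consecutive steps advance both coordinates $p_b, p_r$ monotonically toward $p_b(v), p_r(v)$ respectively. Averaging these bounded monotone steps against the self-similar structure of $p(F)$ — which by Proposition~\ref{prop:quasi-flat} lies within a bounded neighborhood of a genuine $2$-plane $A\subset\R^4$ — pins the cumulative displacement $u_i'-u'$ to within a bounded distance of the ray from $u'$ in a fixed direction $d$, where $d$ and the bounding constant depend only on the structure of $F$ and on the four side-directions of $K[u',v']$.

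Finally I would identify the degeneration point: let $w'$ be the first choke-point image at which $K[w',v']$ drops dimension, i.e. one of the two parallelograms $\E[p_b(w),p_b(v)]$, $\E[p_r(w),p_r(v)]$ collapses to a segment or point. Up to that moment the octant — and hence the drift direction $d$ — has not changed, so the argument above gives a single constant $C=C(F,K[u',v'])$ with $\mathrm{dist}(u_i', u'+\Z_{\ge0}d)\le C$ for every choke point $u_i'$ preceding $w'$; since every vertex of the Gersten-Short geodesic lies within one (bounded) move of some choke point, the whole initial segment of the image stays within $C+N$ of the line through $u'$ in direction $d$, and absorbing $N$ into the constant finishes the proof. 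The main obstacle I anticipate is the middle step: making precise the claim that the drift direction is \emph{constant} until degeneration, because a priori the bounded local steps could wobble; the resolution is that the octant (the sign data of $v'-u'$ relative to the four side-directions) is a discrete invariant that cannot change while $K[u_i',v']$ stays full-dimensional, and Lemma~\ref{lem:next-move} says the move-type — hence the step — is a function of that invariant alone, so ``wobble'' is confined to a bounded window and averages out against the quasi-flat structure from Proposition~\ref{prop:quasi-flat}.
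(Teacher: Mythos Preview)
Your setup is right and you have correctly identified most of the ingredients, but the central step has a genuine gap. You claim at the end that ``Lemma~\ref{lem:next-move} says the move-type --- hence the step --- is a function of that invariant [the octant] alone.'' It does not. Lemma~\ref{lem:next-move} says the first move is determined by the vertex \emph{neighborhood} of $u$ together with the side-directions of $K[u',v']$. Different vertices of $F$ have genuinely different neighborhoods (interior to a triangular region, interior to a square region, along a side, at a corner, with varying adjacent region sizes), so the displacement $u_{i+1}'-u_i'$ is not constant along the choke-point sequence even while the octant is fixed. Your ``averaging'' argument --- bounded steps, monotone in both projections, path confined near the $2$-plane $A$ --- therefore only confines the path to a sector of $A$, not to a neighborhood of a single ray. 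Bounded monotone lattice walks in a plane can drift anywhere in a quadrant; nothing you have written rules that out, and the quasi-flat comparison of Proposition~\ref{prop:quasi-flat} does not help because it only controls where $p(F)$ sits, not which direction inside $p(F)$ the geodesic picks.

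The idea you are missing, and the one the paper uses, is a pigeonhole/periodicity argument rather than an averaging argument. Because the $\Z\times\Z$ action on $F$ is cocompact, there are only finitely many vertex orbits; hence after boundedly many choke-point steps (with the octant held fixed) two choke points $u_i$ and $u_j$ land in the same orbit. Since the next move depends only on the vertex neighborhood and the octant, and both now repeat, the entire subsequent sequence of moves repeats with period $j-i$. The period vector $u_j'-u_i'$ is your direction $d$, and the constant is governed by the diameter of one period. Uniformity of $d$ across nearby starting points then follows, as you anticipated, from the noncrossing statement in Corollary~\ref{cor:noncrossing} by sandwiching. So keep your framework, but replace the averaging paragraph with this orbit-recurrence argument.
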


\begin{proof} 
  The basic idea goes as follows.  Pick $u$ and $v$ so that $K[u',v']$
  is far from degenerate (i.e. all four side lengths are large).  By
  Lemma~\ref{lem:next-move} the first choke point only depends on the
  neighborhood of $u$ and the directions of the sides of $K[u',v']$.
  Repeat.  If the initial side lengths were large enough, then we pass
  through two vertices that lies in the same $\Z\times \Z$ orbit
  before the remaining interval is able to degenerate.  The vector
  connecting these two vertices is the vector mentioned in the
  theorem.  By Lemma~\ref{lem:next-move} again, the same sequence of
  moves is now repeated until that point at which the interval
  remaining degenerates.  This shows that this portion of the
  Gersten-Short geodesic approximates a line in $K$ to within a
  specified constant.  Finally, if we pick any other vertex $x$ close
  to $u$ and repeat the argument above, we find that the Gersten-Short
  geodesic from $x$ also has an image in $K$ that approximates a
  straight line.  Because Gersten-Short geodesics in $F$ do not cross
  (Corollary~\ref{cor:noncrossing}) the direction of this line must
  agree with the previous direction (since it can be bounded on both
  sides by lines originating in $u$ and other nearby vertices in the
  same $\Z \times \Z$ orbit as $u$.
\end{proof}

Combining these results produces our final main result.

\begin{thm}[Fellow traveling in periodic flats]\label{thm:ft-periodic}
  If $F$ is a potentially periodic triangle-square flat, then there is
  a value $k$ such that pairs of Gersten-Short geodesics in $F$ that
  start and end within $1$ unit synchronously $k$-fellow travel.
\end{thm}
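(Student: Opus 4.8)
The plan is to peel off the pure and striped cases, which are already settled, and then handle the thoroughly crumpled case by combining the embedding into $\E\times\E$ with the ``roughly linear'' behavior of Gersten--Short geodesics. By Proposition~\ref{prop:aperiodic-flats} a potentially periodic flat $F$ is pure, striped, or thoroughly crumpled, and the first two possibilities are exactly Propositions~\ref{prop:ft-pure} and~\ref{prop:ft-striped}, so I may assume $F$ is thoroughly crumpled, and in particular crumpled. After choosing a coloring of $F$, fix the corresponding embedding $p\colon F\to K=\E\times\E$ of Theorem~\ref{thm:flat-embedding}, which preserves the $1$-skeleton metric by Theorem~\ref{thm:isom-embed}. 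By Corollary~\ref{cor:paths-fellow-travel} the Gersten--Short geodesics between a fixed pair of endpoints pairwise $1$-fellow travel, so it suffices to fix one Gersten--Short geodesic $\alpha$ from $u$ to $v$ and one Gersten--Short geodesic $\beta$ from $x$ to $y$ with $d(u,x)\le 1$ and $d(v,y)\le 1$ and, by Proposition~\ref{prop:ft-geo}, to bound their subspace distance; equivalently, to bound the subspace distance of the images $p\alpha$ and $p\beta$ in $K$, where the intervals $K[u',v']$ and $K[x',y']$ are parallelepipeds whose edge directions come from a fixed finite list and whose side lengths differ by at most a universal constant (since $d(u',x'),d(v',y')\le 1$).

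I would then split into cases according to $d=\dim K[u',v']$. When $d\le 3$ the interval $F[u,v]=p^{-1}(K[u',v']\cap p(F))$ (Corollary~\ref{cor:interval-images}) is very restricted: at least one of the projections $p_b(F[u,v])$, $p_r(F[u,v])$ is contained in a single geodesic segment, which by Proposition~\ref{prop:identifications} forces $F[u,v]$ to lie inside a triangular region of $F$ (when that segment is a point) or inside a linearly arranged chain of triangular regions and finite square strips. In either situation $F[u,v]$ has the structure of an interval in a pure triangle flat or in a striped flat, and the arguments proving Propositions~\ref{prop:ft-pure} and~\ref{prop:ft-striped} carry over, the only modification being that they apply just as well to Gersten--Short geodesics whose endpoints are within a bounded distance $\ell$ rather than $1$, with the constant degrading linearly in $\ell$. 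Confirming these structural containments from Proposition~\ref{prop:identifications} is routine bookkeeping.

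When $d=4$ the interval is genuinely four-dimensional and no longer pure or striped in character, so I would invoke Lemma~\ref{lem:quasi-linear}: there is a direction vector and a constant $c$, depending only on the structure of $F$ and on the finitely many possible side directions of these parallelepipeds (hence uniform over all such intervals in $F$), so that $p\alpha$ stays within $c$ of the line through $u'$ in that direction until the first choke point $w'$ at which $K[w',v']$ degenerates to dimension $\le 3$, and likewise $p\beta$ stays within $c$ of the line through $x'$ until a choke point $w''$ where $K[w'',y']$ degenerates. By Lemma~\ref{lem:next-move} the direction vector is the same for $\beta$ (it depends only on $F$ and on the common side directions), and by Corollary~\ref{cor:noncrossing} the Gersten--Short geodesics issuing from the nearby vertices $u$ and $x$ do not cross; together with $d(u',x')\le 1$ this puts the two approximating lines within distance $1$, so the initial stretches of $p\alpha$ and $p\beta$---which run to degeneration points $w',w''$ that are boundedly apart because $K[u',v']$ and $K[x',y']$ are nearly identical---have subspace distance at most $2c+O(1)$. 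Pulling back through $p$, the initial stretches of $\alpha$ and $\beta$ synchronously fellow travel with a universal constant, their terminal choke points $w,w''\in F$ lie within a universal constant $C_1$, and their tails $\alpha|_{[w,v]}$ and $\beta|_{[w'',y]}$ are again Gersten--Short geodesics, now between endpoints with $d(w,w'')\le C_1$, $d(v,y)\le 1$, and $\dim K[w',v']\le 3$; these are handled by the $d\le 3$ case with start-offset $C_1$. Concatenating the bounds for the two stretches, and adjusting for the bounded length discrepancy between $\alpha$ and $\beta$, yields the desired synchronous fellow-traveling constant $k=k(F)$.

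I expect the delicate step to be the passage through the degeneration point in the $d=4$ case: one must verify that $\alpha$ and $\beta$ enter \emph{the same} lower-dimensional configuration at points that are only boundedly apart, so that the $d\le 3$ analysis can be applied to the two tails simultaneously, and one must track the various universal constants (the $c$ of Lemma~\ref{lem:quasi-linear}, the offset $C_1$, the length discrepancy) so that the final $k$ depends only on $F$. Both issues are controlled by the near-coincidence of the parallelepipeds $K[u',v']$ and $K[x',y']$---guaranteed by $d(u,x),d(v,y)\le 1$ and preserved because $v$ and $y$ stay fixed down the recursion---but making this precise, especially near the degenerate situations that Corollary~\ref{cor:noncrossing} explicitly sets aside, is the technical core of the proof.
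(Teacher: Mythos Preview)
Your setup and the handling of the nondegenerate stretch match the paper almost exactly: reduce to the thoroughly crumpled case via Proposition~\ref{prop:aperiodic-flats}, embed into $K=\E\times\E$, and invoke Lemma~\ref{lem:quasi-linear} to get a uniform linear approximation until the parallelepiped $K[\,\cdot\,,v']$ first degenerates. Where you diverge from the paper is in the treatment of the tail. The paper never attempts your $d\le 3$ structural reduction to pure or striped intervals; instead it invokes Proposition~\ref{prop:quasi-flat} to replace $p(F)$ by an affine $2$-plane $A\subset\R^4$, and then observes that once $K[w',v']$ has dropped dimension, the tail of $p\alpha$ (and of $p\beta$) is trapped near $A\cap\operatorname{aff}(K[w',v'])$, which is at most one-dimensional because both projections $p_b,p_r$ are onto (Corollary~\ref{cor:project-onto}). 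This gives the second linear direction in one stroke, with no case analysis and no need to identify the degenerate interval with anything striped.

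Your alternative route is reasonable and probably works, but the step you label ``routine bookkeeping'' is doing real work. When $d=3$, say with $p_b(F[u,v])$ a segment, the preimage $p_b^{-1}(\text{segment})$ is indeed an alternating chain of bounded red triangular regions and finite square strips, and $F[u,v]$ sits inside it; but the red regions are arbitrary bounded convex polygons in $F$, not infinite strips, so the literal hypotheses of Proposition~\ref{prop:ft-striped} are not met and one must re-run its argument with these bounded pieces, checking that the parallelism of the choke-point segments survives. The $d=2$ case is cleaner (no triangles of either color survive, so the interval is pure square), and $d\le 1$ is trivial. None of this looks obstructed, but it is more than bookkeeping, and the paper's use of the quasi-flat plane $A$ is precisely what lets it skip this analysis. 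If you want to keep your approach, the honest thing is to promote the $d=3$ reduction to a lemma and prove it; if you are willing to cite Proposition~\ref{prop:quasi-flat}, you can replace the entire $d\le 3$ discussion with the two-sentence affine-intersection argument the paper gives.
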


\begin{proof} 
  Since this result has already been established from pure and striped
  flats (Propositions~\ref{prop:ft-pure} and~\ref{prop:ft-striped}),
  we only need to consider flats that are thoroughly crumpled
  (Proposition~\ref{prop:aperiodic-flats}).  So assume that $F$ is a
  potentially periodic thoroughly crumpled triangle-square flat that
  has been colored with corresponding embedding $p$ into $K = \E
  \times \E$.  Let $\alpha$ and $\beta$ are Gersten-Short geodesics in
  $F$ that start and end within $1$ unit of each other.  We show that
  they synchronously $k$-fellow travel by showing that they both
  approximate a pair line segments in the same two directions.  By
  Lemma~\ref{lem:quasi-linear}, so long as the corresponding intervals
  in $K$ are not degenerate parallelopipeds, the images of $\alpha$
  and $\beta$ approximate straight lines in the same direction.
  Moreover, once the intervals do degenerate, the remaining portion
  must approximate the intersection of the affine plane $A$
  approximating $p(F)$ (Proposition~\ref{prop:quasi-flat}) and the
  affine space containing the degenerate parallelopiped.  Because the
  projection maps from thoroughly crumpled planes are onto
  (Corollary~\ref{cor:project-onto}), the intersection of these two
  affine spaces is at most one-dimensional.  Combining the constant
  governing how closely Gersten-Short geodesics travel along a
  straight line in the non-degenerate portion with the constant
  governing how closely $p(F)$ approximates $A$, yields a global
  constant $k$ that is, by construction, independent of $\alpha$ and
  $\beta$ and only dependent on the periodic structure of $F$ itself.
\end{proof}

%%%%%%%%%%%%%%%%%%%%%%%%%%%%%%%%%%%%%%%%%%%%%%%%%%%%%%%%%
\section{Establishing biautomaticity}\label{sec:biaut}
%%%%%%%%%%%%%%%%%%%%%%%%%%%%%%%%%%%%%%%%%%%%%%%%%%%%%%%%%

In this final section we outline a sequence of steps which, if carried
out, would fully establish that a compact nonpositively curved
triangle-square complex $K$ has a biautomatic fundamental group
(Conjecture~\ref{conj:biaut}).  According to Theorem~\ref{thm:biaut},
in order to establish the biautomaticity of $\pi_1(K)$, one needs to
find a collection of paths and then to prove that they fellow travel
and form a regular language.  We firmly believe that the Gersten-Short
geodesics defined have satisfy these conditions.  Next, since
triangle-square complexes with no immersed flats are already known to
be biautomatic (Theorem~\ref{thm:flats-biaut}), it is natural to focus
attention on the flats that do exist.  A good first step would be to
establish Conjecture~\ref{conj:aperiodic} in order to rule out the
existence of intrinsically aperiodic flat planes focusing attention on
those potentially periodic flats that immerse into $K$.  For each
individual potentially periodic flat plane we have shown that there is
a value $k$, depending on the flat, so the Gersten-Short geodesics
contained in the flat that start and end within $1$ unit of each other
synchronously $k$-fellow travel (Theorem~\ref{thm:ft-periodic}).  The
natural next step would be to prove the following:

\begin{conj}[Thoroughly crumpled planes]
  For any compact nonpositively curved triangle-square complex $K$
  there are only finitely many distinct thoroughly crumpled planes
  that immerse into $K$. As a consequence, there is a global value $k$
  depending only on $K$ so that Gersten-Short paths in any flat inside
  $\widetilde K$ that start and end within $1$ unit of each other
  synchronously $k$-fellow travel.
\end{conj}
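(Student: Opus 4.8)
The plan is to derive the promised global constant $k$ from the finiteness assertion together with the results already in hand, and then to spend the real effort on the finiteness statement itself. \emph{Reducing the consequence to finiteness.} Every flat plane inside $\widetilde K$ composes with the covering projection to an immersed flat in $K$, and its Gersten--Short paths coincide with the intrinsic Gersten--Short paths of the flat (intervals of $\widetilde K$ between vertices of the flat restrict to intervals of the flat, as in Corollary~\ref{cor:interval-images}). Granting Conjecture~\ref{conj:aperiodic}, no intrinsically aperiodic flat occurs, so by Proposition~\ref{prop:aperiodic-flats} every flat in $\widetilde K$ is pure, striped, or thoroughly crumpled. Proposition~\ref{prop:ft-pure} gives a single constant valid for all pure flats, Proposition~\ref{prop:ft-striped} one valid for all striped flats, and Theorem~\ref{thm:ft-periodic} a constant for each of the finitely many thoroughly crumpled flats $F_1,\dots,F_m$ that immerse into $K$; the maximum of these finitely many constants is the desired $k$, which then feeds into the criterion of Theorem~\ref{thm:group}.

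\emph{Bounding region size.} The first genuine step is a bound $N=N(K)$ on the number of $2$-cells in any region of any thoroughly crumpled flat immersing into $K$. Convexity of regions shows that a straight side of a triangular region of length $\ell$ is adjacent, along its whole length, to a single square region, which therefore contains a $1\times\ell$ row of squares; hence a region with many cells forces an arbitrarily long row of a fixed cell type of $K$. The hope is that a K\"onig's lemma argument, followed by an appeal to Lemma~\ref{lem:striped}, upgrades arbitrarily long such rows occurring in a family of thoroughly crumpled flats into a bi-infinite row in a limiting flat, forcing that flat to be \emph{striped}, so that no family of thoroughly crumpled flats immersing into $K$ can have unbounded region size; making the passage from ``arbitrarily long finite strip'' to ``the flat is actually striped'' precise is already delicate. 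Once $N$ is available there are only finitely many combinatorial types of region (each a convex lattice polygon with at most six sides of length $\le N$, labelled by cells of $K$) and only finitely many ways adjacent regions can be glued, so the region decomposition of $F$ is governed by a subshift of finite type over a bounded alphabet.

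\emph{From bounded local structure to finitely many flats.} A subshift of finite type admits aperiodic configurations in general, so the final step must use that $F$ genuinely embeds in $\widetilde K$, which carries a cocompact $\pi_1(K)$-action. The intended route is via the embedding $p\colon F\to\E\times\E$ of Theorems~\ref{thm:flat-embedding} and~\ref{thm:isom-embed}: the image $p(F)$ lies within a neighbourhood of bounded (in terms of $N$) radius of the affine $2$-plane it spans in $\R^4\cong\E\times\E$, so it suffices to bound in terms of $K$ the slope of that plane, after which $p(F)$, and hence $F$, has a fundamental period bounded in terms of $K$ and only finitely many $F$ remain. To bound the slope one would use that the geometric $\pi_1(K)$-action on $\widetilde K$ leaves only finitely many isometry types of bounded patch available, so two group translates of $F$ must coincide on a patch large enough (in terms of $N$) to exhibit a translational symmetry, producing a $\Z\times\Z$ in the stabiliser of $F$ with a fundamental domain bounded in terms of $K$. \textbf{The main obstacle} is exactly this last inference: extracting an honest symmetry of the flat from agreement on a large patch requires a rigidity enjoyed by thoroughly crumpled flats but not by arbitrary flats in a $\cat(0)$ complex, and turning the plausible ``aperiodic order cannot occur here'' heuristic into a proof — most likely through an ultralimit or asymptotic-cone argument built on the geometric $\pi_1(K)$-action, in the circle of ideas around the flat torus and flat closing problems — is where the difficulty of the conjecture is concentrated. (The entire program also presupposes Conjecture~\ref{conj:aperiodic}, whose resolution is a prerequisite rather than a consequence.)
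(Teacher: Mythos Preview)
The statement you are attempting to prove is a \emph{conjecture} in the paper, not a theorem: the paper offers no proof, only the surrounding discussion in Section~\ref{sec:biaut} listing it as one of the steps that would need to be completed to establish Conjecture~\ref{conj:biaut}. There is therefore no proof in the paper to compare your proposal against.

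Your proposal is, appropriately, not a proof either but a program, and you are candid about where the real difficulties lie. The reduction of the ``consequence'' clause to the finiteness clause is essentially correct (take the maximum of the constants from Propositions~\ref{prop:ft-pure}, \ref{prop:ft-striped}, and Theorem~\ref{thm:ft-periodic} over the finite list), though your appeal to Corollary~\ref{cor:interval-images} is misplaced: that corollary concerns the embedding $p:F\to\E\times\E$, not the inclusion $F\hookrightarrow\widetilde K$, and the claim that intervals in $\widetilde K$ between vertices of $F$ restrict to intervals of $F$ requires a separate argument (convexity of $F$ in $\widetilde K$). Your identification of the two genuine gaps---extracting a bound on region size via a compactness/limit argument, and then promoting bounded local structure to an honest $\Z\times\Z$ symmetry with bounded fundamental domain---matches where the paper itself stops; these are open problems, and your proposal does not close them.
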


Once this point has been reached, one should use the fellow traveling
constants within planes and the $\cat(0)$ nature of $\widetilde K$
overall to prove that all Gersten-Short paths in $\widetilde K$
synchronously $k$-fellow travel for some uniform global $k$.  Since
the constants in pure square, pure triangle and striped flats are
small, and periodic crumpled flats have bounded intersection with
other flats, this seems reasonable.  The necessary arguments might be
similar to those used by Chris Hruska in \cite{Hr05}.  Finally, once
it is known that these paths fellow travel in $\widetilde K$, the
falsification by fellow traveler property \cite{NeSh95} should be
invoked to establish the regularity of the language they describe.

\bibliographystyle{plain}

\end{document}